\numberwithin{equation}{section}
\newtheorem{theorem}{Theorem}[section]
\newtheorem{corollary}[theorem]{Corollary}
\newtheorem{definition}[theorem]{Definition}
\newtheorem{lemma}[theorem]{Lemma}
\newtheorem{notation}[theorem]{Notation}
\newtheorem{remark}[theorem]{Remark}
\newenvironment{proof}[1][Proof]{\noindent\textbf{#1.} }{\ \rule{0.5em}{0.5em}}
\begin{document}

\title{Interior $HW^{1,p}$ estimates for divergence degenerate elliptic systems in
Carnot groups\thanks{This work was supported by the National Natural Science
Foundation of China (Grant Nos. 10871157 and 11001221), specialized Research
Found for the Doctoral Program of Higher Education (No. 200806990032) and
Northwestern Polytechnical University jichu yanjiu jijin tansuo xiangmu (No.
JC 201124). \textbf{2000 AMS\ Classification}: Primary 35H05. Secondary:
35J45, 35B65. \textbf{Keywords}: H\"{o}rmander's vector fields, subelliptic
systems, $L^{p}$ estimates. \textbf{Corresponding author} Marco Bramanti}}
\author{Maochun Zhu, Marco Bramanti, Pengcheng Niu}
\maketitle

\begin{abstract}
Let $X_{1},\ldots,X_{q}$ be the basis of the space of horizontal vector fields
on a homogeneous Carnot group $\mathbb{G=}\left(
\mathbb{R}
^{n},\circ\right)  $ ($q<n$). We consider the following divergence degenerate
elliptic system%
\[
\sum_{\beta=1}^{N}\sum_{i,j=1}^{q}X_{i}\left(  a_{\alpha\beta}^{ij}%
(x)X_{j}u^{\beta}\right)  =\sum_{i=1}^{q}X_{i}f_{\alpha}^{i},\text{
\ \ \ }\alpha=1,2,...,N
\]
where the coefficients $a_{\alpha\beta}^{ij}$ are real valued bounded
measurable functions defined in $\Omega\subset\mathbb{G}$, satisfying the
strong Legendre condition and belonging to the space $VMO_{loc}\left(
\Omega\right)  $ (defined by the Carnot-Carath\'{e}odory distance induced by
the $X_{i}$'s). We prove interior $HW^{1,p}$ estimates ($2\leq p<\infty$) for
weak solutions to the system.

\end{abstract}

\section{Introduction}

Let
\[
X_{i}=\sum_{j=1}^{n}b_{ij}\left(  x\right)  \partial_{x_{j}},\ i=1,2,...,q,
\]
be a family of real smooth vector fields defined in some bounded domain
$\Omega\subset$ $\mathbb{R}^{n}$ $\left(  q<n\right)  $ and satisfying
H\"{o}rmander's condition: the Lie algebra generated by $X_{1},...,X_{q}$
spans $\mathbb{R}^{n}$ at any point of $\Omega$. Since H\"{o}rmander's famous
paper \cite{h}, there has been tremendous work on the geometric properties of
H\"{o}rmander's vector fields, see \cite{nsw2}, \cite{J}, \cite{FL},
\cite{BGW}, \cite{garofalo}, \cite{G lu}, \cite{G lu1}, and references
therein. Meanwhile, regularity for linear degenerate elliptic equations
involving vector fields has been investigated and many results have been
proved, see for instance \cite{Folland}, \cite{rs}, \cite{BB1}, \cite{bb2}%
,\ \cite{bb4}, \cite{bb5}, \cite{BBLU}, \cite{BZ}, \cite{TN}, \cite{G lu},
\cite{G lu1} and references therein; as for subelliptic systems structured on
H\"{o}rmander's vector fields, we can quote \cite{Fazio}, \cite{xu-zu},
\cite{shores}.

In this paper we consider divergence degenerate elliptic systems structured on
H\"{o}rmander's vector fields in Carnot groups. Namely (here we briefly state
our assumptions and result; precise definitions and assumptions will be given
in \S \ \ref{sec background}), let $X_{1},\ldots,X_{q}$ be the canonical basis
of the space of horizontal vector fields in a homogeneous Carnot group
$\mathbb{G=}\left(  \mathbb{R}^{n},\circ\right)  $; we consider the system%
\begin{equation}
X_{i}\left(  a_{\alpha\beta}^{ij}(x)X_{j}u^{\beta}\right)  =X_{i}f_{\alpha
}^{i} \label{system}%
\end{equation}
in some domain $\Omega\subset\mathbb{R}^{n}$ where $\alpha,\beta
=1,\ldots,N,i,j=1,2,...,q$, $\mathbf{F}=\left(  f_{\alpha}^{i}\right)  \in
L^{p}\left(  \Omega;\mathbb{M}^{N\times q}\right)  $ $\left(  2\leq
p<\infty\right)  $ is a given $N\times q$ matrix. In (\ref{system}) and
throughout the paper, the summation is understood for repeated indices. If the
tensor $\left\{  a_{\alpha\beta}^{ij}(x)\right\}  $ satisfies the strong
Legendre condition (see (\ref{elliptic condition})), by Lax-Milgram theorem
the natural functional framework for solutions to (\ref{system}) is the
Sobolev space $HW_{loc}^{1,2}\left(  \Omega;\mathbb{R}^{N}\right)  $, so the
regularity problem for (\ref{system}) amounts to asking: if $\mathbf{F}\in
L^{p}\left(  \Omega;\mathbb{M}^{N\times q}\right)  $ for some $p>2$, can we
say that $u\in HW^{1,p}$, at least locally? We will prove an affirmative
answer to this question (see Theorem \ \ref{lp estimate}), under the
assumption that the coefficients $a_{\alpha\beta}^{ij}$ belong to the space
$VMO_{loc}\left(  \Omega\right)  $, with respect to the
Carnot--Carath\'{e}odory distance induced by the vector fields. Under this
respect, this result is in the same spirit as the $L^{p}$ regularity results
proved for nonvariational elliptic equations by Chiarenza-Frasca-Longo
\cite{cfl1}, \cite{cfl2}, for elliptic systems by Chiarenza-Franciosi-Frasca
\cite{CFF} (see also \cite{Chia}), and for nondivergence equations structured
on H\"{o}rmander's vector fields by Bramanti-Brandolini \cite{BB1},
\cite{bb2}, while analogous regularity estimates in Morrey spaces have been
proved for instance by Di Fazio-Palagachev-Ragusa in \cite{Fazio 2}, and by
Palagachev-Softova in \cite{Palagachev}. However, the technique of the proof
in the present case is completely different. Namely, while in all the
aforementioned papers $L^{p}$ or Morrey estimates are proved by exploiting
representation formulas for solutions and singular integral estimates, in the
case of subelliptic \emph{systems}, even on Carnot groups, no result about
representation formulas by means of homogeneous fundamental solutions seems to
be known. Hence we have to make use of a different technique, which has been
designed and exploited in a series of papers by Byun-Wang to deal with
elliptic equations and systems, also in very rough domains: see \cite{wang},
\cite{ss b1}, \cite{SS b} and references therein. Namely, the key technical
point is a series of local estimates involving the \emph{maximal function }of
$\left\vert Xu\right\vert ^{2}$ (\S \S \ \ref{sec maximal}%
-\ref{section L^p estimates}) which hold under an assumption of smallness of
the mean oscillation of the coefficients. One of the tools used to prove these
local estimates is the possibility of approximating, locally, the solution to
a system with small datum and small oscillation of the coefficients by the
solution to a different system, with \emph{constant }coefficients
(\S \ \ref{sec approx}). In turn, the solution to a constant coefficients
system on a Carnot group is known to satisfy an $L^{\infty}$ gradient bound
(see Theorem \ref{L2.4}) which turns out to be a key tool in our proof. This
result about systems with constant coefficients in Carnot groups has been
proved by Shores \cite{shores}, and represents one of the main reasons why we
have restricted ourselves to the case of Carnot groups instead of considering
general H\"{o}rmander's vector fields.

This paper represents the first case of study of $L^{p}$ estimates on the
\textquotedblleft subelliptic gradient\textquotedblright\ $Xu$ for subelliptic
systems. Di Fazio and Fanciullo in \cite{Fazio} have deduced interior Morrey
regularity in spaces $L^{2,\lambda}$ for weak solutions to the system
(\ref{system}) under the assumption that the coefficients $a_{\alpha\beta
}^{ij}$ belong to the class $VMO_{X}\cap L^{\infty\text{ }}$, while
Schauder-type estimates have been proved for subelliptic systems by Xu--Zuily
\cite{xu-zu}.

This paper is organized as follows: in Section 2 we recall some basic facts
about Carnot groups and state precisely our assumptions and main results; in
Section 3 we prove the approximation result for local solutions to the
original system by means of solutions to a system with constant coefficients;
in Section 4 we prove some local estimates on the Hardy-Littlewood maximal
function of $\left\vert Xu\right\vert ^{2}$, and in Section 5 we come to the
proof of our main result.

\section{Preliminaries and statement of the
results\label{Section Known results}}

\subsection{Background on Carnot groups\label{sec background}}

We are going to recall here a few facts about Carnot groups that we will need
in the following. For the proofs, more properties, and examples, we refer the
reader to the paper \cite{Folland}, the books \cite{BLU} and \cite[Chaps.
XII-XIII]{St}.

\begin{definition}
[Homogeneous Carnot groups]\label{Def Carnot group}A \emph{homogeneous group}
$\mathbb{G}$ is the set $\mathbb{R}^{n}$ endowed with a Lie group operation
$\circ$ (\textquotedblleft translation\textquotedblright), where the origin is
the group identity, and a family $\left\{  D\left(  \lambda\right)  \right\}
_{\lambda>0}$ of group automorphisms (\textquotedblleft
dilations\textquotedblright), acting as follows:%
\[
D\left(  \lambda\right)  \left(  x_{1},x_{2},...,x_{n}\right)  =\left(
\lambda^{\alpha_{1}}x_{1},\lambda^{\alpha_{2}}x_{2},...,\lambda^{\alpha_{n}%
}x_{n}\right)  \text{ \ \ }\forall\lambda>0
\]
for some fixed exponents $0<\alpha_{1}<\alpha_{2}<...<\alpha_{n}$. The number
$Q=\sum_{j=1}^{n}\alpha_{j}$ is called the \emph{homogeneous dimension} of
$\mathbb{G}$.

We say that a vector field $X=\sum_{j=1}^{n}b_{j}\left(  x\right)
\partial_{x_{j}}$ is left invariant if for any smooth function $f$ one has
\[
X^{x}\left(  f\left(  y\circ x\right)  \right)  =\left(  Xf\right)  \left(
y\circ x\right)  \text{ \ \ \ }\forall x,y\in\mathbb{G};
\]
we say that $X$ is $k$-homogeneous if for any smooth function $f$ one has
\[
X\left(  f\left(  D\left(  \lambda\right)  x\right)  \right)  =\lambda
^{k}\left(  Xf\right)  \left(  D\left(  \lambda\right)  x\right)
\text{\ \ }\forall\lambda>0,x\in\mathbb{G}.
\]

Let $X_{i}$ ($i=1,2,...,n$) be the unique left invariant vector field on
$\mathbb{G}$ which at the origin coincides with $\partial_{x_{i}}$. We assume
that for some integer $q<n$ the vector fields $X_{1},X_{2},...,X_{q}$ are
$1$-homogeneous and satisfy H\"{o}rmander's condition in $\mathbb{R}^{n}$: the
Lie algebra generated by the $X_{i}$'s at any point has dimension $n$. Under
these assumptions we say that $\mathbb{G}$ is a \emph{homogeneous Carnot
group} and that $\left\{  X_{1},X_{2},...,X_{q}\right\}  $ is the
\emph{canonical basis} of the space of horizontal vector fields.
\end{definition}

The properties required in the above definition have a number of consequences:
the exponents $\alpha_{i}$ are actually positive integers, the Lie algebra of
$\mathbb{G}$ is stratified, homogeneous and nilpotent; the vector fields
$X_{i}$ have polynomial coefficients. Moreover, the Lebesgue measure of
$\mathbb{R}^{n}$ is the Haar measure in $\mathbb{G}$.

Like for any set of H\"{o}rmander's vector fields, it is possible to define
the corresponding \emph{Carnot-Carath\'{e}odory distance} $d_{X},$ as follows.

\begin{definition}
[CC-distance]\label{Def distance}For any $\delta>0,$ let $C_{\delta}$ be the
set of absolutely continuous curves $\phi:\left[  0,1\right]  \rightarrow
\mathbb{R}^{n}$ such that%
\[
\phi^{\prime}\left(  t\right)  =\sum_{i=1}^{q}a_{i}\left(  t\right)
X_{i}\left(  \phi_{i}\left(  t\right)  \right)  \text{ with }\left\vert
a_{i}\left(  t\right)  \right\vert \leq\delta\text{ for a.e. }t\in\left[
0,1\right]  .
\]
Then%
\[
d_{X}\left(  x,y\right)  =\inf\left\{  \delta>0:\exists\phi\in C_{\delta
}\text{ with }\phi\left(  0\right)  =x,\phi\left(  1\right)  =y\right\}  .
\]

\end{definition}

The function $d_{X}$ turns out to be finite for any couple of points, and is
actually a distance, called Carnot-Carath\'{e}odory distance; due to the
structure of Carnot group, $d_{X}$ is also left invariant and $1$-homogeneous
on $\mathbb{G}$. Let%

\[
B_{r}\left(  x\right)  =\left\{  y\in\mathbb{G}:d_{X}\left(  x,y\right)
<r\right\}
\]
be the metric ball of center $x$ and radius $r$ in $\mathbb{G}$. Since the
Lebesgue measure in $\mathbb{R}^{n}$ is the Haar measure on $\mathbb{G}$, one
has (writing $\left\vert A\right\vert $ for the measure of $A$)%
\begin{equation}
\left\vert B_{r}\left(  x\right)  \right\vert =\omega_{\mathbb{G}}r^{Q},
\label{dimension}%
\end{equation}
where $Q\ $is the homogeneous dimension of $\mathbb{G}$ and $\omega
_{\mathbb{G}}$ is a positive constant.

Next, we need to define the function spaces we will use in the following.

\begin{definition}
[Horizontal Sobolev spaces]\label{D2.1}For any $p\geq1$ and domain $\Omega$
$\subset\mathbb{G}$, let us define the \emph{horizontal Sobolev space}:%
\[
HW^{1,p}\left(  \Omega;\mathbb{R}^{N}\right)  =\left\{  u\in L^{p}\left(
\Omega;\mathbb{R}^{N}\right)  :\left\Vert u\right\Vert _{HW^{1,p}\left(
\Omega;%
\mathbb{R}
^{N}\right)  }<\infty\right\}
\]
with the norm%
\[
\left\Vert u\right\Vert _{HW^{1,p}\left(  \Omega;%
\mathbb{R}
^{N}\right)  }=\left\Vert u\right\Vert _{L^{p}\left(  \Omega;%
\mathbb{R}
^{N}\right)  }+\left\Vert Xu\right\Vert _{L^{p}\left(  \Omega;\mathbb{R}%
^{N}\right)  },
\]
having set%
\[%
\begin{array}
[c]{lll}%
\left\Vert u\right\Vert _{L^{p}\left(  \Omega;\mathbb{R}^{N}\right)
}=\left\Vert \left\vert u\right\vert \right\Vert _{L^{p}\left(  \Omega\right)
}, & \text{with} & \left\vert u\right\vert =\left(  \sum_{\alpha=1}%
^{N}\left\vert u^{\alpha}\right\vert ^{2}\right)  ^{1/2}\text{ and}\\
\left\Vert Xu\right\Vert _{L^{p}\left(  \Omega;\mathbb{R}^{N}\right)
}=\left\Vert \left\vert Xu\right\vert \right\Vert _{L^{p}\left(
\Omega\right)  }, & \text{with} & \left\vert Xu\right\vert =\left(
\sum_{\alpha=1}^{N}\sum_{i=1}^{q}\left\vert X_{i}u^{\alpha}\right\vert
^{2}\right)  ^{1/2}.
\end{array}
\]
Also, we define the space $HW_{loc}^{1,p}\left(  \Omega;\mathbb{R}^{N}\right)
$ as the space of functions $u$ such that $u\phi\in HW^{1,p}\left(
\Omega;\mathbb{R}^{N}\right)  $ for any $\phi\in C_{0}^{\infty}\left(
\Omega\right)  $ and the space $HW_{0}^{1,p}\left(  \Omega;\mathbb{R}%
^{N}\right)  $ as the closure of $C_{0}^{\infty}\left(  \Omega;\mathbb{R}%
^{N}\right)  $ in the norm $HW^{1,p}\left(  \Omega;\mathbb{R}^{N}\right)  $.
\end{definition}

\begin{definition}
[BMO-type spaces]\label{D2.2}For any $\Omega^{\prime}\Subset\Omega,$ let
$R_{0}$ be a number such that $B_{r}\left(  x\right)  \Subset\Omega$ for any
$x\in\Omega^{\prime}$ and $r\leq R_{0}$. For any $f\in L_{loc}^{1}\left(
\Omega\right)  $ and $r\leq R_{0},$ let%
\[
\eta_{\Omega^{\prime},R_{0},f}\left(  r\right)  =\underset{x_{0}\in
\Omega^{\prime},0<\rho\leq r}{\sup}\frac{1}{\left\vert B_{\rho}\left(
x_{0}\right)  \right\vert }\int_{B_{\rho}\left(  x_{0}\right)  }\left\vert
f\left(  x\right)  -f_{B_{\rho}\left(  x_{0}\right)  }\right\vert ^{2}dx,
\]
where $f_{B_{\rho}\left(  x_{0}\right)  }=\frac{1}{\left\vert B_{\rho}\left(
x_{0}\right)  \right\vert }\int_{B_{\rho}\left(  x_{0}\right)  }f\left(
x\right)  dx$.

We say that $f$ is ($\delta,R$)-vanishing in $\Omega^{\prime}$ (for a couple
of fixed positive numbers $\delta,R$, with $R\leq R_{0}$) if
\[
\eta_{\Omega^{\prime},R_{0},f}\left(  R\right)  <\delta^{2}.
\]

We say that $f\in VMO_{loc}\left(  \Omega\right)  $ if for any $\Omega
^{\prime}\Subset\Omega$ and $R_{0}$ such that $B_{r}\left(  x\right)
\Subset\Omega$ for any $r\leq R_{0}$ and $x\in\Omega^{\prime},$ we have%
\[
\eta_{\Omega^{\prime},R_{0},f}\left(  r\right)  \rightarrow0\text{ as
}r\rightarrow0.
\]
The function $\eta_{\Omega^{\prime},R_{0},f}$ is called the local $VMO$
modulus of $f$ on $\Omega^{\prime}$.
\end{definition}

We will use the following well-known result by Jerison (see \cite[Thm 2.1]{J}
for the case $p=2$ and \cite[\S 6]{J} for $p\neq2$):

\begin{theorem}
[Poincar\'{e}'s inequality]For $1\leq p<\infty$ there exists a positive
constant $c=c\left(  \mathbb{G},p\right)  $, such that for any $u\in
HW^{1,p}\left(  B_{R}\right)  ,$
\begin{equation}
\left\Vert u-u_{B_{R}}\right\Vert _{L^{p}\left(  B_{R}\right)  }\leq
cR\left\Vert Xu\right\Vert _{L^{p}\left(  B_{R}\right)  }.
\label{poincare inequality}%
\end{equation}
If $u\in HW_{0}^{1,p}\left(  B_{R}\right)  $,
\begin{equation}
\left\Vert u\right\Vert _{L^{p}\left(  B_{R}\right)  }\leq cR\left\Vert
Xu\right\Vert _{L^{p}\left(  B_{R}\right)  }. \label{poincare inequality 1}%
\end{equation}

\end{theorem}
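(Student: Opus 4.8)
The plan is to reduce the statement to the unit ball by exploiting the group structure, to prove the estimate there by a compactness argument, and then to pass to the $HW_0^{1,p}$ version. By left invariance of $d_X$ and of the Lebesgue measure it suffices to treat balls $B_R=B_R(0)$ centered at the origin (for a general center $x_0$ one composes the dilation below with the left translation $y\mapsto x_0\circ y$ and uses left invariance of the $X_i$). Given $u\in HW^{1,p}(B_R)$, set $v(y)=u\big(D(R)y\big)$ for $y\in B_1$; since $d_X(0,D(R)y)=R\,d_X(0,y)$ this is well defined on $B_1$. As each $X_i$ is $1$-homogeneous, $(X_iv)(y)=R\,(X_iu)\big(D(R)y\big)$, and since the Jacobian of $D(R)$ equals $R^Q$ and the Lebesgue measure is the Haar measure, the change of variables $z=D(R)y$ together with \eqref{dimension} gives $v_{B_1}=u_{B_R}$ and
\[
\|v-v_{B_1}\|_{L^p(B_1)}=R^{-Q/p}\|u-u_{B_R}\|_{L^p(B_R)},\qquad
\|Xv\|_{L^p(B_1)}=R^{1-Q/p}\|Xu\|_{L^p(B_R)}.
\]
Hence \eqref{poincare inequality} on $B_1$ with constant $c$ is equivalent to \eqref{poincare inequality} on $B_R$ with constant $cR$, and the map $u\mapsto v$ preserves $HW_0^{1,p}$, so the same reduction applies to \eqref{poincare inequality 1}.

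To prove \eqref{poincare inequality} on $B_1$, argue by contradiction: if it fails there are $u_k\in HW^{1,p}(B_1)$ with $(u_k)_{B_1}=0$, $\|u_k\|_{L^p(B_1)}=1$ and $\|Xu_k\|_{L^p(B_1)}\to 0$. The sequence is bounded in $HW^{1,p}(B_1)$; because metric balls of H\"ormander vector fields are doubling and support a weak $(1,p)$-Poincar\'e inequality, the embedding $HW^{1,p}(B_1)\hookrightarrow L^p(B_1)$ is compact (Rellich--Kondrachov in this setting). Passing to a subsequence, $u_k\to u$ strongly in $L^p(B_1)$ and $Xu_k\to Xu$ weakly, so $\|u\|_{L^p(B_1)}=1$, $u_{B_1}=0$ and $Xu=0$ in $B_1$. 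Since the $X_i$ satisfy H\"ormander's condition, $B_1$ is connected by horizontal curves (Chow--Rashevskii), hence $Xu=0$ forces $u$ to be a.e.\ constant on $B_1$; combined with $u_{B_1}=0$ this gives $u\equiv 0$, contradicting $\|u\|_{L^p(B_1)}=1$.

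For the $HW_0^{1,p}$ estimate, when $p>1$ one deduces \eqref{poincare inequality 1} from \eqref{poincare inequality}: extend $u\in HW_0^{1,p}(B_R)$ by zero to $B_{2R}$ (the extension lies in $HW^{1,p}(B_{2R})$ with $Xu$ extended by zero), apply \eqref{poincare inequality} on $B_{2R}$, and bound the mean by H\"older and \eqref{dimension},
\[
|u_{B_{2R}}|\,|B_{2R}|^{1/p}\le\big(|B_R|/|B_{2R}|\big)^{1-1/p}\|u\|_{L^p(B_R)}=2^{-Q(1-1/p)}\|u\|_{L^p(B_R)};
\]
since $2^{-Q(1-1/p)}<1$ the resulting term $\|u\|_{L^p(B_R)}$ is absorbed into the left-hand side. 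For $p=1$ (and, uniformly, for all $p$) one instead repeats the contradiction argument directly in $HW_0^{1,p}(B_1)$: the limit $u$ is a constant belonging to $HW_0^{1,p}(B_1)$, hence $u=0$, again a contradiction.

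The delicate point is the compact embedding invoked on $B_1$, which is essentially equivalent to a quantitative Poincar\'e inequality on metric balls, so the scheme above is mildly circular and one must break it by an independent argument. On a Carnot group this is exactly Jerison's original route: letting $\mathcal{L}=\sum_{i=1}^q X_i^2$ and $\Gamma$ be Folland's fundamental solution of $\mathcal{L}$ --- smooth off the diagonal and homogeneous of degree $2-Q$ --- one integrates by parts to obtain, for smooth $u$, a representation $u(x)-u_{B_1}=\sum_{i=1}^q\int_{B_1}K_i(x,y)\,X_iu(y)\,dy$ with kernels $K_i$ built by localizing $X_i^y\Gamma(y^{-1}\circ x)$ and subtracting the mean, satisfying $|K_i(x,y)|\le C\,d_X(x,y)^{1-Q}$ on $B_1$; the associated integral operators are then bounded on $L^p(B_1)$ by the standard fractional-integration estimate, which yields \eqref{poincare inequality} first for smooth $u$ and then, by density, in general. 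I expect the bookkeeping in the localization of $\Gamma$ and in controlling the lower-order correction terms to be the most technical part of this alternative proof.
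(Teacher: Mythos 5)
The paper does not prove this theorem; it simply cites Jerison \cite{J} (who establishes the inequality for general H\"ormander vector fields on sufficiently small metric balls) and then observes in the surrounding remark that the dilation invariance of the inequality on a Carnot group upgrades the result to arbitrary balls with an absolute constant. Your reduction of $B_R$ to $B_1$ via $v(y)=u(D(R)y)$, with $(X_iv)(y)=R\,(X_iu)(D(R)y)$ and the Jacobian $R^Q$, is precisely the scaling argument the paper gestures at, and your bookkeeping there is correct, as is the left-translation reduction to balls centered at the origin and the verification $v_{B_1}=u_{B_R}$.

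The core estimate on $B_1$ is where your two routes diverge in quality. The compactness argument is not a valid independent proof: as you yourself flag, the compact embedding $HW^{1,p}(B_1)\hookrightarrow L^p(B_1)$ in the doubling/H\"ormander setting is proved via (and is essentially equivalent to) a Poincar\'e-type inequality, so this route is circular rather than merely ``mildly'' so, and it also drags the $p=1$ case of \eqref{poincare inequality 1} back into the same circle since there your H\"older absorption trick degenerates ($2^{-Q(1-1/p)}=1$). Your second route --- the representation formula $u(x)-u_{B_1}=\sum_i\int_{B_1}K_i(x,y)\,X_iu(y)\,dy$ built from Folland's homogeneous fundamental solution of $\sum X_i^2$, with $|K_i(x,y)|\lesssim d_X(x,y)^{1-Q}$, followed by the $L^p$-boundedness of the resulting fractional integral on a bounded set --- is sound and is exactly the content of the cited result (Jerison's proof, simplified on a group by the availability of $\Gamma$; cf.\ also \cite{G lu}, \cite{Folland}). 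This is the argument one should actually give; the compactness discussion should be dropped. A small improvement for \eqref{poincare inequality 1}: for $u\in C_0^\infty(B_R)$ one has the cleaner representation $u(x)=\sum_i\int X_i\Gamma(y^{-1}\circ x)\,X_iu(y)\,dy$ with no mean subtraction and no localization, which gives \eqref{poincare inequality 1} directly for every $1\le p<\infty$ and then extends to $HW_0^{1,p}(B_R)$ by density, avoiding both the extension-by-zero step and the $p=1$ difficulty. With the compactness detour removed and \eqref{poincare inequality 1} handled this way, your proposal matches the paper's intended proof.
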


The previous theorem holds for a general system of H\"{o}rmander's vector
fields; in that case, however, some restriction on the center and radius of
the ball $B_{R}$ applies (see \cite[Thm. 2.1]{J}); on a Carnot group, instead,
due to the dilation invariance of the inequalities (\ref{poincare inequality})
and (\ref{poincare inequality 1}), these hold for any ball $B_{R}$ and with an
\textquotedblleft absolute\textquotedblright\ constant $c$.

We will also make use of the following

\begin{definition}
[Space of homogeneous type, see \cite{cw}]Let $S$ be a set and $d:$ $S\times
S\rightarrow$ $\left[  0,\infty\right)  $ a quasidistance, that is, for some
constant $c\geq1$ one has%
\begin{align}
d\left(  x,y\right)   &  =0\Longleftrightarrow x=y\nonumber\\
d\left(  x,y\right)   &  =d\left(  y,x\right) \nonumber\\
d\left(  x,y\right)   &  \leq c\left[  d\left(  x,z\right)  +d\left(
z,y\right)  \right]  \label{quasitriangle}%
\end{align}
for all $x,y,z\in S$. The balls defined by $d$ induce a topology in $S$; let
us assume that the $d$-balls are open in this topology. Moreover, assume that
there exists a regular Borel measure $\mu$\ on $S$, such that the "doubling
condition" is satisfied:
\begin{equation}
\mu\left(  B_{2r}(x)\right)  \leq c\mu\left(  B_{r}(x)\right)  ,
\label{doubling}%
\end{equation}
for every $r>0$, $x\in S$ and some positive constant $c$. Then we say that
$(S,d,\mu)$ is a \emph{space of homogeneous type}.
\end{definition}

\begin{remark}
\label{d-regular and homegeneous}Note that in our context any
Carnot-Carath\'{e}odory ball $B_{R}\left(  x_{0}\right)  $ is a $d_{X}%
$-regular domain (see for instance \cite[Lemma 4.2]{bb4}), that is there
exists a positive constant $c_{d}$ such that
\begin{equation}
\left\vert B_{R}\left(  x_{0}\right)  \cap B_{r}\left(  x\right)  \right\vert
\geq c_{d}\left\vert B_{r}\left(  x\right)  \right\vert \text{ \ }\forall
r>0,\forall x\in B_{R}\left(  x_{0}\right)  . \label{d-regular}%
\end{equation}
This implies that $\left(  B_{R}\left(  x_{0}\right)  ,d_{X},dx\right)  $ is a
space of homogeneous type. Moreover, a simple dilation argument shows that, in
a Carnot group, the constant $c_{d},$ and therefore the doubling constant of
$\left(  B_{R}\left(  x_{0}\right)  ,d_{X},dx\right)  $, is independent of $R$.
\end{remark}

\subsection{Assumptions and known results about degenerate
systems\label{Assumption}}

The general assumptions which will be in force throughout the paper are
collected in the following:

\textbf{Assumption (H). }We assume that $\mathbb{G}$ is a homogeneous Carnot
group in $\mathbb{R}^{n}$ and $\left\{  X_{1},X_{2},...,X_{q}\right\}  $ is
the canonical basis of the space of horizontal vector fields in $\mathbb{G}$
(see Definition \ref{Def Carnot group}). We assume that the coefficients
\[
\left\{  a_{\alpha\beta}^{ij}\right\}  _{\substack{i,j=1,...,q\\\alpha
,\beta=1,...,N}}
\]
in (\ref{system}) are real valued, bounded measurable functions defined in
$\Omega$ and satisfying the \emph{strong Legendre condition}: there exists a
constant $\mu>0$ such that
\begin{equation}
\mu|\xi|^{2}\leq a_{\alpha\beta}^{ij}(x)\xi_{i}^{\alpha}\xi_{j}^{\beta}\leq
\mu^{-1}|\xi|^{2}\label{elliptic condition}%
\end{equation}
for any $\xi\in\mathbb{M}^{N\times q}$ , a.e.$\,\ x\in\Omega$.

\bigskip

We recall the standard definition of weak solution:

\begin{definition}
We say that $u\in HW^{1,2}\left(  \Omega;\mathbb{R}^{N}\right)  $ is a weak
solution to the system (\ref{system}), if it satisfies%
\[
\int_{\Omega}a_{\alpha\beta}^{ij}(x)X_{j}u^{\beta}X_{i}\varphi^{\alpha}%
dx=\int_{\Omega}f_{\alpha}^{i}X_{i}\varphi^{\alpha}dx
\]
for any $\varphi\in HW_{0}^{1,2}\left(  \Omega;\mathbb{R}^{N}\right)  $.
\end{definition}

Recall that on a Carnot group the transposed of a vector field is just the
opposite: $X_{i}^{\ast}=-X_{i}$. Hence the above definition of weak solution
is consistent with the way the system (\ref{system}) is written.

\begin{remark}
\label{existence}Let $B_{R}\subset\Omega$ be any metric ball. If $f_{\alpha
}^{i}\in L^{2}\left(  B_{R}\right)  $ and $u_{0}\in HW^{1,2}\left(
B_{R}\right)  $, then by assumption (\ref{elliptic condition}) and
Poincar\'{e}'s inequality (\ref{poincare inequality 1}) we can apply
Lax-Milgram's theorem, and conclude that there exists a unique solution $u\in
HW^{1,2}\left(  B_{R};\mathbb{R}^{N}\right)  $ to system (\ref{system}) such
that $u-u_{0}\in HW_{0}^{1,2}\left(  B_{R}\right)  $. Moreover, the following
a priori estimate holds:%
\begin{equation}
\left\Vert u\right\Vert _{HW^{1,2}\left(  B_{R};\mathbb{R}^{N}\right)  }\leq
c\left(  \left\Vert \mathbf{F}\right\Vert _{L^{2}(B_{R};\mathbb{M}^{N\times
q})}+\left\Vert u_{0}\right\Vert _{HW^{1,2}\left(  B_{R};\mathbb{R}%
^{N}\right)  }\right)  \label{a priori Hilbert}%
\end{equation}
for some constant $c$ only depending on $\mathbb{G},\mu,R$ (see \cite[Chap.
8]{c.w} for a proof of this fact in the elliptic case).
\end{remark}

The next result is taken from \cite[Corollary 19]{shores}. See also
\cite{EMS}, where the analogous parabolic inequality is proved.

\begin{theorem}
\label{L2.4}Let $v\in HW^{1,2}\left(  B\left(  x_{0},KR\right)  ;\mathbb{R}%
^{N}\right)  $ be a solution to the system%
\[
X_{i}\left(  a_{\alpha\beta}^{ij}X_{j}v^{\beta}\right)  =0\text{ in }B\left(
x_{0},KR\right)
\]
with constant coefficients $a_{\alpha\beta}^{ij}$ satisfying
(\ref{elliptic condition}) and some $K>1$. Then $v\in C^{\infty}\left(
B\left(  x_{0},KR\right)  ;\mathbb{R}^{N}\right)  $; moreover \ \
\[
\underset{B_{R}\left(  x_{0}\right)  }{\sup}\left\vert Xv\right\vert ^{2}\leq
cR^{-2}\frac{1}{\left\vert B_{KR}\left(  x_{0}\right)  \right\vert }%
\int_{B_{KR}\left(  x_{0}\right)  }\left\vert v\right\vert ^{2}dx,
\]
where the positive constant $c$ depends on $K,\mu,\mathbb{G},N$ but is
independent of $x_{0}$, $R$ and $v$.
\end{theorem}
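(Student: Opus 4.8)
The plan is to deduce this from interior Schauder-type smoothness for constant-coefficient systems combined with a Caccioppoli inequality and an iteration/bootstrap on the order of horizontal derivatives. First I would observe that since the coefficients $a_{\alpha\beta}^{ij}$ are constant, any horizontal derivative $X_k v$ of a solution is again a solution of the same system (using that the $X_k$'s, being left invariant on $\mathbb{G}$, commute with the equation only up to lower-order commutator terms — but these commutators again lie in the span of left invariant vector fields, so one gets a system of the same structure, possibly after differentiating enough times to absorb the stratification). This is the standard difference-quotient argument adapted to the group setting; it yields $v\in C^\infty$ and, more importantly, a hierarchy of Caccioppoli inequalities. The smoothness statement itself can be quoted essentially verbatim from the hypoellipticity of the constant-coefficient operator, since a constant-coefficient system satisfying (\ref{elliptic condition}) on a Carnot group is built from left invariant $1$-homogeneous Hörmander vector fields and is therefore hypoelliptic by Hörmander's theorem applied componentwise after a standard reduction.

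The heart of the matter is the $L^\infty$ gradient bound. Here I would proceed by a Moser-type iteration or, more cleanly, by combining (i) a Caccioppoli inequality on nested balls, (ii) the Sobolev embedding associated with the homogeneous dimension $Q$ (valid because $(B_{KR}(x_0),d_X,dx)$ is a space of homogeneous type with doubling constant independent of $R$, as noted in Remark \ref{d-regular and homegeneous}), and (iii) the fact that $Xv$ solves a system of the same type so the whole scheme can be applied to $Xv$, $X^2v$, and so on. Concretely: Caccioppoli gives $\int_{B_{\rho}}|Xv|^2 \le c(r-\rho)^{-2}\int_{B_r}|v-\text{const}|^2$; Sobolev-Poincaré upgrades $L^2$ of $v$ to $L^{2^*}$; iterating De Giorgi–Nash–Moser style over a geometric sequence of radii between $R$ and $KR$ produces $\sup_{B_R}|v| \le c R^{-Q/2}\|v\|_{L^2(B_{KR})}$; then applying the same sup bound to each component of $Xv$ (legitimate since $Xv$ solves the same kind of constant-coefficient system) and feeding in the Caccioppoli estimate for $Xv$ in terms of $v$ yields $\sup_{B_R}|Xv|^2 \le cR^{-2}\,|B_{KR}|^{-1}\int_{B_{KR}}|v|^2$, which is exactly the claimed inequality. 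The dilation invariance of the whole setup on $\mathbb{G}$ guarantees the constant $c$ depends only on $K,\mu,\mathbb{G},N$ and not on $x_0$ or $R$ — one simply rescales via $D(\lambda)$ to reduce to $R=1$.

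The main obstacle I anticipate is making the step "$Xv$ solves a system of the same structural type" fully rigorous: on a general Carnot group the horizontal vector fields do not commute, so differentiating the system along $X_k$ introduces commutator terms $[X_k,X_j]$ which are vector fields of homogeneity $2$ (or higher), not horizontal. The clean way around this is to differentiate along a full basis of the first layer and organize the resulting equations so that the "extra" terms are themselves horizontal derivatives of $v$ of the same or lower order with bounded (constant) coefficients — i.e. to set up the Caccioppoli inequality directly for $X^k v$ (all horizontal derivatives of order $k$) rather than for a single $X_k v$, exploiting that any iterated bracket eventually lands back in a controlled combination by the stratification. An alternative, and perhaps the route actually taken in \cite{shores}, is to invoke known interior a priori estimates for constant-coefficient subelliptic systems in Sobolev spaces $HW^{k,2}$ (subelliptic regularity / hypoellipticity with quantitative bounds) together with the subelliptic Sobolev embedding $HW^{k,2}\hookrightarrow L^\infty$ once $k$ exceeds $Q/2$, applied to $v$ and to $Xv$ separately; this bypasses the iteration but requires citing the quantitative hypoelliptic estimate, whose constants must again be tracked to be scale-invariant. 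Either way, once the "$Xv$ is a solution of the same type" principle and one scale-invariant $L^\infty$-from-$L^2$ bound are in hand, the conclusion follows by one more application of Caccioppoli.
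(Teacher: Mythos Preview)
The paper does not actually prove this theorem: it is quoted verbatim from \cite[Corollary~19]{shores}, so there is no ``paper's own proof'' to compare against beyond that citation. What I can do is assess whether your outline would reproduce the result.

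Your route~(b) --- bootstrap to $HW^{k,2}_{\mathrm{loc}}$ for every $k$ via horizontal difference quotients along group translations, track the scale-invariant constants, and then invoke the subelliptic Sobolev embedding $HW^{k,2}\hookrightarrow C^{0}$ once $k>Q/2$, applied to $v$ and then to $Xv$ together with one Caccioppoli step --- is correct and is essentially the argument in \cite{shores}. The commutator issue you flag is real but harmless in this scheme: differentiating along a horizontal $X_k$ produces terms involving second-layer fields, but those are (constant-coefficient) linear combinations of iterated horizontal derivatives of $v$, so if one estimates the full vector of order-$k$ horizontal derivatives simultaneously the right-hand side is absorbed, and the stratification being of finite step closes the loop.

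Where your proposal needs correction is the main paragraph, which leans on a De~Giorgi--Nash--Moser iteration to pass from $L^{2}$ to $L^{\infty}$. That argument is not available for \emph{systems}: Moser's iteration requires testing with nonlinear powers of the solution, which does not respect the coupling, and De~Giorgi's classical counterexample shows that local boundedness can genuinely fail for elliptic systems with merely bounded coefficients. The constant-coefficient hypothesis rescues you, but only through the higher-order Sobolev route~(b), not through a Moser iteration. So your proof goes through if you drop the DGNM paragraph and commit to~(b); as written, the primary line of argument has a gap and the correct argument is relegated to the ``alternative''.
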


The following result can be proved in a completely standard way by suitable
cutoff functions (for the analogous elliptic version see for instance
\cite[Thm. 2.1 p.134]{c.w}):

\begin{theorem}
[Caccioppoli's inequality]Let $u\in HW^{1,2}\left(  B_{R}\left(  \overline
{x}\right)  ;\mathbb{R}^{N}\right)  $ be a weak solution to (\ref{system}) in
$B_{R}\left(  \overline{x}\right)  \subset\Omega$. There exists a constant
$c>0$ depending on $\mathbb{G},N,R$ such that for any $\rho\in\left(
0,R\right)  ,$%
\begin{equation}
\int_{B_{\rho}\left(  \overline{x}\right)  }\left\vert Xu\left(  x\right)
\right\vert ^{2}dx\leq c\left[  \frac{1}{\left(  R-\rho\right)  ^{2}}%
\int_{B_{R}\left(  \overline{x}\right)  }\left\vert u\left(  x\right)
\right\vert ^{2}dx+\int_{B_{R}\left(  \overline{x}\right)  }\left\vert
\mathbf{F}\left(  x\right)  \right\vert ^{2}dx\right]  . \label{Caccioppoli}%
\end{equation}

\end{theorem}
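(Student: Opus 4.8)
The plan is to test the weak formulation against $\varphi=\zeta^{2}(u-u_{B_{R}(\overline{x})})$, where $\zeta\in C_{0}^{\infty}(B_{R}(\overline{x}))$ is a standard cutoff with $\zeta\equiv1$ on $B_{\rho}(\overline{x})$, $0\le\zeta\le1$, and $|X\zeta|\le c/(R-\rho)$; since $u-u_{B_R(\overline x)}\in HW^{1,2}(B_R(\overline x);\mathbb R^N)$ and $\zeta$ is compactly supported, $\varphi\in HW_0^{1,2}(B_R(\overline x);\mathbb R^N)$ is an admissible test function. First I would compute $X_{i}\varphi^{\alpha}=\zeta^{2}X_{i}u^{\alpha}+2\zeta(X_{i}\zeta)(u^{\alpha}-u^{\alpha}_{B_R(\overline x)})$, substitute into the identity $\int a_{\alpha\beta}^{ij}X_{j}u^{\beta}X_{i}\varphi^{\alpha}=\int f_{\alpha}^{i}X_{i}\varphi^{\alpha}$, and rearrange so that the term $\int \zeta^{2}a_{\alpha\beta}^{ij}X_{j}u^{\beta}X_{i}u^{\alpha}$ — which by the left-hand side of the strong Legendre condition \eqref{elliptic condition} is bounded below by $\mu\int\zeta^{2}|Xu|^{2}$ — sits alone on one side. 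The remaining terms are: a "mixed" term $\int 2\zeta(X_i\zeta)a_{\alpha\beta}^{ij}X_{j}u^{\beta}(u^{\alpha}-u^{\alpha}_{B_R(\overline x)})$, and the two datum terms coming from $\int f_{\alpha}^{i}X_{i}\varphi^{\alpha}$.

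Next I would estimate each of these terms by Cauchy–Schwarz together with Young's inequality $ab\le\varepsilon a^{2}+\tfrac{1}{4\varepsilon}b^{2}$, choosing $\varepsilon$ small enough (depending only on $\mu$) that all the resulting $\int\zeta^{2}|Xu|^{2}$ contributions can be absorbed into the left-hand side. Concretely: the mixed term is bounded, using the upper bound $|a_{\alpha\beta}^{ij}|\le\mu^{-1}$ from \eqref{elliptic condition}, by $\varepsilon\int\zeta^{2}|Xu|^{2}+\tfrac{c}{\varepsilon}\int|X\zeta|^{2}|u-u_{B_R(\overline x)}|^{2}$; the term $\int f_{\alpha}^{i}\zeta^{2}X_{i}u^{\alpha}$ by $\varepsilon\int\zeta^{2}|Xu|^{2}+\tfrac{c}{\varepsilon}\int\zeta^2|\mathbf F|^{2}$; and $\int 2\zeta(X_i\zeta)f_\alpha^i(u^\alpha-u^\alpha_{B_R(\overline x)})$ by $\int|\mathbf F|^2+c\int|X\zeta|^2|u-u_{B_R(\overline x)}|^2$. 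After absorption, using $|X\zeta|\le c/(R-\rho)$ and $\zeta\equiv1$ on $B_\rho(\overline x)$, I obtain
\[
\int_{B_{\rho}(\overline{x})}|Xu|^{2}\le c\left[\frac{1}{(R-\rho)^{2}}\int_{B_{R}(\overline{x})}|u-u_{B_R(\overline x)}|^{2}+\int_{B_{R}(\overline{x})}|\mathbf{F}|^{2}\right].
\]
To reach exactly the stated form \eqref{Caccioppoli} I would finally drop the mean by $\int_{B_R(\overline x)}|u-u_{B_R(\overline x)}|^2\le\int_{B_R(\overline x)}|u|^2$, which only weakens the constant.

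I do not expect any serious obstacle here: the argument is the textbook Caccioppoli estimate, and the only structural facts specific to the Carnot-group setting that it uses are that $X_i^\ast=-X_i$ (so the weak formulation has the displayed form) and that $\zeta^2(u-u_{B_R(\overline x)})\in HW^{1,2}_0(B_R(\overline x);\mathbb R^N)$ together with the product/chain rule $X_i(\zeta^2 w^\alpha)=\zeta^2 X_i w^\alpha+2\zeta(X_i\zeta)w^\alpha$, all of which hold verbatim for horizontal vector fields. The one point requiring a little care is the bookkeeping of the several small parameters $\varepsilon$ so that every $\int\zeta^2|Xu|^2$ term produced on the right is strictly dominated by the coefficient $\mu$ of that integral on the left; choosing, say, each $\varepsilon=\mu/8$ suffices. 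The dependence of the final constant on $\mathbb G$, $N$, and $R$ enters only through $\mu$, the Poincaré-type handling of the cutoff (none is actually needed once we keep the mean $u_{B_R(\overline x)}$), and — if one preferred to phrase the intermediate estimate without subtracting the mean — through Poincaré's inequality \eqref{poincare inequality}; in the stated version the $R$-dependence is in fact inessential.
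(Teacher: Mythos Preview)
Your proof is correct and is precisely the standard cutoff argument the paper has in mind; in fact the paper does not write out a proof at all, merely remarking that the result ``can be proved in a completely standard way by suitable cutoff functions'' and citing the classical elliptic analogue. One minor quibble: the pointwise bound $|a_{\alpha\beta}^{ij}|\le\mu^{-1}$ does not follow from the two-sided Legendre condition \eqref{elliptic condition} alone (that inequality controls only the quadratic form), but it is covered by the separate boundedness hypothesis on the coefficients in Assumption~(H), so the argument goes through unchanged.
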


\subsection{Statement of the result\label{sec main result}}

We now state precisely the main result of this paper.

\begin{theorem}
\label{lp estimate}Under the Assumption (H),\textbf{ }let the $a_{\alpha\beta
}^{ij}$'s belong to $VMO_{loc}\left(  \Omega\right)  $ and let $\Omega
^{\prime}\Subset\Omega$, $2<p<\infty$. Then there is a positive constant $c$
depending on $\mathbb{G},\mu,p,\Omega,\Omega^{\prime}$ and the local $VMO$
moduli of the $a_{\alpha\beta}^{ij}$'s in $\Omega^{\prime}$ such that if
$\mathbf{F}=\left(  f_{\alpha}^{i}\right)  \in L^{p}\left(  \Omega
;\mathbb{M}^{N\times q}\right)  $ and $u\in HW^{1,2}\left(  \Omega
;\mathbb{R}^{N}\right)  $ is a weak solution to (\ref{system}) in $\Omega$,
then $u\in HW^{1,p}\left(  \Omega^{\prime};\mathbb{R}^{N}\right)  $ and%
\begin{equation}
\left\Vert u\right\Vert _{HW^{1,p}\left(  \Omega^{\prime};\mathbb{R}%
^{N}\right)  }\leq c\left(  \left\Vert \mathbf{F}\right\Vert _{L^{p}%
(\Omega;\mathbb{M}^{N\times q})}+\left\Vert u\right\Vert _{L^{2}\left(
\Omega;\mathbb{R}^{N}\right)  }\right)  . \label{main estimate}%
\end{equation}

\end{theorem}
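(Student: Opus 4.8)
The plan is to follow the Byun--Wang perturbation/maximal function machinery, which the introduction announces, and which has already been set up by the preliminary results in the excerpt. The target estimate \eqref{main estimate} is an $L^p$ bound on $|Xu|$ for $p>2$, given that $u\in HW^{1,2}$ is a weak solution with datum $\mathbf F\in L^p$. The natural strategy is to prove a good-$\lambda$ (or level-set) inequality for the Hardy--Littlewood maximal function $\mathcal M(|Xu|^2)$ restricted to a suitable ball, and then integrate it against $\lambda^{p/2-1}\,d\lambda$ to pass from $L^2$ information to $L^p$ information. Concretely, I would first reduce to a local statement: fix $\Omega'\Subset\Omega$, choose a chain of intermediate domains, and work on a fixed metric ball $B_{R_0}$ with $B_{2R_0}\Subset\Omega$; by a covering argument it suffices to prove the estimate on each such ball. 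On $B_{R_0}$, the space $(B_{R_0},d_X,dx)$ is of homogeneous type with doubling constant independent of $R_0$ (Remark \ref{d-regular and homegeneous}), so the Hardy--Littlewood maximal function is weak-$(1,1)$ and strong-$(s,s)$ for $s>1$ with absolute constants, and the Calder\'on--Zygmund/Vitali decomposition is available there.

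The heart of the argument is the local comparison estimate, which I expect to be already proved in \S\S\ref{sec approx}--\ref{section L^p estimates}: given a weak solution $u$ on a ball $B_{r}(y)$, one first freezes (averages) the coefficients and, using the approximation result of \S\ref{sec approx}, compares $u$ with the solution $v$ of the constant-coefficient system having the same boundary data on a slightly smaller ball; the difference $w=u-v$ lies in $HW_0^{1,2}$ and, by Caccioppoli \eqref{Caccioppoli}, the strong Legendre condition, Poincar\'e's inequality \eqref{poincare inequality}, and the VMO smallness of the coefficients, satisfies
\[
\frac{1}{|B_r(y)|}\int_{B_r(y)}|Xw|^2\,dx\le \varepsilon\,\frac{1}{|B_{2r}(y)|}\int_{B_{2r}(y)}|Xu|^2\,dx+C_\varepsilon\,\frac{1}{|B_{2r}(y)|}\int_{B_{2r}(y)}|\mathbf F|^2\,dx,
\]
where $\varepsilon$ can be made as small as we like by choosing the scale $R$ and the VMO modulus bound $\delta$ small. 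For $v$ itself, Theorem \ref{L2.4} gives the crucial $L^\infty$ gradient bound $\sup_{B_{r/2}}|Xv|^2\le C\,r^{-2}\fint_{B_r}|v|^2$, which combined with Poincar\'e and Caccioppoli yields $\sup_{B_{r/2}}|Xv|^2\le C\fint_{B_r}|Xu|^2\,dx + C\fint_{B_r}|\mathbf F|^2\,dx$. Putting these together produces the decomposition $|Xu|^2\le 2|Xv|^2+2|Xw|^2$ with $|Xv|^2$ essentially bounded and $|Xw|^2$ small in $L^1$ average — precisely the ingredients for a Calder\'on--Zygmund-type level set estimate.

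From here the argument is the standard one: one shows that for an appropriate large constant $\Lambda$ and small $\varepsilon$ (hence small $\delta$, $R$), the super-level set $E(s)=\{x\in B_{R_0}:\mathcal M(|Xu|^2)>\Lambda s\}$ satisfies, for all $s$ above a threshold proportional to $\|\mathbf F\|_{L^2}^2+\|u\|_{L^2}^2$ (normalized),
\[
|E(\Lambda s)|\le \varepsilon_1\,|E(s)|+|\{x:\mathcal M(|\mathbf F|^2)>s\}|,
\]
via the Vitali covering lemma applied to the decomposition above, using the weak-$(1,1)$ bound for $\mathcal M$ on the homogeneous-type space $B_{R_0}$. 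Multiplying by $s^{p/2-1}$ and summing a geometric series (the usual ``$L^p$ from good-$\lambda$'' computation), one gets $\|\mathcal M(|Xu|^2)\|_{L^{p/2}(B_{R_0/2})}\lesssim \|\mathcal M(|\mathbf F|^2)\|_{L^{p/2}(B_{R_0})}+\|Xu\|_{L^2(B_{R_0})}^2+\|u\|_{L^2(B_{R_0})}^2$, and then the strong-$(p/2,p/2)$ boundedness of $\mathcal M$ together with $|Xu|^2\le \mathcal M(|Xu|^2)$ a.e.\ gives $\|Xu\|_{L^p(B_{R_0/2})}\lesssim \|\mathbf F\|_{L^p}+\|Xu\|_{L^2}+\|u\|_{L^2}$; controlling $\|Xu\|_{L^2}$ by $\|\mathbf F\|_{L^2}+\|u\|_{L^2}$ via Caccioppoli, absorbing $L^p\le L^2$-type terms, covering $\Omega'$ by finitely many balls, and finally bounding $\|u\|_{L^p(\Omega')}$ using Poincar\'e on each ball, yields \eqref{main estimate}. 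The main obstacle — and the reason the introduction stresses the restriction to Carnot groups — is establishing the constant-coefficient $L^\infty$ gradient estimate and the approximation lemma uniformly in the scale; once Theorem \ref{L2.4} and the \S\ref{sec approx} approximation are in hand, together with the scale-invariance of doubling, Poincar\'e, and Caccioppoli peculiar to Carnot groups, the remaining maximal-function/level-set bookkeeping is routine, though one must be careful that all constants in the good-$\lambda$ inequality are independent of the ball so the covering argument closes.
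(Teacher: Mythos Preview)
Your plan is correct and follows essentially the same Byun--Wang route as the paper: the approximation by constant-coefficient solutions (\S\ref{sec approx}) combined with Theorem~\ref{L2.4} yields the local comparison estimate, which feeds into a level-set decay for $\mathcal M_{B_{11R}}(|Xu|^2)$ (the paper uses a discrete iteration, Theorem~\ref{Thm up to last}, rather than a continuous good-$\lambda$, but this is cosmetic), and then Lemma~\ref{2.3}, Caccioppoli, Poincar\'e, and a covering argument finish. The paper packages the local step as a separate Theorem~\ref{Thm main basic} and stresses, as you do, that $\delta$ must depend only on $R_0$ and not on the scale $R\le R_0$ so that the VMO hypothesis can be invoked.
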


In order to prove Theorem \ref{lp estimate}, we will prove the following local result:

\begin{theorem}
\label{Thm main basic}Under the Assumption (H), for any $\overline{x}\in
\Omega,R_{0}>0$ such that $B_{11R_{0}}\left(  \overline{x}\right)
\subset\Omega$ there exists $\delta=\delta\left(  p,\mathbb{G},R_{0}%
,\mu\right)  >0$ such that for any $R\leq R_{0}$, if the coefficients
$a_{\alpha\beta}^{ij}$ are $\left(  \delta,8R\right)  $-vanishing in
$B_{R}\left(  \overline{x}\right)  $ and $p\in\left(  2,\infty\right)  $, then
there is a positive $c=c\left(  R,R_{0},p,\mathbb{G}\right)  $ such that if
$\mathbf{F}=\left(  f_{\alpha}^{i}\right)  \in L^{p}\left(  B_{11R}\left(
\overline{x}\right)  ;\mathbb{M}^{N\times q}\right)  $ and $u\in
HW^{1,2}\left(  B_{11R}\left(  \overline{x}\right)  ;\mathbb{R}^{N}\right)  $
is a weak solution of (\ref{system}) in $B_{11R}\left(  \overline{x}\right)
$, then $u\in HW^{1,p}\left(  B_{R}\left(  \overline{x}\right)  ;\mathbb{R}%
^{N}\right)  $ and%
\begin{equation}
\left\Vert Xu\right\Vert _{L^{p}\left(  B_{R}\left(  \overline{x}\right)
;\mathbb{R}^{N}\right)  }\leq c\left(  \left\Vert \mathbf{F}\right\Vert
_{L^{p}(B_{11R}\left(  \overline{x}\right)  ;\mathbb{M}^{N\times q}%
)}+\left\Vert Xu\right\Vert _{L^{2}(B_{11R}\left(  \overline{x}\right)
;\mathbb{R}^{N})}\right)  . \label{local L^p}%
\end{equation}

\end{theorem}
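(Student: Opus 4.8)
\textbf{Proof proposal for Theorem \ref{Thm main basic}.}

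The plan is to follow the Byun--Wang strategy adapted to Carnot groups, running a Calder\'on--Zygmund argument on the Hardy--Littlewood maximal function of $\left\vert Xu\right\vert ^{2}$. First I would fix $\overline{x}$ and $R\le R_{0}$ as in the statement, and set $g=\left\vert Xu\right\vert ^{2}$. Working inside the space of homogeneous type $\left(B_{R_{0}}(\overline{x}),d_{X},dx\right)$ (Remark \ref{d-regular and homegeneous} guarantees the doubling property with constants independent of scale), I would invoke the local estimates on the maximal function $\mathcal{M}(g)$ proved in the sections referenced in the introduction (\S\ref{sec maximal}--\ref{section L^p estimates}). The heart of those estimates is a \emph{good-$\lambda$} type inequality: there exist $\varepsilon>0$ and, for every such $\varepsilon$, a threshold $\delta=\delta(\varepsilon,p,\mathbb{G},R_{0},\mu)$ so that whenever the coefficients are $(\delta,8R)$-vanishing in $B_{R}(\overline{x})$, one has a decay estimate of the form
\[
\left\vert \left\{ x\in B_{R}(\overline{x}):\mathcal{M}(g)(x)>N_{1}\lambda\right\} \right\vert \le \varepsilon\,\left\vert \left\{ x\in B_{R}(\overline{x}):\mathcal{M}(g)(x)>\lambda\text{ or }\mathcal{M}(\left\vert\mathbf{F}\right\vert^{2})(x)>\delta^{2}\lambda\right\} \right\vert
\]
for a suitable large constant $N_{1}=N_{1}(\mathbb{G},\mu)$ and all $\lambda>0$. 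The two ingredients feeding this inequality are the approximation result of \S\ref{sec approx} — replacing $u$ locally by the solution $v$ of a constant-coefficient system with the frozen average of the coefficients, controlled in energy by the smallness of the mean oscillation plus the size of $\mathbf{F}$ — and the $L^{\infty}$ gradient bound for $v$ coming from Theorem \ref{L2.4}, which provides the Lipschitz-type compactness that makes the level sets of $\mathcal{M}(g)$ shrink.

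Once the good-$\lambda$ inequality is in hand, I would choose $\varepsilon$ (hence $\delta$) small enough — depending on $p$ — that the standard distributional iteration converts it into an $L^{p/2}$ bound on $\mathcal{M}(g)$. Concretely, multiply by $\lambda^{p/2-1}$, integrate in $\lambda$, and sum a geometric series in powers of $N_{1}$; the choice $\varepsilon N_{1}^{p/2}<1/2$ (say) closes the iteration, and this is exactly why $\delta$ must depend on $p$. This yields
\[
\left\Vert \mathcal{M}(g)\right\Vert _{L^{p/2}(B_{R}(\overline{x}))}\le c\left(\left\Vert \mathcal{M}(\left\vert\mathbf{F}\right\vert^{2})\right\Vert _{L^{p/2}(B_{cR}(\overline{x}))}+\left\Vert g\right\Vert _{L^{1}(B_{cR}(\overline{x}))}\right),
\]
where the last term arises from the base level $\lambda\lesssim \fint g$ in the truncation and $cR$ absorbs the finitely many dilations of the ball needed to apply the interior estimates (this is where the factor $11R$ in the hypothesis is spent). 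Since $\mathcal{M}$ is bounded on $L^{p/2}(\Omega)$ for $p/2>1$ and dominates its argument a.e.\ by Lebesgue differentiation, I would conclude $\left\Vert Xu\right\Vert _{L^{p}(B_{R})}^{2}=\left\Vert g\right\Vert _{L^{p/2}(B_{R})}\lesssim \left\Vert \mathbf{F}\right\Vert _{L^{p}(B_{11R})}^{2}+\left\Vert Xu\right\Vert _{L^{2}(B_{11R})}^{2}$, which after taking square roots and using $\sqrt{a+b}\le\sqrt a+\sqrt b$ is precisely \eqref{local L^p}. A preliminary technical point to dispatch first is that $Xu$ is a priori only in $L^{2}$, so the integrals defining $\mathcal{M}(g)$ in $L^{p/2}$ need a truncation-and-limit argument (replace $g$ by $\min(g,k)$, obtain estimates uniform in $k$, let $k\to\infty$) to make the self-improvement rigorous.

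The main obstacle I anticipate is establishing the good-$\lambda$ inequality itself with the correct uniformity of constants — specifically, carrying out the covering argument of Calder\'on--Zygmund type within the Carnot ball as a space of homogeneous type while keeping the doubling and $d_{X}$-regularity constants independent of $R$ (this uses the dilation invariance noted in Remark \ref{d-regular and homegeneous}), and ensuring that the comparison of $u$ with the constant-coefficient solution $v$ is quantitative in $\delta$. The delicate interplay is that the compactness/$L^{\infty}$-gradient input from Theorem \ref{L2.4} is only available after one has frozen the coefficients, so the approximation lemma of \S\ref{sec approx} must yield a genuinely small energy gap whenever both the datum $\mathbf{F}$ and the oscillation of the coefficients are small; normalizing the solution (dividing by its energy) and arguing by contradiction, using Caccioppoli's inequality \eqref{Caccioppoli} and weak compactness in $HW^{1,2}$, is the route I would take there. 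Everything else — the measure-theoretic iteration, the maximal function bounds, the final assembly — is routine once these two lemmas are secured.
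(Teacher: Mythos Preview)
Your overall strategy matches the paper's: the Byun--Wang maximal-function scheme, the approximation by constant-coefficient solutions from \S\ref{sec approx}, the $L^{\infty}$ gradient bound of Theorem~\ref{L2.4}, a level-set decay (the paper packages this discretely as Theorem~\ref{Thm up to last}, obtained from Corollary~\ref{corollary 3.10} plus the Vitali-type Lemma~\ref{Lemma Vitali bis}, which is equivalent to your good-$\lambda$ inequality), and the choice of $\varepsilon$ with $N_{1}^{p}\varepsilon_{1}<1$ to sum the geometric series via Lemma~\ref{2.3}.

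The one step that would not go through as written is your truncation device. Replacing $g=\left\vert Xu\right\vert^{2}$ by $\min(g,k)$ destroys the PDE structure: the level-set decay is proved by locally comparing $u$ with a solution $v$ of a frozen-coefficient system, and $\min(\left\vert Xu\right\vert^{2},k)$ is not $\left\vert Xw\right\vert^{2}$ for any solution $w$, so the good-$\lambda$ inequality does not transfer to the truncated function and you cannot get estimates uniform in $k$ this way. The paper sidesteps the a~priori finiteness issue by \emph{scaling} rather than truncating: set $u_{\lambda}=u/\lambda$, $\mathbf{F}_{\lambda}=\mathbf{F}/\lambda$ (which preserves the system), and choose $\lambda$ large --- explicitly in terms of $\left\Vert Xu\right\Vert_{L^{2}(B_{11R})}$ and $\left\Vert\mathbf{F}\right\Vert_{L^{p}(B_{11R})}$ --- so that the starting hypothesis $\left\vert\{\mathcal{M}_{B_{11R}}(\left\vert Xu_{\lambda}\right\vert^{2})>N_{1}^{2}\}\cap B_{R}\right\vert<\varepsilon\left\vert B_{R}\right\vert$ of Theorem~\ref{Thm up to last} holds and the series $\sum_{k\ge1}N_{1}^{kp}\left\vert\{\mathcal{M}_{B_{11R}}(\left\vert\mathbf{F}_{\lambda}\right\vert^{2})>\delta^{2}N_{1}^{2k}\}\right\vert$ is already $\le1$. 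The discrete iteration then bounds $\sum_{k}N_{1}^{kp}\left\vert\{\mathcal{M}_{B_{11R}}(\left\vert Xu_{\lambda}\right\vert^{2})>N_{1}^{2k}\}\right\vert$ directly by a finite quantity, with no absorption step, and Lemma~\ref{2.3} converts this to the $L^{p/2}$ bound on the maximal function. Swap your truncation for this scaling and the rest of your outline is exactly the paper's proof.
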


\begin{proof}
[Proof of Theorem \ref{lp estimate} from Theorem \ref{Thm main basic}]For
fixed domains $\Omega^{\prime}\Subset\Omega^{\prime\prime}\Subset\Omega$, pick
$R_{0}$ such that $B_{12R_{0}}\left(  \overline{x}\right)  \subset
\Omega^{\prime\prime}$ for any $\overline{x}\in\Omega^{\prime}$. For this
$R_{0}$ and a fixed $p\in\left(  2,\infty\right)  ,$ let $\delta$ be like in
Theorem \ref{Thm main basic}. Since the $a_{\alpha\beta}^{ij}$'s belong to
$VMO_{loc}\left(  \Omega\right)  $, there exists $R\leq R_{0}$, $R$ depending
on $\Omega$, $\Omega^{\prime}$, $R_{0}$, $\delta,$ such that the
$a_{\alpha\beta}^{ij}$'s are $\left(  \delta,8R\right)  $-vanishing in
$B_{R}\left(  \overline{x}\right)  $. Therefore by Theorem
\ref{Thm main basic}, (\ref{local L^p}) holds for any such $\overline{x}$ and
$R$. Next, we apply Caccioppoli's inequality (\ref{Caccioppoli}), getting%
\[
\left\Vert Xu\right\Vert _{L^{2}(B_{11R}\left(  \overline{x}\right)
;\mathbb{R}^{N})}\leq c\left\{  \frac{1}{R}\left\Vert u\right\Vert
_{L^{2}(B_{12R}\left(  \overline{x}\right)  ;\mathbb{R}^{N})}+\left\Vert
\mathbf{F}\right\Vert _{L^{2}(B_{12R}\left(  \overline{x}\right)
;\mathbb{M}^{N\times q})}\right\}  ,
\]
which inserted in (\ref{local L^p}) gives%
\begin{equation}
\left\Vert Xu\right\Vert _{L^{p}\left(  B_{R}\left(  \overline{x}\right)
;\mathbb{R}^{N}\right)  }\leq c\left(  R\right)  \left\{  \left\Vert
\mathbf{F}\right\Vert _{L^{p}(B_{12R}\left(  \overline{x}\right)
;\mathbb{M}^{N\times q})}+\left\Vert u\right\Vert _{L^{2}(B_{12R}\left(
\overline{x}\right)  ;\mathbb{R}^{N})}\right\}  . \label{local 1}%
\end{equation}

On the other hand, by Poincar\'{e} inequality (\ref{poincare inequality}) we
have:%
\begin{align*}
\left\Vert u^{\alpha}\right\Vert _{L^{p}\left(  B_{R}\left(  \overline
{x}\right)  \right)  }  &  \leq\left\Vert u^{\alpha}-u_{B_{R}\left(
\overline{x}\right)  }^{\alpha}\right\Vert _{L^{p}\left(  B_{R}\left(
\overline{x}\right)  \right)  }+\left\vert u_{B_{R}\left(  \overline
{x}\right)  }^{\alpha}\right\vert \left\vert B_{R}\left(  \overline{x}\right)
\right\vert ^{1/p}\\
&  \leq cR\left\Vert Xu^{\alpha}\right\Vert _{L^{p}\left(  B_{R}\left(
\overline{x}\right)  \right)  }+\left\Vert u^{\alpha}\right\Vert
_{L^{2}\left(  B_{R}\left(  \overline{x}\right)  \right)  }\left\vert
B_{R}\left(  \overline{x}\right)  \right\vert ^{1/p-1/2},
\end{align*}
hence%
\[
\left\Vert u\right\Vert _{L^{p}\left(  B_{R}\left(  \overline{x}\right)
;\mathbb{R}^{N}\right)  }\leq c\left(  R,p\right)  \left\{  \left\Vert
Xu\right\Vert _{L^{p}\left(  B_{R}\left(  \overline{x}\right)  ;\mathbb{R}%
^{N}\right)  }+\left\Vert u\right\Vert _{L^{2}\left(  B_{R}\left(
\overline{x}\right)  ;\mathbb{R}^{N}\right)  }\right\}  ,
\]
which together with (\ref{local 1}) gives
\[
\left\Vert u\right\Vert _{HW^{1,p}\left(  B_{R}\left(  \overline{x}\right)
;\mathbb{R}^{N}\right)  }\leq c\left\{  \left\Vert \mathbf{F}\right\Vert
_{L^{p}(B_{12R}\left(  \overline{x}\right)  ;\mathbb{M}^{N\times q}%
)}+\left\Vert u\right\Vert _{L^{2}(B_{12R}\left(  \overline{x}\right)
;\mathbb{R}^{N})}\right\}  .
\]
A covering argument then gives (\ref{main estimate}).
\end{proof}

It is worthwhile to point out that, as we will see from the proof of Theorem
\ref{Thm main basic} in \S \ \ref{section L^p estimates}, the following bound,
stronger than (\ref{local L^p}), is actually established:%
\begin{equation}
\left\Vert \mathcal{M}_{B_{11R}\left(  \overline{x}\right)  }\left(
\left\vert Xu\right\vert ^{2}\right)  \right\Vert _{L^{p/2}\left(
B_{R}\left(  \overline{x}\right)  ;\mathbb{R}^{N}\right)  }^{1/2}\leq
c\left\{  \left\Vert \mathbf{F}\right\Vert _{L^{p}(B_{12R}\left(  \overline
{x}\right)  ;\mathbb{M}^{N\times q})}+\left\Vert u\right\Vert _{L^{2}%
(B_{12R}\left(  \overline{x}\right)  ;\mathbb{R}^{N})}\right\}  ,
\label{maximal lp}%
\end{equation}
where $\mathcal{M}$ is the Hardy-Littewood maximal function (see
\S \ref{sec maximal}).

\begin{remark}
Note that what allows to exploit the VMO assumption on the coefficients is the
fact that the number $\delta$ in Theorem \ref{Thm main basic} depends on
$R_{0}$ but not on $R\leq R_{0},$ which allows shrinking $R$ without changing
$\delta,$ to get the $\left(  \delta,R\right)  $-vanishing condition
satisfied. Under this regard, our result is very different from those proved
for instance in \cite{SS b}, \cite{ss b1} where the parameter $\delta$
possibly depends on $R$, which makes the $\left(  \delta,R\right)  $-vanishing
assumption hard to check.
\end{remark}

\textbf{Dependence of constants. }Throughout this paper, the letter $c$
denotes a constant which may vary from line to line. The parameters which the
constants depend on are declared in the statements or in the proofs of the
theorems. When we write that $c$ is an \textquotedblleft absolute
constant\textquotedblright\ we mean that it may depend on $\mathbb{G}$ and $N$.

\section{Approximation by solutions of systems with constant
coefficients\label{sec approx}}

\begin{notation}
In order to simplify notation, henceforth we will systematically write the
norms and spaces of vector valued functions as%
\begin{align*}
&  HW^{1,p}\left(  B\right)  ,\left\Vert u\right\Vert _{HW^{1,p}\left(
B\right)  },\left\Vert \mathbf{F}\right\Vert _{L^{p}\left(  B\right)  }\text{
instead of }\\
&  HW^{1,p}\left(  B;\mathbb{R}^{N}\right)  ,\left\Vert u\right\Vert
_{HW^{1,p}\left(  B;\mathbb{R}^{N}\right)  },\left\Vert \mathbf{F}\right\Vert
_{L^{p}\left(  B;\mathbb{M}^{N\times q}\right)  }\text{,}%
\end{align*}
and so on.
\end{notation}

In this section we will prove a couple of theorems asserting that a solution
to a system (\ref{system}) with small datum $\mathbf{F}$ and coefficients with
small oscillation, can be suitably approximated by a solution to a system with
constant coefficients and zero datum. This approximation is one of the tools
which will be used in the proof of Theorem \ref{Thm main basic}.

\begin{theorem}
\label{aproximate for u}Under Assumption (H) (see \S \ref{Assumption}), for
any $\varepsilon>0,$ $R_{0}>0$ there is a small $\delta=\delta\left(
\varepsilon,R_{0},\mu\right)  >0$ such that for any $R\leq R_{0}$, if
$u\,\ $is a weak solution$\ $to the system (\ref{system}) in $B_{4R}%
\Subset\Omega$ with%
\begin{equation}
\frac{1}{\left\vert B_{4R}\right\vert }\int_{B_{4R}}\left\vert Xu\right\vert
^{2}dx\leq1,\text{ \ }\frac{1}{\left\vert B_{4R}\right\vert }\int_{B_{4R}%
}\left(  \left\vert \mathbf{F}\right\vert ^{2}+\left\vert a_{\alpha\beta}%
^{ij}-\left(  a_{\alpha\beta}^{ij}\right)  _{B_{4R}}\right\vert ^{2}\right)
dx\leq\delta^{2}, \label{3.1}%
\end{equation}
then there exists a weak solution $v$ to the following homogeneous system with
constant coefficients:
\begin{equation}
X_{i}\left(  \left(  a_{\alpha\beta}^{ij}\right)  _{B_{4R}}X_{j}v^{\beta
}(x)\right)  =0\text{ in }B_{4R} \label{3.2}%
\end{equation}
such that%
\[
\frac{1}{R^{2}}\frac{1}{\left\vert B_{4R}\right\vert }\int_{B_{4R}}\left\vert
u-v\right\vert ^{2}dx\leq\varepsilon^{2}.
\]

\end{theorem}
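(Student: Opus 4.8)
The plan is to argue by contradiction via a compactness (normal families) argument, which is the standard route for such approximation lemmas. Suppose the statement fails for some $\varepsilon_0 > 0$ and $R_0 > 0$: then for every $k \in \mathbb{N}$ there exist $R_k \le R_0$, coefficients $\{a_{\alpha\beta,k}^{ij}\}$ satisfying the strong Legendre condition \eqref{elliptic condition} with the \emph{same} $\mu$, data $\mathbf{F}_k$, and weak solutions $u_k$ to \eqref{system} on $B_{4R_k}$, satisfying the normalization and smallness hypotheses \eqref{3.1} with $\delta = 1/k$, yet such that
\[
\frac{1}{R_k^2}\,\frac{1}{|B_{4R_k}|}\int_{B_{4R_k}}|u_k - v|^2\,dx > \varepsilon_0^2
\]
for \emph{every} weak solution $v$ of the constant-coefficient system \eqref{3.2} on $B_{4R_k}$ with coefficients $(a_{\alpha\beta,k}^{ij})_{B_{4R_k}}$. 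First I would rescale to a fixed ball: using the $1$-homogeneity of $d_X$ and left-invariance, set $\tilde{u}_k(x) = R_k^{-1} u_k(D(R_k)x)$ (recentering so the ball is at the origin), $\tilde{\mathbf{F}}_k(x) = \mathbf{F}_k(D(R_k)x)$, $\tilde{a}_{\alpha\beta,k}^{ij}(x) = a_{\alpha\beta,k}^{ij}(D(R_k)x)$. Since the $X_i$ are $1$-homogeneous, $\tilde{u}_k$ is a weak solution on the fixed ball $B_4$ with coefficients $\tilde{a}_k$, and by scale-invariance of the averaged quantities \eqref{3.1} becomes $\fint_{B_4}|X\tilde{u}_k|^2 \le 1$ and $\fint_{B_4}(|\tilde{\mathbf{F}}_k|^2 + |\tilde a_{\alpha\beta,k}^{ij} - (\tilde a_{\alpha\beta,k}^{ij})_{B_4}|^2)\,dx \le 1/k^2$; the conclusion to be contradicted becomes $\fint_{B_4}|\tilde u_k - \tilde v|^2 > \varepsilon_0^2$ for all constant-coefficient solutions $\tilde v$. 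We may also normalize $\fint_{B_4}\tilde u_k = 0$ (subtracting a constant changes neither hypotheses nor conclusion), so that by Poincaré \eqref{poincare inequality} $\|\tilde u_k\|_{L^2(B_4)} \le C\|X\tilde u_k\|_{L^2(B_4)} \le C$, giving a uniform $HW^{1,2}(B_4)$ bound.

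Next I would extract limits. By the uniform $HW^{1,2}(B_4)$ bound, up to a subsequence $\tilde u_k \rightharpoonup u_\infty$ weakly in $HW^{1,2}$, and by the Rellich-type compact embedding $HW^{1,2}(B_4) \hookrightarrow\hookrightarrow L^2$ on a space of homogeneous type (available here since $B_4$ is a doubling metric measure space and Poincaré holds — cf.\ the Poincaré inequality quoted and the framework of \cite{J}), $\tilde u_k \to u_\infty$ strongly in $L^2(B_4)$. The constant matrices $(\tilde a_{\alpha\beta,k}^{ij})_{B_4}$ lie in the compact set of matrices satisfying \eqref{elliptic condition} (a closed bounded set), so up to a further subsequence they converge to a constant matrix $A_\infty = (\bar a_{\alpha\beta}^{ij})$ still satisfying \eqref{elliptic condition}. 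The key point is that the smallness hypothesis forces $\tilde a_{\alpha\beta,k}^{ij} \to \bar a_{\alpha\beta}^{ij}$ strongly in $L^2(B_4)$: indeed $\|\tilde a^{ij}_{\alpha\beta,k} - (\tilde a^{ij}_{\alpha\beta,k})_{B_4}\|_{L^2(B_4)} \to 0$ and $(\tilde a^{ij}_{\alpha\beta,k})_{B_4} \to \bar a^{ij}_{\alpha\beta}$. Passing to the limit in the weak formulation $\int_{B_4} \tilde a_{\alpha\beta,k}^{ij} X_j \tilde u_k^\beta X_i\varphi^\alpha = \int_{B_4}\tilde f_{\alpha,k}^i X_i\varphi^\alpha$ for $\varphi \in HW_0^{1,2}$: the left side splits as $\int (\tilde a_{\alpha\beta,k}^{ij} - \bar a_{\alpha\beta}^{ij}) X_j\tilde u_k^\beta X_i\varphi^\alpha + \int \bar a_{\alpha\beta}^{ij} X_j\tilde u_k^\beta X_i\varphi^\alpha$; the first term $\to 0$ by the strong $L^2$ convergence of the coefficients against the bounded $|X\tilde u_k| \cdot |X\varphi|$ (using $\varphi$ with bounded $X\varphi$, then density), and the second $\to \int \bar a_{\alpha\beta}^{ij} X_j u_\infty^\beta X_i\varphi^\alpha$ by weak convergence; the right side $\to 0$ since $\tilde{\mathbf{F}}_k \to 0$ in $L^2$. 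Hence $u_\infty$ is a weak solution of the constant-coefficient system $X_i(\bar a_{\alpha\beta}^{ij} X_j u_\infty^\beta) = 0$ on $B_4$.

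To finish, I would choose $\tilde v = v_k$ to be the solution of the constant-coefficient system with coefficients $(\tilde a_{\alpha\beta,k}^{ij})_{B_4}$ on $B_4$ having boundary data $v_k - u_\infty \in HW_0^{1,2}(B_4)$ (existence and the energy estimate \eqref{a priori Hilbert} given in Remark~\ref{existence}). The difference $w_k := v_k - u_\infty$ solves $X_i((\tilde a_{\alpha\beta,k}^{ij})_{B_4} X_j w_k^\beta) = -X_i\big(((\tilde a_{\alpha\beta,k}^{ij})_{B_4} - \bar a_{\alpha\beta}^{ij}) X_j u_\infty^\beta\big)$ with zero boundary data, so by the energy estimate and ellipticity $\|X w_k\|_{L^2(B_4)} \le C\,|(\tilde a_k)_{B_4} - A_\infty|\cdot\|Xu_\infty\|_{L^2(B_4)} \to 0$, whence by Poincaré \eqref{poincare inequality 1} $\|w_k\|_{L^2(B_4)} \to 0$, i.e.\ $v_k \to u_\infty$ in $L^2(B_4)$. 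Combined with $\tilde u_k \to u_\infty$ in $L^2(B_4)$ we get $\fint_{B_4}|\tilde u_k - v_k|^2 \to 0$, contradicting $\fint_{B_4}|\tilde u_k - v_k|^2 > \varepsilon_0^2$. The main obstacle I anticipate is making the limit passage in the weak formulation fully rigorous — in particular justifying the vanishing of the "bad" coefficient term $\int (\tilde a_k^{ij} - \bar a^{ij}) X_j\tilde u_k^\beta X_i\varphi^\alpha$, which requires care because $X_j\tilde u_k$ only converges weakly; one handles it by first taking test functions $\varphi$ with $X\varphi \in L^\infty$ (so the product $X_j\tilde u_k^\beta X_i\varphi^\alpha$ is bounded in $L^2$ and the strong $L^2$ convergence $\tilde a_k \to \bar a$ kills it, e.g.\ via Cauchy–Schwarz and the uniform bound on $X\tilde u_k$), and then invoking a density argument to reach all $\varphi \in HW_0^{1,2}$. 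The other point needing a citation is the compact embedding $HW^{1,2}(B_4)\hookrightarrow\hookrightarrow L^2(B_4)$, which follows from the doubling property (Remark~\ref{d-regular and homegeneous}) together with Jerison's Poincaré inequality.
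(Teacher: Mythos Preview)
Your proof is correct and follows essentially the same compactness/contradiction strategy as the paper: extract weak $HW^{1,2}$ and strong $L^2$ limits of the (mean-normalized) $u_k$, pass to the limit in the weak formulation to identify $u_\infty$ as a solution of the limiting constant-coefficient system, then build the comparison solutions $v_k$ with boundary datum $u_\infty$ and use the energy estimate plus Poincar\'{e} to get $v_k\to u_\infty$ in $L^2$, yielding the contradiction. The only organizational difference is that you rescale each $B_{4R_k}$ to the fixed ball $B_4$ at the outset, whereas the paper first runs the compactness argument at a fixed radius $R$ (obtaining $\delta=\delta(\varepsilon,R,\mu)$) and only afterwards uses the dilation $D(R/R_0)$ to remove the $R$-dependence; your ordering is slightly more economical but the substance is identical.
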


\begin{proof}
Let us first prove the result for a fixed $R$ (and $\delta$ possibly depending
on $R$), then we will show how to remove the dependence on $R$.

By contradiction. If the result does not hold, then there exist a constant
$\varepsilon_{0}>0$, and sequences $\left\{  a_{\alpha\beta}^{ijk}\right\}
_{k=1}^{\infty}$ satisfying (\ref{elliptic condition}), $\left\{
u_{k}\right\}  _{k=1}^{\infty}$, $\left\{  \mathbf{F}_{k}\right\}
_{k=1}^{\infty}$ such that $u_{k}$ is a weak solution to the system%
\begin{equation}
X_{i}\left(  a_{\alpha\beta}^{ijk}X_{j}u_{k}^{\beta}(x)\right)  =X_{i}%
f_{\alpha}^{ik}\left(  x\right)  \text{ } \label{equation of aproximate}%
\end{equation}
in $B_{4R}$ with
\begin{equation}
\frac{1}{\left\vert B_{4R}\right\vert }\int_{B_{4R}}\left\vert Xu_{k}%
\right\vert ^{2}dx\leq1,\frac{1}{\left\vert B_{4}\right\vert }\int_{B_{4R}%
}\left(  \left\vert \mathbf{F}_{k}\right\vert ^{2}+\left\vert a_{\alpha\beta
}^{ijk}-\left(  a_{\alpha\beta}^{ijk}\right)  _{B_{4R}}\right\vert
^{2}\right)  dx\leq\frac{1}{k^{2}}, \label{bounded}%
\end{equation}
but
\begin{equation}
\frac{1}{R^{2}}\frac{1}{\left\vert B_{4R}\right\vert }\int_{B_{4R}}\left\vert
u_{k}-v_{k}\right\vert ^{2}dx>\varepsilon_{0}^{2} \label{3.6}%
\end{equation}
for any weak solution $v_{k}$ of
\begin{equation}
X_{i}\left(  \left(  a_{\alpha\beta}^{ijk}\right)  _{B_{4R}}X_{j}v_{k}^{\beta
}(x)\right)  =0\text{ in }B_{4R}. \label{system const}%
\end{equation}

From (\ref{bounded}) and Poincar\'{e}'s inequality (\ref{poincare inequality}%
), we know that $\left\{  u_{k}-\left(  u_{k}\right)  _{B_{4R}}\right\}
_{k=1}^{\infty}$ is bounded in $HW^{1,2}\left(  B_{4R}\right)  $, then
Rellich's lemma allows us to find a subsequence of $\left\{  u_{k}-\left(
u_{k}\right)  _{B_{4R}}\right\}  $, still denoted by $\left\{  u_{k}-\left(
u_{k}\right)  _{B_{4R}}\right\}  $, such that%
\begin{align}
\frac{1}{R^{2}}\frac{1}{\left\vert B_{4R}\right\vert }\int_{B_{4R}}\left\vert
u_{k}-\left(  u_{k}\right)  _{B_{4R}}-u_{0}\right\vert ^{2}dx  &
\rightarrow0\text{,}\label{3.8}\\
Xu_{k}  &  \rightarrow Xu_{0}\text{ weakly in }L^{2}\text{, } \label{add1}%
\end{align}
as $k\rightarrow\infty$, for some $u_{0}\in HW^{1,2}\left(  B_{4R}\right)  $.
Since $\left\{  \left(  a_{\alpha\beta}^{ijk}\right)  _{B_{4R}}\right\}
_{k=1}^{\infty}$ is bounded in $\mathbb{R}$, it allows a subsequence, still
denoted by $\left\{  \left(  a_{\alpha\beta}^{ijk}\right)  _{B_{4R}}\right\}
_{k=1}^{\infty}$, such that%
\begin{equation}
\left\vert \left(  a_{\alpha\beta}^{ijk}\right)  _{B_{4R}}-\bar{a}%
_{\alpha\beta}^{ij}\right\vert \rightarrow0,\text{ as }k\rightarrow\infty,
\label{3.9}%
\end{equation}
for some constants $\bar{a}_{\alpha\beta}^{ij}$. By (\ref{bounded}), it
follows%
\[
a_{\alpha\beta}^{ijk}\rightarrow\bar{a}_{\alpha\beta}^{ij}\text{ in }%
L^{2}\left(  B_{4R}\right)  ,\text{ as }k\rightarrow\infty.
\]

Next, we show that $u_{0}$ is a weak solution of%
\begin{equation}
X_{i}\left(  \bar{a}_{\alpha\beta}^{ij}X_{j}u^{\beta}(x)\right)  =0\text{ in
}B_{4R}. \label{3.11}%
\end{equation}
We start from%
\begin{equation}
\int_{B_{4R}}a_{\alpha\beta}^{ijk}\left(  x\right)  X_{j}u_{k}^{\beta}%
X_{i}\varphi^{\alpha}dx=\int_{B_{4R}}f_{\alpha}^{ik}X_{i}\varphi^{\alpha}dx
\label{3.12}%
\end{equation}
with $\varphi^{\alpha}\in C_{0}^{\infty}\left(  \Omega\right)  $, and take the
limit for $k\rightarrow\infty.$ By (\ref{bounded}),
\[
\int_{B_{4R}}f_{\alpha}^{ik}X_{i}\varphi^{\alpha}dx\rightarrow0.
\]
Moreover,%
\begin{align*}
&  \int_{B_{4R}}a_{\alpha\beta}^{ijk}\left(  x\right)  X_{j}u_{k}^{\beta}%
X_{i}\varphi^{\alpha}dx=\\
&  =\int_{B_{4R}}\left[  a_{\alpha\beta}^{ijk}\left(  x\right)  -\bar
{a}_{\alpha\beta}^{ij}\right]  X_{j}u_{k}^{\beta}X_{i}\varphi^{\alpha}%
dx+\int_{B_{4R}}\bar{a}_{\alpha\beta}^{ij}X_{j}u_{k}^{\beta}X_{i}%
\varphi^{\alpha}dx\equiv A_{k}+B_{k}.
\end{align*}
Now,%
\[
\left\vert A_{k}\right\vert \leq c\left\Vert a_{\alpha\beta}^{ijk}\left(
x\right)  -\bar{a}_{\alpha\beta}^{ij}\right\Vert _{L^{2}\left(  B_{4R}\right)
}\left\Vert X_{j}u_{k}^{\beta}\right\Vert _{L^{2}\left(  B_{4R}\right)
}\rightarrow0,
\]
because $a_{\alpha\beta}^{ijk}\left(  x\right)  \rightarrow\bar{a}%
_{\alpha\beta}^{ij}$ in $L^{2}$ and $\left\{  X_{j}u_{k}^{\beta}\right\}  $ is
bounded in $L^{2}$. Finally, since $Xu_{k}\rightarrow Xu_{0}$ weakly in
$L^{2},$%
\[
B_{k}\rightarrow\int_{B_{4R}}\bar{a}_{\alpha\beta}^{ij}X_{j}u_{0}^{\beta}%
X_{i}\varphi^{\alpha}dx,
\]
hence%
\[
\int_{B_{4R}}\bar{a}_{\alpha\beta}^{ij}X_{j}u_{0}^{\beta}X_{i}\varphi^{\alpha
}dx=0\text{ for any }\varphi^{\alpha}\in C_{0}^{\infty}\left(  B_{4R}\right)
.
\]
By density, this holds for any $\varphi^{\alpha}\in HW_{0}^{1,2}\left(
B_{4R}\right)  $, so $u_{0}$\ is a weak solution to (\ref{3.11}).

Now, let $v_{k}$ be the unique solution to the Dirichlet problem%
\begin{equation}
\left\{
\begin{array}
[c]{l}%
X_{i}\left(  \left(  a_{\alpha\beta}^{ijk}\right)  _{B_{4R}}X_{j}v_{k}\right)
=0\text{ \ in }B_{4R}\\
v_{k}-u_{0}\in HW_{0}^{1,2}\left(  B_{4R}\right)
\end{array}
\right.  \label{aux system}%
\end{equation}
(see Remark \ref{existence}). By (\ref{elliptic condition}) and using
$v_{k}-u_{0}$ as a test function in the definition of solution to
(\ref{aux system}) we have%
\begin{align*}
\mu\int_{B_{4R}}\left\vert Xv_{k}-Xu_{0}\right\vert ^{2}dx  &  \leq
\int_{B_{4R}}\left(  a_{\alpha\beta}^{ijk}\right)  _{B_{4R}}\left(  X_{j}%
v_{k}^{\beta}-X_{j}u_{0}^{\beta}\right)  \left(  X_{i}v_{k}^{\alpha}%
-X_{i}u_{0}^{\alpha}\right)  dx\\
&  =-\int_{B_{4R}}\left(  a_{\alpha\beta}^{ijk}\right)  _{B_{4R}}X_{j}%
u_{0}^{\beta}\left(  X_{i}v_{k}^{\alpha}-X_{i}u_{0}^{\alpha}\right)  dx,
\end{align*}
since $u_{0}$ is a weak solution to (\ref{3.11})
\begin{align*}
&  =\int_{B_{4R}}\left(  \bar{a}_{\alpha\beta}^{ij}-\left(  a_{\alpha\beta
}^{ijk}\right)  _{B_{4R}}\right)  X_{j}u_{0}^{\beta}\left(  X_{i}v_{k}%
^{\alpha}-X_{i}u_{0}^{\alpha}\right)  dx\\
&  \leq\left\vert \left(  \bar{a}_{\alpha\beta}^{ij}-\left(  a_{\alpha\beta
}^{ijk}\right)  _{B_{4R}}\right)  \right\vert \int_{B_{4R}}\left\vert
X_{j}u_{0}^{\beta}\right\vert \left\vert X_{i}v_{k}^{\alpha}-X_{i}%
u_{0}^{\alpha}\right\vert dx\\
&  \leq c\left(  N\right)  \max_{i,j,\alpha,\beta}\left\vert \left(  \bar
{a}_{\alpha\beta}^{ij}-\left(  a_{\alpha\beta}^{ijk}\right)  _{B_{4R}}\right)
\right\vert \left(  \int_{B_{4R}}\left\vert Xu_{0}\right\vert ^{2}dx\right)
^{1/2}\cdot\\
&  \cdot\left(  \int_{B_{4R}}\left\vert Xv_{k}-Xu_{0}\right\vert
^{2}dx\right)  ^{1/2},
\end{align*}
which implies%
\begin{equation}
\mu\left(  \int_{B_{4R}}\left\vert Xv_{k}-Xu_{0}\right\vert ^{2}dx\right)
^{1/2}\leq c\max_{i,j,\alpha,\beta}\left\vert \left(  \bar{a}_{\alpha\beta
}^{ij}-\left(  a_{\alpha\beta}^{ijk}\right)  _{B_{4R}}\right)  \right\vert
\left(  \int_{B_{4R}}\left\vert Xu_{0}\right\vert ^{2}dx\right)  ^{1/2}.
\label{3.9b}%
\end{equation}
Inequalities (\ref{3.9}) and (\ref{3.9b}) imply
\[
\left\Vert Xv_{k}-Xu_{0}\right\Vert _{L^{2}\left(  B_{4R}\right)  }%
\rightarrow0\text{ as }k\rightarrow0.
\]
This convergence, the fact that $v_{k}-u_{0}\in HW_{0}^{1,2}\left(
B_{4R}\right)  $ and (\ref{poincare inequality 1}) imply%
\begin{equation}
\left\Vert v_{k}-u_{0}\right\Vert _{L^{2}\left(  B_{4R}\right)  }%
\rightarrow0\text{ as }k\rightarrow0. \label{v_k - u0}%
\end{equation}
By (\ref{3.8}) and (\ref{v_k - u0}) we can write:%
\begin{align}
&  \left\Vert v_{k}-\left(  u_{k}-\left(  u_{k}\right)  _{B_{4R}}\right)
\right\Vert _{L^{2}\left(  B_{4R}\right)  }\nonumber\\
&  \leq\left\Vert u_{0}-\left(  u_{k}-\left(  u_{k}\right)  _{B_{4R}}\right)
\right\Vert _{L^{2}\left(  B_{4R}\right)  }+\left\Vert v_{k}-u_{0}\right\Vert
_{L^{2}\left(  B_{4R}\right)  }\rightarrow0. \label{vanish}%
\end{align}
On the other hand, $v_{k}+\left(  u_{k}\right)  _{B_{4R}}$ is still a weak
solution to (\ref{system const}), hence (\ref{3.6}) implies%
\[
\frac{1}{R^{2}}\frac{1}{\left\vert B_{4R}\right\vert }\int_{B_{4R}}\left\vert
v_{k}-\left(  u_{k}-\left(  u_{k}\right)  _{B_{4R}}\right)  \right\vert
^{2}dx>\varepsilon_{0}^{2},
\]
which contradicts (\ref{vanish}). So we have proved the assertion, for some
$\delta$ possibly depending on $\varepsilon,R,\mu$.

Let us now fix a particular $R_{0},$ and let $R$ be any number $\leq R_{0}$.
Assume $u\,\ $is a weak solution$\ $to system (\ref{system}) in $B_{4R}%
\Subset\Omega$ satisfying (\ref{3.1}). Just to simplify notations, assume that
the center of $B_{4R}$ is the origin, and define:%
\begin{align*}
\widetilde{u}\left(  x\right)   &  =\frac{R_{0}}{R}u\left(  D\left(  \frac
{R}{R_{0}}\right)  x\right)  ;\\
\widetilde{a}_{ij}^{\alpha\beta}\left(  x\right)   &  =a_{ij}^{\alpha\beta
}\left(  D\left(  \frac{R}{R_{0}}\right)  x\right)  ;\\
\widetilde{f_{\alpha}^{i}}\left(  x\right)   &  =f_{\alpha}^{i}\left(
D\left(  \frac{R}{R_{0}}\right)  x\right)  .
\end{align*}
Then, one can check that the function $\widetilde{u}$ solves the system
\[
X_{i}\left(  \widetilde{a}_{\alpha\beta}^{ij}(x)X_{j}\widetilde{u}^{\beta
}\right)  =X_{i}\widetilde{f_{\alpha}^{i}}\text{ in }B_{4R_{0}}.
\]
To see this, for any $\phi\in C_{0}^{\infty}\left(  B_{4R}\right)  $, let
$\widetilde{\phi}\left(  x\right)  =\frac{R_{0}}{R}\phi\left(  D\left(
\frac{R}{R_{0}}\right)  x\right)  $; then $\widetilde{\phi}\in C_{0}^{\infty
}\left(  B_{4R_{0}}\right)  $ and%
\begin{align*}
&  \int_{B_{4R_{0}}}\widetilde{a}_{ij}^{\alpha\beta}\left(  x\right)
X_{j}\widetilde{u}^{\beta}\left(  x\right)  X_{i}\widetilde{\phi}^{\alpha
}\left(  x\right)  dx\\
&  =\int_{B_{4R_{0}}}a_{ij}^{\alpha\beta}\left(  D\left(  \frac{R}{R_{0}%
}\right)  x\right)  \left(  X_{j}u^{\beta}\right)  \left(  D\left(  \frac
{R}{R_{0}}\right)  x\right)  \left(  X_{i}\phi^{\alpha}\right)  \left(
D\left(  \frac{R}{R_{0}}\right)  x\right)  dx\\
&  =\left(  \frac{R_{0}}{R}\right)  ^{Q}\int_{B_{4R}}a_{ij}^{\alpha\beta
}\left(  y\right)  \left(  X_{j}u^{\beta}\right)  \left(  y\right)  \left(
X_{i}\phi^{\alpha}\right)  \left(  y\right)  dy\\
&  =\left(  \frac{R_{0}}{R}\right)  ^{Q}\int_{B_{4R}}f_{\alpha}^{i}\left(
y\right)  \left(  X_{i}\phi^{\alpha}\right)  \left(  y\right)  dy\\
&  =\int_{B_{4R_{0}}}f_{\alpha}^{i}\left(  D\left(  \frac{R}{R_{0}}\right)
x\right)  \left(  X_{i}\phi^{\alpha}\right)  \left(  D\left(  \frac{R}{R_{0}%
}\right)  x\right)  dx\\
&  =\int_{B_{4R_{0}}}\widetilde{f}_{i}^{\alpha}\left(  x\right)
X_{i}\widetilde{\phi}^{\alpha}\left(  x\right)  dx.
\end{align*}
Also, note that the $\widetilde{a}_{\alpha\beta}^{ij}$'s satisfy condition
(\ref{elliptic condition}) with the same $\mu$. Let $\delta=\delta\left(
\varepsilon,R_{0},\mu\right)  $ be the number found in the first part of the
proof, and assume that $u,\mathbf{F},a_{ij}^{\alpha\beta}$ satisfy (\ref{3.1})
on $B_{4R}$ for this $\delta$; then $\widetilde{u},\widetilde{\mathbf{F}%
},\widetilde{a}_{ij}^{\alpha\beta}$ satisfy (\ref{3.1}) on $B_{4R_{0}}$ for
the same $\delta$:%
\begin{align*}
\frac{1}{\left\vert B_{4R_{0}}\right\vert }\int_{B_{4R_{0}}}\left\vert
X\widetilde{u}\left(  x\right)  \right\vert ^{2}dx  &  =\frac{1}{\left\vert
B_{4R_{0}}\right\vert }\int_{B_{4R_{0}}}\left\vert \left(  Xu\right)  \left(
D\left(  \frac{R}{R_{0}}\right)  x\right)  \right\vert ^{2}dx\\
=\frac{1}{\left\vert B_{4R_{0}}\right\vert }\left(  \frac{R_{0}}{R}\right)
^{Q}\int_{B_{4R}}\left\vert \left(  Xu\right)  \left(  y\right)  \right\vert
^{2}dy  &  =\frac{1}{\left\vert B_{4R}\right\vert }\int_{B_{4R}}\left\vert
\left(  Xu\right)  \left(  y\right)  \right\vert ^{2}dy\leq1;
\end{align*}%
\begin{align*}
&  \frac{1}{\left\vert B_{4R_{0}}\right\vert }\int_{B_{4R_{0}}}\left(
\left\vert \widetilde{\mathbf{F}}\right\vert ^{2}+\left\vert \widetilde
{a}_{\alpha\beta}^{ij}-\left(  \widetilde{a}_{\alpha\beta}^{ij}\right)
_{B_{4R_{0}}}\right\vert ^{2}\right)  dx\\
&  =\frac{1}{\left\vert B_{4R}\right\vert }\int_{B_{4R}}\left(  \left\vert
\mathbf{F}\right\vert ^{2}+\left\vert a_{\alpha\beta}^{ij}-\left(
a_{\alpha\beta}^{ij}\right)  _{B_{4R}}\right\vert ^{2}\right)  dx\leq\delta.
\end{align*}

Hence, by the first part of the proof, there exists a weak solution
$\widetilde{v}$ to the following homogeneous system with constant
coefficients:
\[
X_{i}\left(  \left(  \widetilde{a}_{\alpha\beta}^{ij}\right)  _{B_{4R_{0}}%
}X_{j}\widetilde{v}^{\beta}(x)\right)  =0\text{ in }B_{4R_{0}}%
\]
such that%
\[
\frac{1}{R_{0}^{2}}\frac{1}{\left\vert B_{4R_{0}}\right\vert }\int_{B_{4R_{0}%
}}\left\vert \widetilde{u}-\widetilde{v}\right\vert ^{2}dx\leq\varepsilon
^{2}.
\]
Then, the function%
\[
v\left(  x\right)  =\frac{R}{R_{0}}\widetilde{v}\left(  D\left(  \frac{R_{0}%
}{R}\right)  x\right)
\]
satisfies
\[
X_{i}\left(  \left(  a_{\alpha\beta}^{ij}\right)  _{B_{4R}}X_{j}v^{\beta
}(x)\right)  =0\text{ in }B_{4R}%
\]
and%
\begin{align*}
&  \frac{1}{R^{2}}\frac{1}{\left\vert B_{4R}\right\vert }\int_{B_{4R}%
}\left\vert u\left(  x\right)  -v\left(  x\right)  \right\vert ^{2}dx\\
&  =\frac{1}{R^{2}}\frac{1}{\left\vert B_{4R}\right\vert }\int_{B_{4R}%
}\left\vert \frac{R}{R_{0}}\widetilde{u}\left(  D\left(  \frac{R_{0}}%
{R}\right)  x\right)  -\frac{R}{R_{0}}\widetilde{v}\left(  D\left(
\frac{R_{0}}{R}\right)  x\right)  \right\vert ^{2}dx\\
&  =\frac{1}{R_{0}^{2}}\frac{1}{\left\vert B_{4R}\right\vert }\int_{B_{4R}%
}\left\vert \widetilde{u}\left(  D\left(  \frac{R_{0}}{R}\right)  x\right)
-\widetilde{v}\left(  D\left(  \frac{R_{0}}{R}\right)  x\right)  \right\vert
^{2}dx\\
&  =\frac{1}{R_{0}^{2}}\frac{1}{\left\vert B_{4R_{0}}\right\vert }%
\int_{B_{4R_{0}}}\left\vert \widetilde{u}-\widetilde{v}\right\vert ^{2}%
dx\leq\varepsilon^{2}.
\end{align*}
We have therefore proved that the assertion holds with $\delta$ depending on
$R_{0}$ but independent of $R\leq R_{0}$.
\end{proof}

The following technical lemma is adapted from \cite[lemma 4.1, p.27]{c.w}.

\begin{lemma}
\label{cw-lemma}Let $\psi(t)$ be a bounded nonnegative function defined on the
interval $\left[  T_{0},T_{1}\right]  $, where $T_{1}>T_{0}\geq0$. Suppose
that for any $T_{0}\leq t\leq s\leq T_{1}$, $\psi$ satisfies
\[
\psi(t)\leq\vartheta\psi(s)+\frac{A}{\left(  s-t\right)  ^{\beta}}+B,
\]
where $\vartheta$, $A$, $B$, $\beta$ are nonnegative constants, and
$\vartheta<\frac{1}{3}$. Then
\[
\psi(\rho)\leq c_{\beta}\left[  \frac{A}{\left(  R-\rho\right)  ^{\beta}%
}+B\right]  ,\forall\rho,T_{0}\leq\rho<R\leq T_{1},
\]
where $c_{\beta}$ only depends on $\beta$.
\end{lemma}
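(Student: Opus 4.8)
The plan is to use the standard ``hole-filling'' iteration argument, applied to a cleverly chosen geometric sequence of radii. Fix $T_0\le\rho<R\le T_1$ and a dilation factor $\tau\in(0,1)$ to be chosen later (say $\tau$ near $1$), and set $t_i=\rho+(1-\tau^i)(R-\rho)$ for $i=0,1,2,\dots$, so that $t_0=\rho$, the sequence $t_i$ is increasing, $t_i\to R$, and $t_{i+1}-t_i=\tau^i(1-\tau)(R-\rho)$. Applying the hypothesis with $t=t_i$, $s=t_{i+1}$ gives
\[
\psi(t_i)\le\vartheta\,\psi(t_{i+1})+\frac{A}{\tau^{i\beta}(1-\tau)^\beta(R-\rho)^\beta}+B.
\]

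Next I would iterate this inequality $m$ times starting from $i=0$: a routine induction yields
\[
\psi(\rho)=\psi(t_0)\le\vartheta^m\psi(t_m)+\left[\frac{A}{(1-\tau)^\beta(R-\rho)^\beta}\sum_{i=0}^{m-1}\vartheta^i\tau^{-i\beta}+B\sum_{i=0}^{m-1}\vartheta^i\right].
\]
Since $\psi$ is bounded and $\vartheta<1/3<1$, the term $\vartheta^m\psi(t_m)\to0$ as $m\to\infty$. The two geometric sums converge provided $\vartheta\tau^{-\beta}<1$; this is the only place where the choice of $\tau$ matters, so I would pick $\tau\in(0,1)$ close enough to $1$ that $\vartheta\tau^{-\beta}<1$ — concretely one can take $\tau=\vartheta^{1/(2\beta)}$ if $\beta>0$ (and any fixed $\tau$, e.g. $\tau=1/2$, if $\beta=0$). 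Letting $m\to\infty$ then gives
\[
\psi(\rho)\le\frac{A}{(1-\tau)^\beta(R-\rho)^\beta}\cdot\frac{1}{1-\vartheta\tau^{-\beta}}+\frac{B}{1-\vartheta},
\]
which is the claimed estimate with $c_\beta=\max\bigl\{(1-\tau)^{-\beta}(1-\vartheta\tau^{-\beta})^{-1},\,(1-\vartheta)^{-1}\bigr\}$; since $\vartheta<1/3$ is fixed, a uniform choice of $\tau$ (depending only on $\beta$) makes $c_\beta$ depend only on $\beta$, as required.

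The argument has no real obstacle — it is entirely elementary — but the one point that requires a little care is the interplay between the geometric decay rate $\vartheta$ coming from ``hole-filling'' and the geometric blow-up $\tau^{-i\beta}$ coming from the shrinking gaps $t_{i+1}-t_i$; one must choose $\tau$ so that the product still decays, and then verify that the resulting constant can be bounded in terms of $\beta$ alone using only $\vartheta<1/3$. One should also double-check that $T_0\ge0$ and $T_1>T_0$ are used only to guarantee the $t_i$ stay in the domain of $\psi$ (they do, since $\rho\le t_i<R\le T_1$), so the hypothesis is legitimately applicable at each step.
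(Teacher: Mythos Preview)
Your argument is correct and is precisely the standard iteration proof of this lemma; the paper does not give its own proof but simply cites \cite[Lemma~4.1, p.~27]{c.w}, where essentially the same geometric-sequence iteration is carried out. One small remark: your concrete suggestion $\tau=\vartheta^{1/(2\beta)}$ makes $\tau$ depend on $\vartheta$, so to get $c_\beta$ depending on $\beta$ alone you should instead fix, e.g., $\tau=(2/3)^{1/\beta}$ (using only $\vartheta<1/3$), exactly as you indicate in your closing sentence.
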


We are going to enforce the previous theorem with the following

\begin{theorem}
\label{approximate gradient}For any $\varepsilon>0$ $R_{0}>0,$ there is a
small $\delta=\delta\left(  \varepsilon,R_{0},\mu\right)  >0$ such that for
any $R\leq R_{0}$, if $u\,\ $is a weak solution$\ $of system (\ref{system}) in
$B_{4R}\Subset\Omega$ and (\ref{3.1}) holds, then there exists a weak solution
$v$ to (\ref{3.2}) such that%
\[
\frac{1}{\left\vert B_{2R}\right\vert }\int_{B_{2R}}\left\vert
Xu-Xv\right\vert ^{2}dx\leq\varepsilon^{2}.
\]

\end{theorem}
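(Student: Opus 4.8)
The plan is to deduce the gradient estimate from the $L^{2}$ estimate of Theorem \ref{aproximate for u} by means of a Caccioppoli-type energy inequality for the difference $w=u-v$, the key new ingredient being the interior $L^{\infty}$ bound on $Xv$ furnished by Theorem \ref{L2.4}. First I would reduce to the case $\left(u\right)_{B_{4R}}=0$: neither hypothesis (\ref{3.1}) nor the conclusion is affected if $u$ is replaced by $u-\left(u\right)_{B_{4R}}$, since constants lie in the kernel of every $X_{j}$ and only $Xu$ enters both (\ref{3.1}) and the estimate to be proved. With this normalization, Poincaré's inequality (\ref{poincare inequality}) and the first bound in (\ref{3.1}) give $\frac{1}{\left\vert B_{4R}\right\vert }\int_{B_{4R}}\left\vert u\right\vert ^{2}\,dx\le cR^{2}$. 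Fix $\varepsilon_{1}>0$ (to be chosen later), let $\delta_{1}=\delta_{1}\left(\varepsilon_{1},R_{0},\mu\right)$ be the number given by Theorem \ref{aproximate for u}, and suppose $\delta\le\delta_{1}$; then there is a weak solution $v$ of (\ref{3.2}) with $\frac{1}{R^{2}}\frac{1}{\left\vert B_{4R}\right\vert }\int_{B_{4R}}\left\vert u-v\right\vert ^{2}\,dx\le\varepsilon_{1}^{2}$. In particular $\frac{1}{\left\vert B_{4R}\right\vert }\int_{B_{4R}}\left\vert v\right\vert ^{2}\,dx\le cR^{2}$, so since $v$ solves a \emph{constant}-coefficient system, Theorem \ref{L2.4} (used on $B_{4R}$ with $K=4/3$) yields
\[
\sup_{B_{3R}}\left\vert Xv\right\vert ^{2}\le\frac{c}{R^{2}}\frac{1}{\left\vert B_{4R}\right\vert }\int_{B_{4R}}\left\vert v\right\vert ^{2}\,dx\le c_{v},
\]
with $c_{v}$ depending only on $\mathbb{G},N,\mu$.

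Next I would run the energy estimate. Since $w=u-v\in HW^{1,2}\left(B_{4R}\right)$, subtracting the weak formulations of (\ref{system}) for $u$ and of (\ref{3.2}) for $v$, and testing with $\varphi=\phi^{2}w$ where $\phi\in C_{0}^{\infty}\left(B_{3R}\right)$, $\phi\equiv1$ on $B_{2R}$, $0\le\phi\le1$, $\left\vert X\phi\right\vert \le c/R$, one obtains
\[
\int_{B_{4R}}a_{\alpha\beta}^{ij}X_{j}w^{\beta}\,X_{i}\!\left(\phi^{2}w^{\alpha}\right)dx=\int_{B_{4R}}f_{\alpha}^{i}\,X_{i}\!\left(\phi^{2}w^{\alpha}\right)dx-\int_{B_{4R}}\!\left(a_{\alpha\beta}^{ij}-\left(a_{\alpha\beta}^{ij}\right)_{B_{4R}}\right)X_{j}v^{\beta}\,X_{i}\!\left(\phi^{2}w^{\alpha}\right)dx.
\]
Expanding $X_{i}\!\left(\phi^{2}w^{\alpha}\right)=\phi^{2}X_{i}w^{\alpha}+2\phi\left(X_{i}\phi\right)w^{\alpha}$, the principal term on the left is $\ge\mu\int\phi^{2}\left\vert Xw\right\vert ^{2}$ by (\ref{elliptic condition}), while every remaining term is estimated by Young's inequality using the three smallness inputs $\left\Vert a_{\alpha\beta}^{ij}-\left(a_{\alpha\beta}^{ij}\right)_{B_{4R}}\right\Vert _{L^{2}\left(B_{4R}\right)}\le\delta\left\vert B_{4R}\right\vert ^{1/2}$ and $\left\Vert \mathbf{F}\right\Vert _{L^{2}\left(B_{4R}\right)}\le\delta\left\vert B_{4R}\right\vert ^{1/2}$ (from (\ref{3.1})) and $\left\Vert w\right\Vert _{L^{2}\left(B_{4R}\right)}\le\varepsilon_{1}R\left\vert B_{4R}\right\vert ^{1/2}$ (from Theorem \ref{aproximate for u}). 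The reason for having traded $Xu$ for $Xv$ in the commutator term is precisely that on $\operatorname{supp}\phi\subset B_{3R}$ one has $\left\vert \left(a_{\alpha\beta}^{ij}-\left(a_{\alpha\beta}^{ij}\right)_{B_{4R}}\right)X_{j}v^{\beta}\right\vert \le c_{v}\,\bigl\vert a_{\alpha\beta}^{ij}-\left(a_{\alpha\beta}^{ij}\right)_{B_{4R}}\bigr\vert$, which is small in $L^{2}$. Absorbing into the left-hand side the $\int\phi^{2}\left\vert Xw\right\vert ^{2}$-contributions coming from the cross terms and the $\mathbf{F}$-term (or, if one prefers not to keep track of absolute constants, applying the iteration Lemma \ref{cw-lemma} over a family of radii between $2R$ and $3R$), one arrives at
\[
\frac{1}{\left\vert B_{2R}\right\vert }\int_{B_{2R}}\left\vert Xu-Xv\right\vert ^{2}\,dx\le c\left(\delta^{2}+\delta\varepsilon_{1}+\varepsilon_{1}^{2}\right),
\]
$c$ depending only on $\mathbb{G},N,\mu$. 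Finally I would choose $\varepsilon_{1}=\varepsilon_{1}\left(\varepsilon\right)\le1$ so that $c\varepsilon_{1}^{2}<\varepsilon^{2}/2$ — which fixes $\delta_{1}=\delta_{1}\left(\varepsilon_{1},R_{0},\mu\right)=\delta\left(\varepsilon,R_{0},\mu\right)$ — and then $\delta\le\delta_{1}$ small enough that $c\left(\delta^{2}+\delta\varepsilon_{1}\right)<\varepsilon^{2}/2$, which gives the claim.

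I expect the one genuine obstacle to be the commutator term $\int_{B_{4R}}\left(a_{\alpha\beta}^{ij}-\left(a_{\alpha\beta}^{ij}\right)_{B_{4R}}\right)X_{j}u^{\beta}\,X_{i}\!\left(\phi^{2}w^{\alpha}\right)dx$: with only $L^{2}$ information on $Xu$ there is no way to bound it in $L^{2}$, and the remedy is to replace $Xu$ by $Xv$ there and invoke the $L^{\infty}$ gradient estimate of Theorem \ref{L2.4} — which is exactly what makes the preliminary normalization $\left(u\right)_{B_{4R}}=0$, ensuring $\frac{1}{\left\vert B_{4R}\right\vert }\int_{B_{4R}}\left\vert v\right\vert ^{2}\le cR^{2}$, necessary.
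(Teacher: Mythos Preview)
Your proposal is correct and follows essentially the same route as the paper: invoke Theorem~\ref{aproximate for u} to produce $v$, write $w=u-v$ as a solution of a system with right-hand side $\mathbf{F}-(a-(a)_{B_{4R}})Xv$, run a Caccioppoli estimate on $w$ with a cutoff supported in $B_{3R}$, and use Theorem~\ref{L2.4} to bound $\sup_{B_{3R}}|Xv|$. The only cosmetic differences are that the paper tests with $\varphi w$ (linear in the cutoff) and then invokes Lemma~\ref{cw-lemma} to absorb, whereas your $\phi^{2}w$ choice allows direct absorption; and the paper subtracts $u_{B_{4R}}$ from $v$ before applying Theorem~\ref{L2.4}, which is equivalent to your initial normalization $(u)_{B_{4R}}=0$.
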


\begin{proof}
By Theorem \ref{aproximate for u}, we know that for any $\eta>0$, there exist
a small $\delta=\delta\left(  \eta,R_{0},\mu\right)  >0$ and a weak solution
$v$ of (\ref{3.2}) in $B_{4R}$, such that%
\begin{equation}
\frac{1}{R^{2}}\frac{1}{\left\vert B_{4R}\right\vert }\int_{B_{4R}}\left\vert
u-v\right\vert ^{2}dx\leq\eta^{2}, \label{3.16(1)}%
\end{equation}
provided (\ref{3.1}) holds.

Let us note that $u-v$ is a weak solution to the system%
\begin{equation}
X_{i}\left(  a_{\alpha\beta}^{ij}\left(  x\right)  X_{j}\left(  u^{\beta
}-v^{\beta}\right)  (x)\right)  =X_{i}\left(  f_{\alpha}^{i}\left(  x\right)
-\left(  a_{\alpha\beta}^{ij}\left(  x\right)  -\left(  a_{\alpha\beta}%
^{ij}\right)  _{B_{4R}}\right)  X_{j}v^{\beta}\right)  \text{ } \label{3.17}%
\end{equation}
in $B_{4R}$. For any $2R\leq s<t\leq3R$, we choose a cutoff function
$\varphi\left(  x\right)  $ which satisfies
\[
0<\varphi\left(  x\right)  \leq1\text{ in }B_{3R}\text{, }\varphi\left(
x\right)  \equiv1\text{ in }B_{s}\text{, }\varphi\left(  x\right)
\equiv0\text{ in }B_{3R}\backslash B_{t}\text{ }%
\]
and
\[
\left\vert X\varphi\left(  x\right)  \right\vert \leq\frac{c}{t-s}\text{ in
}B_{4R}\text{.}%
\]
Taking $\left(  u-v\right)  \varphi$ as a test function, it follows by
(\ref{3.17}) that
\begin{align*}
&  \mu\int_{B_{s}}\left\vert X\left(  u-v\right)  \right\vert ^{2}dx\\
&  \leq\int_{B_{t}}\varphi\left(  x\right)  a_{\alpha\beta}^{ij}\left(
x\right)  X_{j}\left(  u^{\beta}-v^{\beta}\right)  X_{i}\left(  u^{\alpha
}-v^{\alpha}\right)  dx\\
&  =\int_{B_{t}}\left(  f_{\alpha}^{i}\left(  x\right)  -\left(
a_{\alpha\beta}^{ij}\left(  x\right)  -\left(  a_{\alpha\beta}^{ij}\right)
_{B_{4R}}\right)  X_{j}v^{\beta}\right)  X_{i}\left(  \left(  u^{\alpha
}-v^{\alpha}\right)  \varphi\right)  dx\\
&  -\int_{B_{t}}a_{\alpha\beta}^{ij}\left(  x\right)  \left(  u^{\alpha
}-v^{\alpha}\right)  X_{j}\left(  u^{\beta}-v^{\beta}\right)  X_{i}\varphi dx.
\end{align*}
By the properties of $\varphi$, Young's inequality and
(\ref{elliptic condition}),%
\begin{align*}
&  \int_{B_{s}}\left\vert Xu-Xv\right\vert ^{2}dx\\
&  \leq c\int_{B_{t}}\left(  \left\vert \mathbf{F}\right\vert +\max
_{i,j,\alpha,\beta}\left\vert a_{\alpha\beta}^{ij}\left(  x\right)  -\left(
a_{\alpha\beta}^{ij}\left(  x\right)  \right)  _{B_{4R}}\right\vert \left\vert
Xv\right\vert \right)  ^{2}dx+\\
&  +\frac{1}{4}\int_{B_{t}}\left\vert Xu-Xv\right\vert ^{2}dx+\frac{c}{\left(
t-s\right)  ^{2}}\int_{B_{t}}\left\vert u-v\right\vert ^{2}dx
\end{align*}%
\begin{align*}
&  \leq c\int_{4R}\left\vert \mathbf{F}\right\vert ^{2}dx+\sup_{B_{3R}%
}\left\vert Xv\right\vert ^{2}\cdot\max_{i,j,\alpha,\beta}\int_{B_{4R}%
}\left\vert a_{\alpha\beta}^{ij}\left(  x\right)  -\left(  a_{\alpha\beta
}^{ij}\left(  x\right)  \right)  _{B_{4R}}\right\vert ^{2}dx\\
&  +\frac{c}{\left(  t-s\right)  ^{2}}\int_{B_{4R}}\left\vert u-v\right\vert
^{2}dx+\frac{1}{4}\int_{B_{t}}\left\vert Xu-Xv\right\vert ^{2}dx.
\end{align*}
Setting%
\begin{align*}
\psi\left(  s\right)   &  =\int_{B_{s}}\left\vert Xu-Xv\right\vert ^{2}dx,\\
B  &  =c\int_{B_{4R}}\left\vert \mathbf{F}\right\vert ^{2}dx+\sup_{B_{3R}%
}\left\vert Xv\right\vert ^{2}\cdot\max_{i,j,\alpha,\beta}\int_{B_{4R}%
}\left\vert a_{\alpha\beta}^{ij}\left(  x\right)  -\left(  a_{\alpha\beta
}^{ij}\left(  x\right)  \right)  _{B_{4R}}\right\vert ^{2}dx,\\
A  &  =\int_{B_{4R}}\left\vert u-v\right\vert ^{2}dx,\beta=2,
\end{align*}
by Lemma \ref{cw-lemma} we deduce%
\begin{align}
&  \int_{B_{2R}}\left\vert Xu-Xv\right\vert ^{2}dx\leq\frac{c}{R^{2}}%
\int_{B_{4R}}\left\vert u-v\right\vert ^{2}dx+c\int_{B_{4R}}\left\vert
\mathbf{F}\right\vert ^{2}dx\nonumber\\
&  +c\underset{B_{3R}}{\sup}\left\vert Xv\right\vert ^{2}\cdot\max
_{i,j,\alpha,\beta}\int_{B_{4R}}\left\vert \left(  a_{\alpha\beta}^{ij}\left(
x\right)  -\left(  a_{\alpha\beta}^{ij}\left(  x\right)  \right)  _{B_{4R}%
}\right)  \right\vert ^{2}dx. \label{w(1,1)b}%
\end{align}
By Theorem \ref{L2.4}, since $v-u_{B_{4R}}$ is still a solution to the system
(\ref{3.2}) in $B_{4R}$ we can write:%
\begin{align*}
\underset{B_{3R}}{\sup}\left\vert Xv\right\vert  &  \leq\frac{c}{R}\left\vert
B_{R}\right\vert ^{-1/2}\left\Vert v-u_{B_{4R}}\right\Vert _{L^{2}\left(
B_{4R}\right)  }\\
&  \leq\frac{c}{R}\left\vert B_{R}\right\vert ^{-1/2}\left(  \left\Vert
u-v\right\Vert _{L^{2}\left(  B_{4R}\right)  }+\left\Vert u-u_{B_{4R}%
}\right\Vert _{L^{2}\left(  B_{4R}\right)  }\right)
\end{align*}
by (\ref{3.16(1)}), (\ref{poincare inequality}) and assumption (\ref{3.1}) on
$u$
\begin{equation}
\leq c\eta+c\left\vert B_{R}\right\vert ^{-1/2}\left\Vert Xu\right\Vert
_{L^{2}\left(  B_{4R}\right)  }\leq c\left(  \eta+1\right)  \leq N_{0},
\label{w(1,1)}%
\end{equation}
for some absolute constant $N_{0}$ when $\eta$ is, say, any number $\leq1$.

By (\ref{w(1,1)b}) and (\ref{w(1,1)}) we have%
\begin{align*}
&  \frac{1}{\left\vert B_{2R}\right\vert }\int_{B_{2R}}\left\vert
Xu-Xv\right\vert ^{2}dx\\
&  \leq\frac{c}{\left\vert B_{4R}\right\vert }\int_{B_{4R}}\left\vert
\mathbf{F}\right\vert ^{2}dx+\frac{cN_{0}}{\left\vert B_{4R}\right\vert }%
\max_{i,j,\alpha,\beta}\int_{B_{4R}}\left\vert a_{\alpha\beta}^{ij}\left(
x\right)  -\left(  a_{\alpha\beta}^{ij}\right)  _{B_{4R}}\right\vert ^{2}dx\\
&  +\frac{c}{R^{2}}\frac{1}{\left\vert B_{4R}\right\vert }\int_{B_{4R}%
}\left\vert u-v\right\vert ^{2}dx,
\end{align*}
by (\ref{3.16(1)}) and (\ref{3.1})%
\begin{align*}
&  \leq\frac{c}{\left\vert B_{4R}\right\vert }\int_{B_{4R}}\left(  \left\vert
\mathbf{F}\right\vert ^{2}+\max_{i,j,\alpha,\beta}\left\vert a_{\alpha\beta
}^{ij}\left(  x\right)  -\left(  a_{\alpha\beta}^{ij}\left(  x\right)
\right)  _{B_{4R}}\right\vert ^{2}\right)  dx+c\eta^{2}\\
&  \leq c\left(  \delta^{2}+\eta^{2}\right)  <\varepsilon^{2},
\end{align*}
for a suitable choice of $\eta,$ and after possibly diminishing $\delta.$ This
ends the proof.
\end{proof}

\section{Estimates on the maximal function of $\left\vert Xu\right\vert ^{2}%
$\label{sec maximal}}

\begin{definition}
Let $B_{R}\Subset\Omega$. For every $f\in L^{1}\left(  B_{R}\right)  $, define
the Hardy--Littlewood maximal function of $f$ by%
\[
\mathcal{M}_{B_{R}}\left(  f\right)  \left(  x\right)  =\sup_{r>0}\frac
{1}{\left\vert B_{r}\left(  x\right)  \cap B_{R}\right\vert }\int
_{B_{r}\left(  x\right)  \cap B_{R}}\left\vert f\left(  y\right)  \right\vert
dy.
\]

\end{definition}

Since $\left(  B_{R},d_{X},dx\right)  $ is a space of homogeneous type (see
Remark \ref{d-regular and homegeneous}), by \cite[Thm.2.1 p.71]{cw} the
following holds:

\begin{lemma}
\label{maximal function}Let $f\in L^{1}\left(  B_{R}\right)  $, then

(i) $\mathcal{M}_{B_{R}}\left(  f\right)  \left(  x\right)  $ is finite almost
everywhere in $B_{R}$;

(ii) for every $\alpha>0$,
\[
\left\vert \left\{  x\in B_{R}:\mathcal{M}_{B_{R}}\left(  f\right)  \left(
x\right)  >\alpha\right\}  \right\vert \leq\frac{c_{1}}{\alpha}\int_{B_{R}%
}\left\vert f\left(  y\right)  \right\vert dy;
\]

(iii) if $f\in L^{p}\left(  B_{R}\right)  $ with $1<p<\infty$, then
$\mathcal{M}_{B_{R}}\left(  f\right)  \in L^{p}\left(  B_{R}\right)  $ and
\[
\left\Vert \mathcal{M}_{B_{R}}\left(  f\right)  \right\Vert _{L^{p}\left(
B_{R}\right)  }\leq c_{p}\left\Vert f\right\Vert _{L^{p}\left(  B_{R}\right)
},
\]
where the constants $c_{p}$ only depend on $p$ and $\mathbb{G}$ (but are
independent of $B_{R}$).
\end{lemma}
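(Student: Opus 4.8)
The statement to be proved is Lemma \ref{maximal function}, which asserts the standard Hardy--Littlewood maximal function properties on the space of homogeneous type $\left(B_{R},d_{X},dx\right)$. The plan is simply to verify that the general maximal function theorem of Coifman--Weiss applies in this setting, and then quote it. First I would recall, as already noted in Remark \ref{d-regular and homegeneous}, that the $d_{X}$-regularity estimate (\ref{d-regular}) makes $\left(B_{R}\left(x_{0}\right),d_{X},dx\right)$ a space of homogeneous type: the Lebesgue measure restricted to $B_{R}$ is a regular Borel measure, the balls $B_{r}\left(x\right)\cap B_{R}$ are open, and the doubling condition holds because by (\ref{dimension}) and (\ref{d-regular}) one has $\left\vert B_{2r}\left(x\right)\cap B_{R}\right\vert\leq\left\vert B_{2r}\left(x\right)\right\vert=2^{Q}\left\vert B_{r}\left(x\right)\right\vert\leq\left(2^{Q}/c_{d}\right)\left\vert B_{r}\left(x\right)\cap B_{R}\right\vert$ for every $x\in B_{R}$. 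Moreover, by the dilation invariance of the Carnot group structure, $c_{d}$ and hence the doubling constant are independent of $R$ and $x_{0}$.

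Next I would invoke the Coifman--Weiss theory \cite[Thm.~2.1, p.~71]{cw} for the uncentered maximal operator on a space of homogeneous type. That theorem gives exactly assertions (i), (ii) and (iii): the weak $(1,1)$ bound is the maximal theorem proper, finiteness a.e.\ is an immediate consequence of the weak $(1,1)$ bound applied to $L^{1}$ functions, and the strong $(p,p)$ bound for $1<p<\infty$ follows by the Marcinkiewicz interpolation theorem between the weak $(1,1)$ estimate and the trivial $L^{\infty}$ bound $\left\Vert\mathcal{M}_{B_{R}}f\right\Vert_{L^{\infty}}\leq\left\Vert f\right\Vert_{L^{\infty}}$. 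The only point requiring a word of care is that the constants $c_{1}$ and $c_{p}$ produced by the Coifman--Weiss theorem depend only on the doubling constant of the space and (for $c_{p}$) on $p$; since we have just argued that the doubling constant of $\left(B_{R},d_{X},dx\right)$ is an absolute constant depending only on $\mathbb{G}$, the resulting $c_{1}$ and $c_{p}$ are independent of $B_{R}$, as claimed.

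There is no real obstacle here: the lemma is a direct citation once the homogeneous-type structure with uniform constants has been identified, and that identification has essentially already been carried out in Remark \ref{d-regular and homegeneous}. The mildly delicate step — the only one worth spelling out — is the uniformity of the constants in $R$, which is precisely what is needed later (in \S\ref{section L^p estimates}) so that the $L^{p}$ estimates on the maximal function of $\left\vert Xu\right\vert^{2}$ can be combined across balls of varying radii without an accumulating constant. I would therefore state the proof in one or two sentences: $\left(B_{R},d_{X},dx\right)$ is a space of homogeneous type with doubling constant independent of $R$ by Remark \ref{d-regular and homegeneous}, hence \cite[Thm.~2.1, p.~71]{cw} applies and yields (i)--(iii) with constants depending only on $\mathbb{G}$ and $p$.
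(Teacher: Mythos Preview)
Your proposal is correct and follows essentially the same approach as the paper: the paper simply observes that $\left(B_{R},d_{X},dx\right)$ is a space of homogeneous type by Remark \ref{d-regular and homegeneous}, invokes \cite[Thm.~2.1, p.~71]{cw}, and then remarks that the constants depend only on the quasitriangle constant (here $1$) and the doubling constant (independent of $R$), hence on $\mathbb{G}$ and $p$ alone. Your write-up is slightly more detailed (you spell out the doubling computation and mention Marcinkiewicz interpolation), but the argument is the same.
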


The last statement about the dependence of the constants requires some
explanation. In any space of homogeneous type these constants depend on the
two constants of the space, namely the one appearing in the \textquotedblleft
quasitriangle inequality\textquotedblright\ (\ref{quasitriangle}) and the
doubling constant appearing in (\ref{doubling}). In our case the first
constant is $1$ (since $d_{X}$ is a distance) and the second is independent of
$R,$ by Remark \ref{d-regular and homegeneous}. Hence $c_{p}$ is independent
of $R$.

\begin{theorem}
\label{main lemma}There exists an absolute constant $N_{1}$ such that for any
$\varepsilon>0$, $R_{0}>0,$ there is a small $\delta=\delta\left(
\varepsilon,R_{0},\mu\right)  >0$ such that for any $R\leq R_{0}/2,$ $z\in
B_{R}\left(  \overline{x}\right)  \subset B_{11R}\left(  \overline{x}\right)
\Subset\Omega$ and $0<r\leq2R$, if $u$ is a weak solution of (\ref{system}) in
$B_{11R}\left(  \overline{x}\right)  $ with
\begin{align}
B_{r}\left(  z\right)  \cap\left\{  x\in B_{R}\left(  \overline{x}\right)
:\mathcal{M}_{B_{11R}\left(  \overline{x}\right)  }\left(  \left\vert
Xu\right\vert ^{2}\right)  \left(  x\right)  \leq1\right\}   &  \cap
\nonumber\\
\cap\left\{  x\in B_{R}\left(  \overline{x}\right)  :\mathcal{M}%
_{B_{11R}\left(  \overline{x}\right)  }\left(  \left\vert \mathbf{F}%
\right\vert ^{2}\right)  \left(  x\right)  \leq\delta^{2}\right\}  \neq &
\emptyset\label{3.18}%
\end{align}
and the coefficients $a_{\alpha\beta}^{ij}\left(  x\right)  $ are $\left(
\delta,4r\right)  $-vanishing in $B_{R}\left(  \overline{x}\right)  $, then%
\begin{equation}
\left\vert B_{r}\left(  z\right)  \cap\left\{  x\in B_{R}\left(  \overline
{x}\right)  :\mathcal{M}_{B_{11R}\left(  \overline{x}\right)  }\left(
\left\vert Xu\right\vert ^{2}\right)  \left(  x\right)  >N_{1}^{2}\right\}
\right\vert <\varepsilon\left\vert B_{r}\left(  z\right)  \right\vert .
\label{3.19}%
\end{equation}

\end{theorem}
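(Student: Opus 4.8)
The plan is to argue by contradiction on the measure conclusion (\ref{3.19}), combining the approximation result of \S\ref{sec approx} with the interior gradient bound for constant-coefficient systems (Theorem \ref{L2.4}) and the weak $(1,1)$ bound for the maximal function (Lemma \ref{maximal function}(ii)). First I would set up the geometry: fix a point $x_{1}$ lying in the (nonempty) intersection in (\ref{3.18}), so that $\mathcal{M}_{B_{11R}(\overline{x})}(|Xu|^{2})(x_{1})\leq 1$ and $\mathcal{M}_{B_{11R}(\overline{x})}(|\mathbf{F}|^{2})(x_{1})\leq\delta^{2}$. Since $z\in B_{R}(\overline{x})$ and $r\leq 2R$, one checks that $B_{4r}(z)\subset B_{r}(x_{1})\cup\cdots\subset B_{11R}(\overline{x})$ up to enlarging radii by a fixed factor; the point is that there is a fixed ball of radius comparable to $r$, say $B_{\kappa r}(z)$ with $\kappa$ an absolute constant (here $\kappa\leq 4$ suffices so that $B_{4r}(z)\Subset\Omega$), containing $z$ and contained in $B_{11R}(\overline{x})$, over which the averages of $|Xu|^{2}$ and of $|\mathbf{F}|^{2}$ are controlled by $1$ and $\delta^{2}$ respectively, because they are dominated by the maximal function evaluated at $x_{1}$. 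After a normalization (dividing $u$ and $\mathbf{F}$ by a harmless constant), this puts us exactly in the hypotheses (\ref{3.1}) of Theorem \ref{approximate gradient} on the ball $B_{4r}(z)$, with the VMO smallness coming from the $(\delta,4r)$-vanishing assumption on the coefficients.

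Next, apply Theorem \ref{approximate gradient}: for the prescribed $\varepsilon$ choose the auxiliary accuracy $\varepsilon_{1}$ (to be fixed at the end) and obtain $\delta=\delta(\varepsilon_{1},R_{0},\mu)$ and a weak solution $v$ of the constant-coefficient system (\ref{3.2}) on $B_{4r}(z)$ with
\[
\frac{1}{\left\vert B_{2r}(z)\right\vert }\int_{B_{2r}(z)}\left\vert Xu-Xv\right\vert ^{2}dx\leq\varepsilon_{1}^{2}.
\]
By Theorem \ref{L2.4} applied on $B_{2r}(z)\subset B_{4r}(z)$ (with a fixed dilation factor $K$, e.g. $K=2$), $v$ is smooth and $\sup_{B_{r}(z)}|Xv|^{2}\leq c\,r^{-2}|B_{2r}(z)|^{-1}\int_{B_{2r}(z)}|v-v_{B_{2r}(z)}|^{2}\leq c\,|B_{2r}(z)|^{-1}\int_{B_{2r}(z)}|Xv|^{2}dx$, using Poincar\'e's inequality (\ref{poincare inequality}). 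Combining this with the $L^{2}$ closeness of $Xu$ and $Xv$ and with the normalized bound $|B_{\kappa r}(z)|^{-1}\int_{B_{\kappa r}(z)}|Xu|^{2}\leq 1$, we get $\sup_{B_{r}(z)}|Xv|^{2}\leq N_{1}^{2}/2$ for an absolute constant $N_{1}$ (this is where $N_{1}$ is defined), provided $\varepsilon_{1}\leq 1$. The decomposition $|Xu|^{2}\leq 2|Xu-Xv|^{2}+2|Xv|^{2}$ then shows that on $B_{r}(z)$ one has $|Xu|^{2}>N_{1}^{2}$ only where $|Xu-Xv|^{2}>N_{1}^{2}/4$.

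Finally, estimate the bad set. Split the maximal function: for $x\in B_{r}(z)$, balls $B_{s}(x)$ of radius $s\leq r$ are contained in $B_{2r}(z)$, while balls of radius $s>r$ have average of $|Xu|^{2}$ bounded by a dimensional multiple of $\mathcal{M}_{B_{11R}(\overline{x})}(|Xu|^{2})(x_{1})\leq 1\leq N_{1}^{2}$ (again because $x_{1}$ sees all these larger balls up to a fixed factor, by the doubling property with $R$-independent constant from Remark \ref{d-regular and homegeneous}). Hence for $x\in B_{r}(z)$, $\mathcal{M}_{B_{11R}(\overline{x})}(|Xu|^{2})(x)>N_{1}^{2}$ forces $\mathcal{M}_{B_{2r}(z)}(|Xu|^{2}\chi_{B_{2r}(z)})(x)>N_{1}^{2}$, and on that event, using $|Xu|^{2}\leq 2|Xu-Xv|^{2}+2|Xv|^{2}$ with $|Xv|^{2}\leq N_{1}^{2}/2$ and the (local) maximal function being subadditive, we reduce to $\mathcal{M}(|Xu-Xv|^{2}\chi_{B_{2r}(z)})(x)>c\,N_{1}^{2}$. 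By Lemma \ref{maximal function}(ii) with $R$-independent constant,
\[
\left\vert \left\{ x\in B_{r}(z):\mathcal{M}_{B_{11R}(\overline{x})}(|Xu|^{2})(x)>N_{1}^{2}\right\} \right\vert \leq\frac{c}{N_{1}^{2}}\int_{B_{2r}(z)}|Xu-Xv|^{2}dx\leq\frac{c}{N_{1}^{2}}\,\varepsilon_{1}^{2}\,|B_{2r}(z)|\leq c\,\varepsilon_{1}^{2}\,|B_{r}(z)|,
\]
where the last step uses doubling. Choosing $\varepsilon_{1}$ small enough that $c\,\varepsilon_{1}^{2}<\varepsilon$ (and then $\delta$ accordingly) yields (\ref{3.19}).

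The main obstacle I anticipate is the careful bookkeeping of radii and inclusions: one must verify that a single point $x_{1}$ in the intersection (\ref{3.18}) simultaneously controls the averages of $|Xu|^{2}$ and $|\mathbf{F}|^{2}$ over all the balls $B_{s}(x)$ with $x\in B_{r}(z)$ that are relevant both to setting up hypothesis (\ref{3.1}) on $B_{4r}(z)$ and to bounding the maximal function of $|Xu|^{2}$ at points of $B_{r}(z)$ from the ``large radius'' side — and that every such ball stays inside $B_{11R}(\overline{x})$. This is precisely why the radii $11R$ and $4r\leq 8R$ appear in the statement, and it relies crucially on the fact that in a Carnot group the doubling constant of the metric balls is independent of $R$ (Remark \ref{d-regular and homegeneous}), so that all the comparison constants are absolute; keeping $N_{1}$ and $\delta$ independent of $R\leq R_{0}$ is the whole point, and it is guaranteed because Theorem \ref{approximate gradient} and Theorem \ref{L2.4} both deliver $R$-independent constants.
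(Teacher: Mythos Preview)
Your proposal is correct and follows essentially the same route as the paper: pick a point in the intersection (\ref{3.18}) to control the averages of $|Xu|^2$ and $|\mathbf{F}|^2$ on $B_{4r}(z)$, invoke Theorem~\ref{approximate gradient} to get the approximating constant-coefficient solution $v$, use the interior $L^\infty$ gradient bound for $v$, split the maximal function at scale $r$, and finish with the weak $(1,1)$ estimate. One small correction: in your penultimate paragraph you need $\sup_{B_{2r}(z)}|Xv|^2\le N_1^2/2$ (not just on $B_r(z)$), since the small-radius balls $B_s(x)$ with $x\in B_r(z)$, $s\le r$ lie in $B_{2r}(z)$; this is obtained by applying Theorem~\ref{L2.4} with inner ball $B_{2r}(z)$ and outer ball $B_{4r}(z)$, exactly as the paper does via the bound (\ref{w(1,1)}) on $B_{3r}(z)$ already established inside the proof of Theorem~\ref{approximate gradient}.
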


\begin{proof}
Fix $\varepsilon,R_{0}>0;$ the number $\delta$ will be chosen later. By
(\ref{3.18}), there exists a point $x_{0}\in B_{r}\left(  z\right)  $, such
that for any $\rho>0$,%
\begin{align}
\frac{1}{\left\vert B_{\rho}\left(  x_{0}\right)  \cap B_{11R}\left(
\overline{x}\right)  \right\vert }\int_{B_{\rho}\left(  x_{0}\right)  \cap
B_{11R}\left(  \overline{x}\right)  }\left\vert Xu\right\vert ^{2}dx  &
\leq1,\label{add3}\\
\frac{1}{\left\vert B_{\rho}\left(  x_{0}\right)  \cap B_{11R}\left(
\overline{x}\right)  \right\vert }\int_{B_{\rho}\left(  x_{0}\right)  \cap
B_{11R}\left(  \overline{x}\right)  }\left\vert \mathbf{F}\right\vert ^{2}dx
&  \leq\delta^{2}. \label{3.20}%
\end{align}
Since $z,x_{0}\in B_{R}\left(  \overline{x}\right)  $ and $r\leq2R$, we have
the inclusions: $B_{4r}\left(  z\right)  \subset B_{5r}\left(  x_{0}\right)
\subset B_{11R}\left(  \overline{x}\right)  $ and $B_{5r}\left(  x_{0}\right)
\subset B_{6r}\left(  z\right)  $. Then by (\ref{3.20}) with $\rho=5r$ we have
that%
\begin{equation}
\frac{1}{\left\vert B_{4r}\left(  z\right)  \right\vert }\int_{B_{4r}\left(
z\right)  }\left\vert \mathbf{F}\right\vert ^{2}dx\leq\frac{\left\vert
B_{6r}\left(  z\right)  \right\vert }{\left\vert B_{4r}\left(  z\right)
\right\vert }\frac{1}{\left\vert B_{5r}\left(  x_{0}\right)  \right\vert }%
\int_{B_{5r}\left(  x_{0}\right)  }\left\vert \mathbf{F}\right\vert ^{2}%
dx\leq\left(  \frac{6}{4}\right)  ^{Q}\delta^{2}. \label{3.21}%
\end{equation}
Similarly, by (\ref{add3}) we find%
\begin{equation}
\frac{1}{\left\vert B_{4r}\left(  z\right)  \right\vert }\int_{B_{4r}\left(
z\right)  }\left\vert Xu\right\vert ^{2}dx\leq\left(  \frac{6}{4}\right)
^{Q}. \label{3.22}%
\end{equation}
By (\ref{3.21}), (\ref{3.22}) and the assumption on $a_{\alpha\beta}%
^{ij}\left(  x\right)  $, we can apply Theorem \ref{approximate gradient}
(with $u$ replaced by $\left(  \frac{4}{6}\right)  ^{Q}u$ and $\mathbf{F}$
replaced by $\left(  \frac{4}{6}\right)  ^{Q}\mathbf{F}$) on the ball
$B_{4r}\left(  z\right)  $ (recall that $r\leq R_{0}$) and obtain that for any
$\eta>0$, there exists a small $\delta=\delta\left(  \eta,R_{0},\mu\right)  $
and a weak solution $v$ to
\[
X_{i}\left(  \left(  a_{\alpha\beta}^{ij}\right)  _{B_{4r}\left(  z\right)
}X_{j}v\right)  =0\text{ in }B_{4r}\left(  z\right)
\]
such that%
\begin{equation}
\frac{1}{\left\vert B_{2r}\left(  z\right)  \right\vert }\int_{B_{2r}\left(
z\right)  }\left\vert X\left(  u-v\right)  \right\vert ^{2}dx\leq\eta^{2}.
\label{3.23}%
\end{equation}

Also, recall the interior $HW^{1,\infty}$ regularity of $v$ (\ref{w(1,1)}): \
\begin{equation}
\left\Vert Xv\right\Vert _{L^{\infty}\left(  B_{3r}\left(  z\right)  \right)
}^{2}\leq N_{0}^{2}. \label{3.24}%
\end{equation}
Now, pick%
\begin{equation}
N_{1}^{2}=\max\left\{  \frac{5^{Q}}{c_{d}},4N_{0}^{2}\right\}  . \label{N1}%
\end{equation}
Then we claim that%
\begin{align}
&  \left\{  x\in B_{R}\left(  \overline{x}\right)  :\mathcal{M}_{B_{11R}%
\left(  \overline{x}\right)  }\left(  \left\vert Xu\right\vert ^{2}\right)
\left(  x\right)  >N_{1}^{2}\right\}  \cap B_{r}\left(  z\right) \nonumber\\
&  \subset\left\{  x\in B_{R}\left(  \overline{x}\right)  :\mathcal{M}%
_{B_{2r}\left(  z\right)  }\left(  \left\vert X\left(  u-v\right)  \right\vert
^{2}\right)  \left(  x\right)  >N_{0}^{2}\right\}  \cap B_{r}\left(  z\right)
. \label{3.25}%
\end{align}

To see this, suppose
\begin{equation}
x_{1}\in\left\{  x\in B_{R}\left(  \overline{x}\right)  \cap B_{r}\left(
z\right)  :\mathcal{M}_{B_{2r}\left(  z\right)  }\left(  \left\vert X\left(
u-v\right)  \right\vert ^{2}\right)  \left(  x\right)  \leq N_{0}^{2}\right\}
.\label{3.26}%
\end{equation}
When $\rho\leq r,$ it follows $B_{\rho}\left(  x_{1}\right)  \subset
B_{2r}\left(  z\right)  \subset B_{5R}\left(  \overline{x}\right)  $, then
(\ref{3.26}) and (\ref{3.24}) imply%
\begin{align}
&  \frac{1}{\left\vert B_{\rho}\left(  x_{1}\right)  \cap B_{11R}\left(
\overline{x}\right)  \right\vert }\int_{B_{\rho}\left(  x_{1}\right)  \cap
B_{11R}\left(  \overline{x}\right)  }\left\vert Xu\right\vert ^{2}dx=\frac
{1}{\left\vert B_{\rho}\left(  x_{1}\right)  \right\vert }\int_{B_{\rho
}\left(  x_{1}\right)  }\left\vert Xu\right\vert ^{2}dx\label{3.27}\\
&  \leq\frac{2}{\left\vert B_{\rho}\left(  x_{1}\right)  \right\vert }%
\int_{B_{\rho}\left(  x_{1}\right)  }\left(  \left\vert X\left(  u-v\right)
\right\vert ^{2}+\left\vert Xv\right\vert ^{2}\right)  dx\leq4N_{0}^{2}\leq
N_{1}^{2}.\nonumber
\end{align}
When $\rho>r$, since $x_{1},x_{0}\in B_{r}\left(  z\right)  $ we have
$d\left(  x_{1},x_{0}\right)  <2r<2\rho$; it follows $B_{\rho}\left(
x_{1}\right)  \subset B_{3\rho}\left(  x_{0}\right)  \subset B_{5\rho}\left(
x_{1}\right)  $. Then by Remark \ref{d-regular and homegeneous} and
(\ref{add3}) we have
\begin{align}
&  \frac{1}{\left\vert B_{\rho}\left(  x_{1}\right)  \cap B_{11R}\left(
\overline{x}\right)  \right\vert }\int_{B_{\rho}\left(  x_{1}\right)  \cap
B_{11R}\left(  \overline{x}\right)  }\left\vert Xu\right\vert ^{2}%
dx\nonumber\\
&  \leq\frac{1}{c_{d}\left\vert B_{\rho}\left(  x_{1}\right)  \right\vert
}\int_{B_{3\rho}\left(  x_{0}\right)  \cap B_{11R}\left(  \overline{x}\right)
}\left\vert Xu\right\vert ^{2}dx\nonumber\\
&  =\frac{5^{Q}}{c_{d}\left\vert B_{5\rho}\left(  x_{1}\right)  \right\vert
}\int_{B_{3\rho}\left(  x_{0}\right)  \cap B_{11R}\left(  \overline{x}\right)
}\left\vert Xu\right\vert ^{2}dx\nonumber\\
&  \leq\frac{5^{Q}}{c_{d}\left\vert B_{3\rho}\left(  x_{0}\right)  \cap
B_{11R}\left(  \overline{x}\right)  \right\vert }\int_{B_{3\rho}\left(
x_{0}\right)  \cap B_{11R}\left(  \overline{x}\right)  }\left\vert
Xu\right\vert ^{2}dx\nonumber\\
&  \leq\frac{5^{Q}}{c_{d}}\leq N_{1}^{2}.\label{3.28}%
\end{align}
By (\ref{3.27}) and (\ref{3.28}), we have%
\begin{equation}
x_{1}\in\left\{  x\in B_{R}\left(  \overline{x}\right)  :\mathcal{M}%
_{B_{11R}\left(  \overline{x}\right)  }\left(  \left\vert Xu\right\vert
^{2}\right)  \leq N_{1}^{2}\right\}  \cap B_{r}\left(  z\right)  .\label{3.29}%
\end{equation}
Thus, inclusion (\ref{3.25}) follows from the fact that (\ref{3.26}) implies
(\ref{3.29}).

By (\ref{3.25}), Lemma \ref{maximal function} (ii) and (\ref{3.23}) , we have%
\begin{align*}
&  \left\vert \left\{  x\in B_{R}\left(  \overline{x}\right)  :\mathcal{M}%
_{B_{11R}\left(  \overline{x}\right)  }\left(  \left\vert Xu\right\vert
^{2}\right)  \left(  x\right)  >N_{1}^{2}\right\}  \cap B_{r}\left(  z\right)
\right\vert \\
&  \leq\left\vert \left\{  x\in B_{2r}\left(  z\right)  :\mathcal{M}%
_{B_{2r}\left(  z\right)  }\left(  \left\vert X\left(  u-v\right)  \right\vert
^{2}\right)  \left(  x\right)  >N_{0}^{2}\right\}  \right\vert \\
&  \leq\frac{c}{N_{0}^{2}}\int_{B_{2r}\left(  z\right)  }\left\vert X\left(
u-v\right)  \right\vert ^{2}dx\\
&  \leq c\eta^{2}\left\vert B_{2r}\left(  z\right)  \right\vert =c2^{Q}%
\eta^{2}\left\vert B_{r}\left(  z\right)  \right\vert \\
&  =\varepsilon^{2}\left\vert B_{r}\left(  z\right)  \right\vert .
\end{align*}
For a fixed $\varepsilon,$ we have finally chosen $\eta$ so that $c2^{Q}%
\eta^{2}=\varepsilon^{2}$ and picked the corresponding $\delta$ depending on
$R_{0}$, $\mu$ and $\eta,$ that is on $R_{0},\mu,\varepsilon.$ This finishes
our proof.
\end{proof}

\begin{corollary}
\label{corollary 3.10}For any $\varepsilon>0$, $R_{0}>0,$ there is a small
$\delta=\delta\left(  \varepsilon,R_{0},\mu\right)  >0$ such that for any
$R\leq R_{0}/2,$ $z\in B_{R}\left(  \overline{x}\right)  $, $0<r\leq2R,$ if
$u$ is a weak solution of (\ref{system}) in $B_{11R}\left(  \overline
{x}\right)  \Subset\Omega$, the coefficients $a_{\alpha\beta}^{ij}\left(
x\right)  $ are $\left(  \delta,4r\right)  $-vanishing in $B_{R}\left(
\overline{x}\right)  $ and
\[
\left\vert \left\{  x\in B_{R}\left(  \overline{x}\right)  :\mathcal{M}%
_{B_{11R}\left(  \overline{x}\right)  }\left(  \left\vert Xu\right\vert
^{2}\right)  \left(  x\right)  >N_{1}^{2}\right\}  \cap B_{r}\left(  z\right)
\right\vert \geq\varepsilon\left\vert B_{r}\left(  z\right)  \right\vert ,
\]
then%
\begin{align*}
&  B_{r}\left(  z\right)  \cap B_{R}\left(  \overline{x}\right)  \\
&  \subset\left\{  x\in B_{R}\left(  \overline{x}\right)  :\mathcal{M}%
_{B_{11R}\left(  \overline{x}\right)  }\left(  \left\vert Xu\right\vert
^{2}\right)  \left(  x\right)  >1\right\}  \cup\left\{  x\in B_{R}\left(
\overline{x}\right)  :\mathcal{M}_{B_{11R}\left(  \overline{x}\right)
}\left(  \left\vert \mathbf{F}\right\vert ^{2}\right)  \left(  x\right)
>\delta^{2}\right\}  .
\end{align*}

\end{corollary}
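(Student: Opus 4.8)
The plan is to deduce the corollary as the logical contrapositive of Theorem \ref{main lemma}. Fix $\varepsilon>0$ and $R_0>0$, let $N_1$ be the absolute constant provided by Theorem \ref{main lemma}, and let $\delta=\delta(\varepsilon,R_0,\mu)>0$ be the number associated there to this pair $\varepsilon,R_0$; these are precisely the $N_1$ and $\delta$ appearing in the statement of the corollary. The point to notice is that every structural hypothesis needed to invoke Theorem \ref{main lemma} --- namely $R\leq R_0/2$, $z\in B_R(\overline{x})\subset B_{11R}(\overline{x})\Subset\Omega$, $0<r\leq 2R$, $u$ a weak solution of (\ref{system}) in $B_{11R}(\overline{x})$, and the $(\delta,4r)$-vanishing of the coefficients $a_{\alpha\beta}^{ij}$ in $B_R(\overline{x})$ --- is already among the hypotheses of the corollary. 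Hence the only condition of Theorem \ref{main lemma} still to be secured is the non-emptiness assumption (\ref{3.18}).

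I would then argue by contradiction. Suppose the inclusion asserted in the corollary fails. Then there is a point $x^{\ast}\in B_r(z)\cap B_R(\overline{x})$ that lies in neither $\{x\in B_R(\overline{x}):\mathcal{M}_{B_{11R}(\overline{x})}(|Xu|^2)(x)>1\}$ nor $\{x\in B_R(\overline{x}):\mathcal{M}_{B_{11R}(\overline{x})}(|\mathbf{F}|^2)(x)>\delta^2\}$; that is, $x^{\ast}\in B_r(z)$, $\mathcal{M}_{B_{11R}(\overline{x})}(|Xu|^2)(x^{\ast})\leq 1$ and $\mathcal{M}_{B_{11R}(\overline{x})}(|\mathbf{F}|^2)(x^{\ast})\leq\delta^2$. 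This is exactly the statement that the set in (\ref{3.18}) is non-empty, with $x^{\ast}$ serving as a witness.

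Consequently all hypotheses of Theorem \ref{main lemma} are in force, and that theorem gives (\ref{3.19}), namely
\[
\bigl|\{x\in B_R(\overline{x}):\mathcal{M}_{B_{11R}(\overline{x})}(|Xu|^2)(x)>N_1^2\}\cap B_r(z)\bigr|<\varepsilon\,|B_r(z)|,
\]
which contradicts the standing hypothesis of the corollary that this measure is $\geq\varepsilon|B_r(z)|$. Therefore the inclusion must hold. This deduction is entirely formal and carries no analytic obstacle; the only bookkeeping requiring (minor) care is that the $N_1$ and the function $\delta(\varepsilon,R_0,\mu)$ named in the corollary are taken to be literally those produced by Theorem \ref{main lemma}, and that ``failure of the inclusion'' is precisely the negation of the non-emptiness hypothesis (\ref{3.18}).
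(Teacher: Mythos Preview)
Your argument is correct and is exactly the intended one: the paper states this result as an immediate corollary of Theorem \ref{main lemma} with no separate proof, and the contrapositive reading you give is the natural (and only) way to extract it. One tiny wording slip in your final sentence: failure of the inclusion is equivalent to the \emph{non-emptiness} condition (\ref{3.18}) itself, not to its negation---but your actual argument above handles this correctly.
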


\section{$L^{p}$ estimate on $\left\vert Xu\right\vert $%
\label{section L^p estimates}}

In this section we exploit the local estimates on the maximal function of
$\left\vert Xu\right\vert ^{2}$ proved in the previous section in order to
prove the desired $L^{p}$ bound. The starting point is the following useful
lemma about the estimate of the $L^{p}$ norm of a function by means of its
distribution function.

\begin{lemma}
\label{2.3}(See \cite[p.62]{Ca L}) Let $\theta>0$, $m>1$ be constants,
$p\in\left(  1,\infty\right)  $. Then there exists $c>0$ such that for any
nonnegative and measurable function $f$ in $\Omega$,%
\[
f\in L^{p}\left(  \Omega\right)  \text{ if and only if }S=\underset{l\geq
1}{\sum}m^{lp}\left\vert \left\{  x\in\Omega:f\left(  x\right)  >\theta
m^{l}\right\}  \right\vert <\infty
\]
and%
\[
\frac{1}{c}S\leq\left\Vert f\right\Vert _{L^{p}\left(  \Omega\right)  }%
^{p}\leq c\left(  \left\vert \Omega\right\vert +S\right)  .
\]

\end{lemma}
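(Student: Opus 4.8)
The plan is to deduce both inequalities from the layer-cake (distribution function) identity. Set $\lambda(\alpha)=|\{x\in\Omega:f(x)>\alpha\}|$ for $\alpha\ge0$. Since $f$ is measurable and nonnegative, each level set is measurable, $\lambda$ is nonincreasing on $[0,\infty)$ with $\lambda(0)\le|\Omega|$, and
\[
\|f\|_{L^p(\Omega)}^p=p\int_0^\infty\alpha^{p-1}\lambda(\alpha)\,d\alpha,
\]
an identity valid in $[0,+\infty]$ whether or not $f\in L^p$. The terms occurring in $S$ are exactly the values $\lambda(\theta m^l)$, $l\ge1$, so the whole lemma reduces to comparing this integral with the series $\sum_{l\ge1}m^{lp}\lambda(\theta m^l)$, which I would do by splitting the half-line at the points $\theta m^l$.

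For the upper bound, write $[0,\infty)=[0,\theta m)\cup\bigcup_{l\ge1}[\theta m^l,\theta m^{l+1})$. On $[0,\theta m)$, monotonicity of $\lambda$ gives $\int_0^{\theta m}\alpha^{p-1}\lambda(\alpha)\,d\alpha\le\lambda(0)\frac{(\theta m)^p}{p}\le\frac{(\theta m)^p}{p}|\Omega|$; this is the source of the additive $|\Omega|$ term, which is genuinely needed (a function bounded by $\theta m$ on a set of finite measure has $S=0$ but positive $L^p$ norm). On each block $[\theta m^l,\theta m^{l+1})$ one has $\lambda(\alpha)\le\lambda(\theta m^l)$, and $\int_{\theta m^l}^{\theta m^{l+1}}\alpha^{p-1}\,d\alpha=\frac{\theta^p(m^p-1)}{p}m^{lp}$, so summing over $l\ge1$ and multiplying by $p$ gives $\|f\|_{L^p}^p\le(\theta m)^p|\Omega|+\theta^p(m^p-1)S$, that is, the right-hand inequality with a constant of the form $\max\{(\theta m)^p,\theta^p(m^p-1)\}$. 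In particular $S<\infty$ forces $f\in L^p$.

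For the lower bound I would instead discard $[0,\theta)$ and use the blocks $[\theta m^{l-1},\theta m^l)$, $l\ge1$, which are pairwise disjoint subsets of $[0,\infty)$. On such a block $\alpha<\theta m^l$, so $\lambda(\alpha)\ge\lambda(\theta m^l)$, and $\int_{\theta m^{l-1}}^{\theta m^l}\alpha^{p-1}\,d\alpha=\frac{\theta^p(m^p-1)}{p\,m^p}m^{lp}$; hence $\|f\|_{L^p}^p\ge\frac{\theta^p(m^p-1)}{m^p}S$, which is the left-hand inequality with constant $\frac{m^p}{\theta^p(m^p-1)}$, and it also shows $f\in L^p\Rightarrow S<\infty$. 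Taking $c$ to be the largest of the three constants above proves the statement as written.

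There is no serious obstacle: the argument is entirely elementary once the distribution function identity is in hand. The only points that require a little attention are keeping track of the $[0,\theta m)$ piece so that the harmless $|\Omega|$ term appears, getting the direction of monotonicity right on each dyadic block, and noting that the geometric factor $m^p-1$ never degenerates, which is exactly where the hypothesis $m>1$ is used.
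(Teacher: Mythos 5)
Your proof is correct, and the computations check out (the geometric block integrals, the role of the $[0,\theta m)$ piece producing the $|\Omega|$ term, and the use of $m>1$). The paper itself does not prove this lemma; it only cites \cite[p.62]{Ca L}, and the argument given there is precisely this layer-cake / distribution-function computation, so you have in effect reconstructed the standard proof that the reference supplies.
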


\begin{lemma}
[Vitali]\label{Vitali}Let $%
\mathcal{F}%
$ be a family of $d_{X}$-balls in $\mathbb{R}^{n}$ with bounded radii. There
exists a finite or countable sequence $\left\{  B_{i}\right\}  \subset%
\mathcal{F}%
$ of mutually disjoint balls such that%
\[
\bigcup_{B\in%
\mathcal{F}%
}B\subset\bigcup_{i}5B_{i}%
\]
where $5B$ is the ball with the same center as $B$ and radius five times big.
\end{lemma}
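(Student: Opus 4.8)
The plan is to run the classical greedy/stopping-time selection used for Vitali-type covering lemmas, transplanted to the metric space $\left(\mathbb{R}^{n},d_{X}\right)$. The only structural inputs needed are that $d_{X}$ is a genuine distance, so the triangle inequality holds with constant $1$ (recalled in \S\ref{sec background}), and that any family of pairwise disjoint $d_{X}$-balls is at most countable, since each such ball is open and has positive Lebesgue measure by (\ref{dimension}).

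First I would set $R_{0}=\sup\left\{r(B):B\in\mathcal{F}\right\}<\infty$ and stratify $\mathcal{F}$ by scale: for integers $j\geq 0$ let $\mathcal{F}_{j}$ consist of those $B\in\mathcal{F}$ whose radius lies in $\left(2^{-j-1}R_{0},2^{-j}R_{0}\right]$, so that $\mathcal{F}=\bigcup_{j\geq 0}\mathcal{F}_{j}$. Then I would select balls level by level: choose $\mathcal{G}_{0}$ to be a maximal pairwise disjoint subfamily of $\mathcal{F}_{0}$ (which exists by Zorn's lemma, since the union of a chain of pairwise disjoint families is again pairwise disjoint); and, having chosen $\mathcal{G}_{0},\dots,\mathcal{G}_{j-1}$, let $\mathcal{G}_{j}$ be a maximal pairwise disjoint subfamily among the balls of $\mathcal{F}_{j}$ that are disjoint from every ball already picked in $\mathcal{G}_{0}\cup\cdots\cup\mathcal{G}_{j-1}$. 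Put $\mathcal{G}=\bigcup_{j\geq 0}\mathcal{G}_{j}$; this is a pairwise disjoint subfamily of $\mathcal{F}$, hence finite or countable, and I would enumerate it as $\left\{B_{i}\right\}$.

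The heart of the argument is to check the containment $\bigcup_{B\in\mathcal{F}}B\subset\bigcup_{i}5B_{i}$. Fix $B=B_{r}(x)\in\mathcal{F}$ and let $j$ be such that $B\in\mathcal{F}_{j}$. By the maximality built into the construction, $B$ must meet some selected ball $B'=B_{r'}(y)\in\mathcal{G}_{0}\cup\cdots\cup\mathcal{G}_{j}$: either $B\in\mathcal{G}_{j}$, in which case $B'=B$, or $B$ failed to be added to $\mathcal{G}_{j}$ precisely because it intersects one. In either case $r'>2^{-j-1}R_{0}\geq r/2$, i.e. $r<2r'$. Taking a point $w\in B\cap B'$ and any $z\in B$, the triangle inequality gives $d_{X}(z,y)\leq d_{X}(z,x)+d_{X}(x,w)+d_{X}(w,y)<2r'+2r'+r'=5r'$, where we used $d_{X}(z,x)<r<2r'$, $d_{X}(x,w)<r<2r'$ and $d_{X}(w,y)<r'$; hence $z\in 5B'$, so $B\subset 5B'\subset\bigcup_{i}5B_{i}$, which is the desired inclusion.

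I do not foresee a real obstacle: the computation is clean precisely because $d_{X}$ is a metric rather than a quasidistance, so no extra geometric constant enters the radius-doubling estimate. The only points needing a word of care are the appeal to Zorn's lemma to produce the maximal disjoint subfamilies at each scale, and the remark that countability of $\mathcal{G}$ follows from disjointness together with positivity of Lebesgue measure on open sets; both are routine.
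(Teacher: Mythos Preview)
Your argument is correct and is precisely the standard greedy/dyadic-shell selection used in the Euclidean Vitali lemma, carried over verbatim with $d_{X}$ in place of the Euclidean distance; this is exactly what the paper indicates, since its entire proof reads ``identical to that of the Euclidean case, with the Euclidean distance replaced by $d_{X}$.''
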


The proof is identical to that of the Euclidean case, with the Euclidean
distance replaced by $d_{X}$ here.

\begin{lemma}
\label{Lemma Vitali bis}Let $0<\varepsilon<1$, $C$ and $D$ be two measurable
sets satisfying $C\subset D\subset B_{R}\left(  \overline{x}\right)
\subset\Omega$, $\left\vert C\right\vert <\varepsilon\left\vert B_{R}\left(
\overline{x}\right)  \right\vert $ and the following property:%
\begin{equation}
\forall x\in B_{R}\left(  \overline{x}\right)  ,\forall r\leq2R,\left\vert
C\cap B_{r}\left(  x\right)  \right\vert \geq\varepsilon\left\vert
B_{r}\left(  x\right)  \right\vert \Longrightarrow B_{r}\left(  x\right)  \cap
B_{R}\left(  \overline{x}\right)  \subset D. \label{Hp Vitali}%
\end{equation}
Then
\[
\left\vert C\right\vert \leq\varepsilon\frac{5^{Q}}{c_{d}}\left\vert
D\right\vert ,
\]
where $c_{d}$ is the constants in (\ref{d-regular}).
\end{lemma}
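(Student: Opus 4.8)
The plan is to realize this as a Vitali-covering argument. For $\mu$-a.e.\ point $x\in C$, the Lebesgue differentiation theorem on the space of homogeneous type $(B_R(\overline{x}),d_X,dx)$ gives $\lim_{r\to 0}\frac{|C\cap B_r(x)|}{|B_r(x)|}=1>\varepsilon$, so the set of radii $r$ with $|C\cap B_r(x)|>\varepsilon|B_r(x)|$ is nonempty and bounded (since $|C|<\varepsilon|B_R(\overline{x})|$ forces $r$ small — more precisely, once $r$ is large enough that $B_r(x)\supset B_R(\overline{x})$, or even just $|B_r(x)|\geq |B_R(\overline x)|$, we get $|C\cap B_r(x)|\leq|C|<\varepsilon|B_R(\overline x)|\leq\varepsilon|B_r(x)|$). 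Hence there is a \emph{maximal} such radius $r_x$, and by continuity of $r\mapsto|C\cap B_r(x)|$ we have $|C\cap B_{r_x}(x)|=\varepsilon|B_{r_x}(x)|$ while $|C\cap B_r(x)|<\varepsilon|B_r(x)|$ for $r>r_x$. Note $r_x\leq 2R$: otherwise the displayed inequality would already fail at a radius below $r_x$. The family $\mathcal{F}=\{B_{r_x}(x):x\in C_0\}$, where $C_0\subset C$ is the full-measure subset where the above works, covers $C$ up to a null set.

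Next I would apply the Vitali covering lemma (Lemma \ref{Vitali}) to $\mathcal{F}$ — which is legitimate since the radii are bounded by $2R$ — to extract a countable disjoint subfamily $\{B_{r_i}(x_i)\}$ with $C_0\subset\bigcup_i 5B_{r_i}(x_i)=\bigcup_i B_{5r_i}(x_i)$. Then I estimate, using the defining property $|C\cap B_{r_i}(x_i)|=\varepsilon|B_{r_i}(x_i)|$ together with the disjointness and the scaling $|B_{5r_i}(x_i)|=5^Q|B_{r_i}(x_i)|$ from \eqref{dimension}:
\[
|C|=|C_0|\leq\sum_i|B_{5r_i}(x_i)|=5^Q\sum_i|B_{r_i}(x_i)|=\frac{5^Q}{\varepsilon}\sum_i|C\cap B_{r_i}(x_i)|.
\]
This last step is a bit too lossy as written; the right move is instead to bound each $|B_{5r_i}(x_i)|$ by a constant times $|B_{r_i}(x_i)\cap B_R(\overline x)\cap D|$. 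For that I invoke hypothesis \eqref{Hp Vitali}: since $x_i\in C\subset B_R(\overline x)$, $r_i\leq 2R$, and $|C\cap B_{r_i}(x_i)|=\varepsilon|B_{r_i}(x_i)|\geq\varepsilon|B_{r_i}(x_i)|$, we get $B_{r_i}(x_i)\cap B_R(\overline x)\subset D$. Combined with the $d_X$-regularity \eqref{d-regular} of $B_R(\overline x)$, which gives $|B_{r_i}(x_i)\cap B_R(\overline x)|\geq c_d|B_{r_i}(x_i)|$, we obtain $|B_{r_i}(x_i)\cap D|\geq c_d|B_{r_i}(x_i)|$, and since the $B_{r_i}(x_i)$ are disjoint so are the sets $B_{r_i}(x_i)\cap D$, all contained in $D$. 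Therefore
\[
|C|\leq\sum_i|B_{5r_i}(x_i)|=5^Q\sum_i|B_{r_i}(x_i)|\leq\frac{5^Q}{c_d}\sum_i|B_{r_i}(x_i)\cap D|\leq\frac{5^Q}{c_d}|D|.
\]

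Wait — that drops the factor $\varepsilon$. To keep it, I should not bound $|B_{5r_i}(x_i)|$ by $|B_{r_i}(x_i)\cap D|$ but rather combine the two estimates multiplicatively: from $|C\cap B_{r_i}(x_i)|=\varepsilon|B_{r_i}(x_i)|$ and $|B_{r_i}(x_i)\cap D|\geq c_d|B_{r_i}(x_i)|$ we get $|B_{r_i}(x_i)|\leq\frac{1}{c_d}|B_{r_i}(x_i)\cap D|$, hence $|B_{5r_i}(x_i)|=5^Q|B_{r_i}(x_i)|\leq\frac{5^Q\varepsilon}{c_d}\cdot\frac{|B_{r_i}(x_i)|}{\varepsilon}$; the cleanest route is $|C|\leq\sum_i 5^Q|B_{r_i}(x_i)|$ and, separately, $|C\cap B_{r_i}(x_i)|=\varepsilon|B_{r_i}(x_i)|$ is \emph{not} what I want to sum. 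Instead sum $|B_{r_i}(x_i)\cap D|\leq|D|$ and write $|B_{5r_i}(x_i)|=5^Q|B_{r_i}(x_i)|$; then use $\varepsilon|B_{r_i}(x_i)|=|C\cap B_{r_i}(x_i)|$ only to confirm the hypothesis of \eqref{Hp Vitali} holds with the \emph{non-strict} inequality. The factor $\varepsilon$ in the conclusion must come from the maximality of $r_i$: for $r$ slightly above $r_i$, $|C\cap B_r(x_i)|<\varepsilon|B_r(x_i)|$, and one lets $r\downarrow r_i$ together with the disjointness to get $\sum_i|B_{r_i}(x_i)|\leq\frac{1}{\varepsilon}|C\cap(\text{enlargement})|$ — this is the subtle point. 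The honest statement: since $B_{r_i}(x_i)\cap B_R(\overline x)\subset D$ and $|B_{r_i}(x_i)\cap B_R(\overline x)|\geq c_d|B_{r_i}(x_i)|$, the disjoint sets $D_i:=B_{r_i}(x_i)\cap B_R(\overline x)\subset D$ satisfy $\sum_i|D_i|\leq|D|$ and $\sum_i|D_i|\geq c_d\sum_i|B_{r_i}(x_i)|$, whence $\sum_i|B_{r_i}(x_i)|\leq|D|/c_d$; and $|C\cap B_{r_i}(x_i)|=\varepsilon|B_{r_i}(x_i)|$ gives $|C|=|C_0|\leq\sum_i|C\cap B_{5r_i}(x_i)|$, but $|C\cap B_{5r_i}(x_i)|\leq|B_{5r_i}(x_i)|=5^Q|B_{r_i}(x_i)|$, so $|C|\leq 5^Q\sum_i|B_{r_i}(x_i)|\leq\frac{5^Q}{c_d}|D|$; the $\varepsilon$ is then inserted by noting $|C|=\sum_i|C\cap B_{5r_i}(x_i)|$ can be re-examined — \textbf{the genuine obstacle, and the place I must be careful, is exactly how the factor $\varepsilon$ survives}: it comes from replacing $|C\cap B_{5r_i}(x_i)|$ by $5^Q|C\cap B_{r_i}(x_i)|$ via the maximality (so that the density of $C$ in $B_{5r_i}$ is controlled), giving $|C|\leq 5^Q\sum_i|C\cap B_{r_i}(x_i)|=5^Q\varepsilon\sum_i|B_{r_i}(x_i)|\leq\frac{5^Q\varepsilon}{c_d}|D|$, which is the claim. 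I would write the final version carefully following this last chain.
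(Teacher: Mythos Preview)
Your final chain of inequalities is correct and is exactly the paper's argument: choose $r_x$ maximal so that $|C\cap B_{r_x}(x)|\geq\varepsilon|B_{r_x}(x)|$ and $|C\cap B_r(x)|<\varepsilon|B_r(x)|$ for $r>r_x$, apply Vitali, then use the maximality at $5r_i>r_i$ to get $|C\cap B_{5r_i}(x_i)|<\varepsilon\,5^Q|B_{r_i}(x_i)|$ (this is where the factor $\varepsilon$ enters), and finish with $d_X$-regularity and the hypothesis \eqref{Hp Vitali} on the disjoint balls. The many false starts in your write-up should be excised---the paper's proof goes directly from $|C|\le\sum_k|C\cap B_{5r_{x_k}}(x_k)|<\varepsilon 5^Q\sum_k|B_{r_{x_k}}(x_k)|\le\frac{\varepsilon 5^Q}{c_d}\sum_k|B_{r_{x_k}}(x_k)\cap B_R(\overline{x})|\le\frac{\varepsilon 5^Q}{c_d}|D|$ without the detour through $|C\cap B_{r_i}(x_i)|$.
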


\begin{proof}
For any $x\in C$, $C\subset B_{R}\left(  \overline{x}\right)  \subset
B_{2R}\left(  x\right)  $, hence%
\[
\left\vert C\cap B_{2R}\left(  x\right)  \right\vert =\left\vert C\right\vert
<\varepsilon\left\vert B_{R}\left(  \overline{x}\right)  \right\vert
<\varepsilon\left\vert B_{2R}\left(  x\right)  \right\vert .
\]
On the other hand, by Lebesgue differentiation theorem, for a.e. $x\in C,$%
\[
\lim_{r\rightarrow0}\frac{\left\vert C\cap B_{r}\left(  x\right)  \right\vert
}{\left\vert B_{r}\left(  x\right)  \right\vert }=1,
\]
hence for a.e. $x\in C$ there is an $r_{x}\leq2R$ such that for all
$r\in\left(  r_{x},2R\right)  $ it holds%
\begin{equation}
\left\vert C\cap B_{r_{x}}\left(  x\right)  \right\vert \geq\varepsilon
\left\vert B_{r_{x}}\left(  x\right)  \right\vert \text{ and }\left\vert C\cap
B_{r}\left(  x\right)  \right\vert <\varepsilon\left\vert B_{r}\left(
x\right)  \right\vert . \label{Vitali bis}%
\end{equation}
By Lemma \ref{Vitali}, there are $x_{1},$ $x_{2},\ldots\in C$, such that
$B_{r_{x_{1}}}\left(  x_{1}\right)  $, $B_{r_{x_{2}}}\left(  x_{2}\right)
,\ldots$ are mutually disjoint and satisfy%
\[%
{\displaystyle\bigcup\limits_{k}}
B_{5r_{x_{k}}}\left(  x_{k}\right)  \cap B_{R}\left(  \overline{x}\right)
\supset C.
\]

By (\ref{Vitali bis}) and (\ref{dimension}), we know
\[
\left\vert C\cap B_{5r_{x_{k}}}\left(  x_{k}\right)  \right\vert
<\varepsilon\left\vert B_{5r_{x_{k}}}\left(  x_{k}\right)  \right\vert
=\varepsilon5^{Q}\left\vert B_{r_{x_{k}}}\left(  x_{k}\right)  \right\vert .
\]
Also,
\begin{align*}
\left\vert C\right\vert  &  =\left\vert
{\displaystyle\bigcup\limits_{k}}
B_{5r_{x_{k}}}\left(  x_{k}\right)  \cap C\right\vert \leq\sum_{k}\left\vert
B_{5r_{x_{k}}}\left(  x_{k}\right)  \cap C\right\vert \\
&  \leq\varepsilon5^{Q}\sum_{k}\left\vert B_{r_{x_{k}}}\left(  x_{k}\right)
\right\vert \\
&  \leq\varepsilon\frac{5^{Q}}{c_{d}}\sum_{k}\left\vert B_{r_{x_{k}}}\left(
x_{k}\right)  \cap B_{R}\left(  \overline{x}\right)  \right\vert ,
\end{align*}
where the last inequality follows since $B_{R}\left(  \overline{x}\right)  $
is $d_{X}$-regular (see Remark \ref{d-regular and homegeneous}). Moreover
since the $B_{r_{x_{k}}}\left(  x_{k}\right)  $ are mutually disjoint the last
quantity equals%
\[
=\varepsilon\frac{5^{Q}}{c_{d}}\left\vert
{\displaystyle\bigcup\limits_{k}}
\left(  B_{r_{x_{k}}}\left(  x_{k}\right)  \cap B_{R}\left(  \overline
{x}\right)  \right)  \right\vert \leq\varepsilon\frac{5^{Q}}{c_{d}}\left\vert
D\right\vert ,
\]
since, by assumption (\ref{Hp Vitali}), $B_{r_{x_{k}}}\left(  x_{k}\right)
\cap B_{R}\left(  \overline{x}\right)  \subset D$. This completes the proof.
\end{proof}

\begin{theorem}
\label{Thm up to last}For any $\varepsilon>0$, $R_{0}>0$ there is a small
$\delta=\delta\left(  \varepsilon,R_{0},\mu\right)  >0$ such that for any
$R\leq R_{0}/2,$ if $u$ is a weak solution of (\ref{system}) in $B_{11R}%
\left(  \overline{x}\right)  \Subset\Omega$, the coefficients $a_{\alpha\beta
}^{ij}\left(  x\right)  $ are $\left(  \delta,8R\right)  $-vanishing in
$B_{R}\left(  \overline{x}\right)  $ and%
\begin{equation}
\left\vert \left\{  x\in B_{R}\left(  \overline{x}\right)  :\mathcal{M}%
_{B_{11R}\left(  \overline{x}\right)  }\left(  \left\vert Xu\right\vert
^{2}\right)  \left(  x\right)  >N_{1}^{2}\right\}  \right\vert <\varepsilon
\left\vert B_{R}\left(  \overline{x}\right)  \right\vert \label{Hp th fin}%
\end{equation}
(where $N_{1}$ is like in Theorem \ref{main lemma}), then for any positive
integer $m$,
\begin{align*}
&  \left\vert \left\{  x\in B_{R}\left(  \overline{x}\right)  :\mathcal{M}%
_{B_{11R}\left(  \overline{x}\right)  }\left(  \left\vert Xu\right\vert
^{2}\right)  \left(  x\right)  >N_{1}^{2m}\right\}  \right\vert \\
&  \leq\sum_{i=1}^{m}\varepsilon_{1}^{i}\left\vert \left\{  x\in B_{R}\left(
\overline{x}\right)  :\mathcal{M}_{B_{11R}\left(  \overline{x}\right)
}\left(  \left\vert \mathbf{F}\right\vert ^{2}\right)  \left(  x\right)
>\delta^{2}N_{1}^{2\left(  m-i\right)  }\right\}  \right\vert \\
&  +\varepsilon_{1}^{m}\left\vert \left\{  x\in B_{R}\left(  \overline
{x}\right)  :\mathcal{M}_{B_{11R}\left(  \overline{x}\right)  }\left(
\left\vert Xu\right\vert ^{2}\right)  \left(  x\right)  >1\right\}
\right\vert .
\end{align*}
where $\varepsilon_{1}=\varepsilon5^{Q}/c_{d}$.
\end{theorem}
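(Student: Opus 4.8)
The plan is to prove the inequality by induction on $m$, reducing everything to Corollary \ref{corollary 3.10} (which packages the approximation by constant--coefficient systems, the $L^\infty$ gradient bound, and the maximal function estimate) together with the measure--theoretic Lemma \ref{Lemma Vitali bis}. Throughout, write $\mathcal{M}=\mathcal{M}_{B_{11R}\left(\overline{x}\right)}$ and, for every integer $k\geq 0$, set
\[
C_k=\left\{x\in B_R\left(\overline{x}\right):\mathcal{M}\left(\left\vert Xu\right\vert^2\right)(x)>N_1^{2k}\right\},\qquad E_k=\left\{x\in B_R\left(\overline{x}\right):\mathcal{M}\left(\left\vert\mathbf{F}\right\vert^2\right)(x)>\delta^2 N_1^{2k}\right\}.
\]
Taking for $\delta$ exactly the number furnished by Corollary \ref{corollary 3.10} (for the given $\varepsilon,R_0,\mu$), the claim becomes $\left\vert C_m\right\vert\leq\varepsilon_1^m\left\vert C_0\right\vert+\sum_{i=1}^m\varepsilon_1^i\left\vert E_{m-i}\right\vert$, and the heart of the matter is the one--step recursion
\[
\left\vert C_k\right\vert\leq\varepsilon_1\left(\left\vert C_{k-1}\right\vert+\left\vert E_{k-1}\right\vert\right),\qquad k=1,\dots,m.
\]

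To establish the recursion I would fix $k$ and apply Lemma \ref{Lemma Vitali bis} with $C=C_k$ and $D=C_{k-1}\cup E_{k-1}$, both subsets of $B_R\left(\overline{x}\right)$. Three facts must be checked. First, $C_k\subset D$: since $N_1^2\geq 5^Q/c_d>1$ we have $N_1^{2k}\geq N_1^{2(k-1)}$, hence $C_k\subset C_{k-1}\subset D$. Second, $\left\vert C_k\right\vert<\varepsilon\left\vert B_R\left(\overline{x}\right)\right\vert$: the sets $C_k$ decrease in $k$ and $\left\vert C_1\right\vert<\varepsilon\left\vert B_R\left(\overline{x}\right)\right\vert$ is precisely hypothesis (\ref{Hp th fin}), so this holds for all $k\geq 1$. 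Third, the implication (\ref{Hp Vitali}): given $x\in B_R\left(\overline{x}\right)$ and $r\leq 2R$ with $\left\vert C_k\cap B_r(x)\right\vert\geq\varepsilon\left\vert B_r(x)\right\vert$, I must show $B_r(x)\cap B_R\left(\overline{x}\right)\subset D$. Here I would rescale: the function $w=u/N_1^{k-1}$ is a weak solution of (\ref{system}) in $B_{11R}\left(\overline{x}\right)$ with datum $\mathbf{G}=\mathbf{F}/N_1^{k-1}$, and from $\mathcal{M}\left(\left\vert\lambda g\right\vert^2\right)=\lambda^2\mathcal{M}\left(\left\vert g\right\vert^2\right)$ one gets $C_k=\left\{\mathcal{M}\left(\left\vert Xw\right\vert^2\right)>N_1^2\right\}$ and $D=\left\{\mathcal{M}\left(\left\vert Xw\right\vert^2\right)>1\right\}\cup\left\{\mathcal{M}\left(\left\vert\mathbf{G}\right\vert^2\right)>\delta^2\right\}$. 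Moreover, since $r\leq 2R$ we have $4r\leq 8R$, so by monotonicity of the oscillation function $\eta$ in its argument the $\left(\delta,8R\right)$--vanishing hypothesis implies that the coefficients are $\left(\delta,4r\right)$--vanishing in $B_R\left(\overline{x}\right)$. Hence Corollary \ref{corollary 3.10} applies to $w$ on the ball $B_r(x)$ and yields exactly $B_r(x)\cap B_R\left(\overline{x}\right)\subset D$. Lemma \ref{Lemma Vitali bis} then gives $\left\vert C_k\right\vert\leq\left(\varepsilon 5^Q/c_d\right)\left\vert D\right\vert=\varepsilon_1\left\vert C_{k-1}\cup E_{k-1}\right\vert\leq\varepsilon_1\left(\left\vert C_{k-1}\right\vert+\left\vert E_{k-1}\right\vert\right)$.

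With the recursion in hand, iterating gives
\[
\left\vert C_m\right\vert\leq\varepsilon_1\left\vert C_{m-1}\right\vert+\varepsilon_1\left\vert E_{m-1}\right\vert\leq\varepsilon_1^2\left\vert C_{m-2}\right\vert+\varepsilon_1^2\left\vert E_{m-2}\right\vert+\varepsilon_1\left\vert E_{m-1}\right\vert\leq\cdots\leq\varepsilon_1^m\left\vert C_0\right\vert+\sum_{i=1}^m\varepsilon_1^i\left\vert E_{m-i}\right\vert,
\]
which is the asserted inequality (the last step being a trivial induction on $m$, with base case $k=1$ handled by the recursion with $w=u$). I do not anticipate a genuine obstacle: all the analytic content is already in Corollary \ref{corollary 3.10}, and what remains is the bookkeeping of the rescaling $u\mapsto u/N_1^{k-1}$ and the purely measure--theoretic iteration. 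The one point deserving a little care is that the \emph{same} $\delta=\delta\left(\varepsilon,R_0,\mu\right)$ must work at every level $k$; this is automatic because Corollary \ref{corollary 3.10} is invoked each time with the fixed parameters $\varepsilon,R_0,\mu$ and with a radius $r\leq 2R$, for which the required $\left(\delta,4r\right)$--vanishing follows, uniformly in $k$, from the $\left(\delta,8R\right)$--vanishing assumption.
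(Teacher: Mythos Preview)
Your proposal is correct and follows essentially the same approach as the paper: both arguments choose $\delta$ from Corollary \ref{corollary 3.10}, rescale $u$ by powers of $N_1$, and feed Corollary \ref{corollary 3.10} into Lemma \ref{Lemma Vitali bis} to obtain the decay. The only difference is organizational: the paper runs a formal induction on $m$ (applying the inductive hypothesis to $u_1=u/N_1$ and then invoking the $m=1$ case for the residual term), whereas you establish the one--step recursion $\left\vert C_k\right\vert\leq\varepsilon_1\left(\left\vert C_{k-1}\right\vert+\left\vert E_{k-1}\right\vert\right)$ for every $k$ and then iterate---the content is identical.
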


\begin{proof}
Fix $\varepsilon,R_{0}>0$ and pick $\delta=\delta\left(  \varepsilon,R_{0}%
,\mu\right)  $ as in Corollary \ref{corollary 3.10}. We will prove this
assertion by induction on $m$. For $m=1$, we want to apply Lemma
\ref{Lemma Vitali bis} to
\begin{align*}
C  &  :=\left\{  x\in B_{R}\left(  \overline{x}\right)  :\mathcal{M}%
_{B_{11R}\left(  \overline{x}\right)  }\left(  \left\vert Xu\right\vert
^{2}\right)  \left(  x\right)  >N_{1}^{2}\right\}  ,\\
D  &  :=\left\{  x\in B_{R}\left(  \overline{x}\right)  :\mathcal{M}%
_{B_{11R}\left(  \overline{x}\right)  }\left(  \left\vert \mathbf{F}%
\right\vert ^{2}\right)  \left(  x\right)  >\delta^{2}\right\}  \cup\left\{
x\in B_{R}\left(  \overline{x}\right)  :\mathcal{M}_{B_{11R}\left(
\overline{x}\right)  }\left(  \left\vert Xu\right\vert ^{2}\right)  \left(
x\right)  >1\right\}  .
\end{align*}
Since $N_{1}\geq1$, $C\subset D\subset B_{R}\left(  \overline{x}\right)  $.
Also, by assumption $\left\vert C\right\vert <\varepsilon\left\vert
B_{R}\left(  \overline{x}\right)  \right\vert $. Let $x\in B_{R}\left(
\overline{x}\right)  $ such that
\[
\left\vert C\cap B_{r}\left(  x\right)  \right\vert \geq\varepsilon\left\vert
B_{r}\left(  x\right)  \right\vert .
\]
Then by Corollary \ref{corollary 3.10}
\[
B_{r}\left(  x\right)  \cap B_{R}\left(  \overline{x}\right)  \subset D,
\]
hence by Lemma \ref{Lemma Vitali bis}
\[
\left\vert C\right\vert \leq\varepsilon\frac{5^{Q}}{c_{d}}\left\vert
D\right\vert
\]
which is our assertion for $m=1$.

Now assume the assertion is valid for some $m$. Let $u$ be a weak solution to
(\ref{system}) in $B_{11R}\left(  \overline{x}\right)  $ satisfying
(\ref{Hp th fin}). Set $u_{1}=u/N_{1}$ and $\mathbf{F}_{1}=\mathbf{F}/N_{1}$,
then $u_{1}$ is a weak solution of
\[
X_{i}\left(  a_{\alpha\beta}^{ij}\left(  x\right)  X_{j}u_{1}\right)
=X_{i}\mathbf{F}_{1}%
\]
in $B_{11R}\left(  \overline{x}\right)  \Subset\Omega$, and satisfies%
\begin{align*}
&  \left\vert \left\{  x\in B_{R}\left(  \overline{x}\right)  :\mathcal{M}%
_{B_{11R}\left(  \overline{x}\right)  }\left(  \left\vert Xu_{1}\right\vert
^{2}\right)  \left(  x\right)  >N_{1}^{2}\right\}  \right\vert \\
&  =\left\vert \left\{  x\in B_{R}\left(  \overline{x}\right)  :\mathcal{M}%
_{B_{11R}\left(  \overline{x}\right)  }\left(  \left\vert Xu\right\vert
^{2}\right)  \left(  x\right)  >N_{1}^{4}\right\}  \right\vert \\
&  <\left\vert \left\{  x\in B_{R}\left(  \overline{x}\right)  :\mathcal{M}%
_{B_{11R}\left(  \overline{x}\right)  }\left(  \left\vert Xu\right\vert
^{2}\right)  \left(  x\right)  >N_{1}^{2}\right\}  \right\vert <\varepsilon
\left\vert B_{R}\left(  \overline{x}\right)  \right\vert .
\end{align*}
By the induction assumption on $m$, we have%
\begin{align}
&  \left\vert \left\{  x\in B_{R}\left(  \overline{x}\right)  :\mathcal{M}%
_{B_{11R}\left(  \overline{x}\right)  }\left(  \left\vert Xu\right\vert
^{2}\right)  \left(  x\right)  >N_{1}^{2\left(  m+1\right)  }\right\}
\right\vert \nonumber\\
&  =\left\vert \left\{  x\in B_{R}\left(  \overline{x}\right)  :\mathcal{M}%
_{B_{11R}\left(  \overline{x}\right)  }\left(  \left\vert Xu_{1}\right\vert
^{2}\right)  \left(  x\right)  >N_{1}^{2m}\right\}  \right\vert \nonumber\\
&  \leq\sum_{i=1}^{m}\varepsilon_{1}^{i}\left\vert \left\{  x\in B_{R}\left(
\overline{x}\right)  :\mathcal{M}_{B_{11R}\left(  \overline{x}\right)
}\left(  \left\vert \mathbf{F}_{1}\right\vert ^{2}\right)  \left(  x\right)
>\delta^{2}N_{1}^{2\left(  m-i\right)  }\right\}  \right\vert \nonumber\\
&  +\varepsilon_{1}^{m}\left\vert \left\{  x\in B_{R}\left(  \overline
{x}\right)  :\mathcal{M}_{B_{11R}\left(  \overline{x}\right)  }\left(
\left\vert Xu_{1}\right\vert ^{2}\right)  \left(  x\right)  >1\right\}
\right\vert \nonumber\\
&  =\sum_{i=1}^{m}\varepsilon_{1}^{i}\left\vert \left\{  x\in B_{R}\left(
\overline{x}\right)  :\mathcal{M}_{B_{11R}\left(  \overline{x}\right)
}\left(  \left\vert \mathbf{F}\right\vert ^{2}\right)  \left(  x\right)
>\delta^{2}N_{1}^{2\left(  m+1-i\right)  }\right\}  \right\vert \nonumber\\
&  +\varepsilon_{1}^{m}\left\vert \left\{  x\in B_{R}\left(  \overline
{x}\right)  :\mathcal{M}_{B_{11R}\left(  \overline{x}\right)  }\left(
\left\vert Xu\right\vert ^{2}\right)  \left(  x\right)  >N_{1}^{2}\right\}
\right\vert . \label{fin 1}%
\end{align}
On the other hand, by the assertion valid for $m=1,$%
\begin{align}
&  \left\vert \left\{  x\in B_{R}\left(  \overline{x}\right)  :\mathcal{M}%
_{B_{11R}\left(  \overline{x}\right)  }\left(  \left\vert Xu\right\vert
^{2}\right)  \left(  x\right)  >N_{1}^{2}\right\}  \right\vert \nonumber\\
&  \leq\varepsilon_{1}\left\vert \left\{  x\in B_{R}\left(  \overline
{x}\right)  :\mathcal{M}_{B_{11R}\left(  \overline{x}\right)  }\left(
\left\vert \mathbf{F}\right\vert ^{2}\right)  \left(  x\right)  >\delta
^{2}\right\}  \right\vert \nonumber\\
&  +\varepsilon_{1}\left\vert \left\{  x\in B_{R}\left(  \overline{x}\right)
:\mathcal{M}_{B_{11R}\left(  \overline{x}\right)  }\left(  \left\vert
Xu\right\vert ^{2}\right)  \left(  x\right)  >1\right\}  \right\vert .
\label{fin 2}%
\end{align}
Putting (\ref{fin 2}) into (\ref{fin 1}) we get%
\[
\left\vert \left\{  x\in B_{R}\left(  \overline{x}\right)  :\mathcal{M}%
_{B_{11R}\left(  \overline{x}\right)  }\left(  \left\vert Xu\right\vert
^{2}\right)  \left(  x\right)  >N_{1}^{2\left(  m+1\right)  }\right\}
\right\vert
\]%
\begin{align*}
&  \leq\sum_{i=1}^{m}\varepsilon_{1}^{i}\left\vert \left\{  x\in B_{R}\left(
\overline{x}\right)  :\mathcal{M}_{B_{11R}\left(  \overline{x}\right)
}\left(  \left\vert \mathbf{F}\right\vert ^{2}\right)  \left(  x\right)
>\delta^{2}N_{1}^{2\left(  m+1-i\right)  }\right\}  \right\vert \\
&  +\varepsilon_{1}^{m+1}\left\vert \left\{  x\in B_{R}\left(  \overline
{x}\right)  :\mathcal{M}_{B_{11R}\left(  \overline{x}\right)  }\left(
\left\vert Xu\right\vert ^{2}\right)  \left(  x\right)  >1\right\}
\right\vert \\
&  +\varepsilon_{1}^{m+1}\left\vert \left\{  x\in B_{R}\left(  \overline
{x}\right)  :\mathcal{M}_{B_{11R}\left(  \overline{x}\right)  }\left(
\left\vert \mathbf{F}\right\vert ^{2}\right)  \left(  x\right)  >\delta
^{2}\right\}  \right\vert
\end{align*}%
\begin{align*}
&  =\sum_{i=1}^{m+1}\varepsilon_{1}^{i}\left\vert \left\{  x\in B_{R}\left(
\overline{x}\right)  :\mathcal{M}_{B_{11R}\left(  \overline{x}\right)
}\left(  \left\vert \mathbf{F}\right\vert ^{2}\right)  \left(  x\right)
>\delta^{2}N_{1}^{2\left(  m+1-i\right)  }\right\}  \right\vert \\
&  +\varepsilon_{1}^{m+1}\left\vert \left\{  x\in B_{R}\left(  \overline
{x}\right)  :\mathcal{M}_{B_{11R}\left(  \overline{x}\right)  }\left(
\left\vert Xu\right\vert ^{2}\right)  \left(  x\right)  >1\right\}
\right\vert
\end{align*}
which is the desired assertion for $m+1$. This completes the proof.
\end{proof}

We can finally come to the

\bigskip

\begin{proof}
[Proof of Theorem \ref{Thm main basic}]Fix $R_{0}$, let $\varepsilon>0$ to be
chosen later, and pick $\delta=\delta\left(  \varepsilon,R_{0},\mu\right)  $
as in Theorem \ref{Thm up to last}. For $\lambda>0,$ let $u_{\lambda}=\frac
{u}{\lambda},\mathbf{F}_{\lambda}=\frac{\mathbf{F}}{\lambda}$. We claim that
we can take $\lambda$ large enough (depending on $\varepsilon,u$ and
$\mathbf{F}$) so that%
\begin{equation}
\left\vert \left\{  x\in B_{R}\left(  \overline{x}\right)  :\mathcal{M}%
_{B_{11R}\left(  \overline{x}\right)  }\left(  \left\vert Xu_{\lambda
}\right\vert ^{2}\right)  \left(  x\right)  >N_{1}^{2}\right\}  \right\vert
<\varepsilon\left\vert B_{R}\left(  \overline{x}\right)  \right\vert
\label{3.31}%
\end{equation}
and
\begin{equation}
\sum_{k=1}^{\infty}N_{1}^{kp}\left\vert \left\{  x\in B_{R}\left(
\overline{x}\right)  :\mathcal{M}_{B_{11R}\left(  \overline{x}\right)
}\left(  \left\vert \mathbf{F}_{\lambda}\right\vert ^{2}\right)  \left(
x\right)  >\delta N_{1}^{2k}\right\}  \right\vert \leq1. \label{3.32}%
\end{equation}
Actually, since $\mathbf{F}\in L^{p}\left(  B_{11R}\left(  \overline
{x}\right)  ;\mathbb{M}^{N\times q}\right)  $ with $p>2$, we have
$\mathcal{M}_{B_{11R}\left(  \overline{x}\right)  }\left(  \left\vert
\mathbf{F}_{\lambda}\right\vert ^{2}\right)  \left(  x\right)  \in L^{\frac
{p}{2}}\left(  B_{11R}\left(  \overline{x}\right)  \right)  $ by Lemma
\ref{maximal function}. Applying Lemma \ref{2.3} with $f=$ $\mathcal{M}%
_{B_{11R}\left(  \overline{x}\right)  }\left(  \left\vert \mathbf{F}_{\lambda
}\right\vert ^{2}\right)  $, $\theta=\delta$, $m=N_{1}^{2}$, $\Omega
=B_{R}\left(  \overline{x}\right)  $ and $p$ replaced by $p/2$, there is a
positive constant $c$ depending only on $\delta,p$ and $N_{1}$, such that%
\begin{align*}
&  \sum_{k=1}^{\infty}N_{1}^{kp}\left\vert \left\{  x\in B_{R}\left(
\overline{x}\right)  :\mathcal{M}_{B_{11R}\left(  \overline{x}\right)
}\left(  \left\vert \mathbf{F}_{\lambda}\right\vert ^{2}\right)  \left(
x\right)  >\delta N_{1}^{2k}\right\}  \right\vert \\
&  \leq c\left\Vert \mathcal{M}_{B_{11R}\left(  \overline{x}\right)  }\left(
\left\vert \mathbf{F}_{\lambda}\right\vert ^{2}\right)  \right\Vert
_{L^{p/2}\left(  B_{11R}\left(  \overline{x}\right)  \right)  }^{p/2}\leq
c\left\Vert \mathbf{F}_{\lambda}\right\Vert _{L^{p}\left(  B_{11R}\left(
\overline{x}\right)  \right)  }^{p}.
\end{align*}
Also, by Lemma \ref{maximal function} we have%
\begin{align*}
&  \left\vert \left\{  x\in B_{R}\left(  \overline{x}\right)  :\mathcal{M}%
_{B_{11R}\left(  \overline{x}\right)  }\left(  \left\vert Xu_{\lambda
}\right\vert ^{2}\right)  \left(  x\right)  >N_{1}^{2}\right\}  \right\vert \\
&  =\left\vert \left\{  x\in B_{R}\left(  \overline{x}\right)  :\mathcal{M}%
_{B_{11R}\left(  \overline{x}\right)  }\left(  \left\vert Xu\right\vert
^{2}\right)  \left(  x\right)  >\lambda^{2}N_{1}^{2}\right\}  \right\vert
\leq\frac{c}{\lambda^{2}N_{1}^{2}}\left\Vert Xu\right\Vert _{L^{2}\left(
B_{11R}\left(  \overline{x}\right)  \right)  }^{2}%
\end{align*}
Hence we can take
\begin{equation}
\lambda=c\left(  \frac{\left\Vert Xu\right\Vert _{L^{2}\left(  B_{11R}\left(
\overline{x}\right)  ;\mathbb{R}^{N}\right)  }}{\varepsilon^{1/2}\left\vert
B_{R}\left(  \overline{x}\right)  \right\vert ^{1/2}}+\left\Vert
\mathbf{F}\right\Vert _{L^{p}\left(  B_{11R}\left(  \overline{x}\right)
\right)  }\right)  \label{lambda}%
\end{equation}
for some constant $c$ depending on $\delta,p,N_{1},$ hence $c=\left(
\varepsilon,R_{0},p,\mathbb{G}\right)  $, and get (\ref{3.31}) and
(\ref{3.32}) satisfied.

Next, by (\ref{3.31}) we can apply Theorem \ref{Thm up to last} to
$u_{\lambda}$ for this large $\lambda$, writing%
\begin{align*}
&  \sum_{k=1}^{\infty}N_{1}^{kp}\left\vert \left\{  x\in B_{R}\left(
\overline{x}\right)  :\mathcal{M}_{B_{11R}\left(  \overline{x}\right)
}\left(  \left\vert Xu_{\lambda}\right\vert ^{2}\right)  \left(  x\right)
>N_{1}^{2k}\right\}  \right\vert \\
&  \leq\sum_{k=1}^{\infty}N_{1}^{kp}\left(  \sum_{i=1}^{k}\varepsilon_{1}%
^{i}\left\vert \left\{  x\in B_{R}\left(  \overline{x}\right)  :\mathcal{M}%
_{B_{11R}\left(  \overline{x}\right)  }\left(  \left\vert \mathbf{F}_{\lambda
}\right\vert ^{2}\right)  \left(  x\right)  >\delta^{2}N_{1}^{2\left(
k-i\right)  }\right\}  \right\vert \right.  \\
&  \left.  +\varepsilon_{1}^{k}\left\vert \left\{  x\in B_{R}\left(
\overline{x}\right)  :\mathcal{M}_{B_{11R}\left(  \overline{x}\right)
}\left(  \left\vert Xu_{\lambda}\right\vert ^{2}\right)  \left(  x\right)
>1\right\}  \right\vert \right)  \\
&  =\sum_{i=1}^{\infty}\left(  N_{1}^{p}\varepsilon_{1}\right)  ^{i}\sum
_{k=i}^{\infty}N_{1}^{p\left(  k-i\right)  }\left\vert \left\{  x\in
B_{R}\left(  \overline{x}\right)  :\mathcal{M}_{B_{11R}\left(  \overline
{x}\right)  }\left(  \left\vert \mathbf{F}_{\lambda}\right\vert ^{2}\right)
\left(  x\right)  >\delta^{2}N_{1}^{2\left(  k-i\right)  }\right\}
\right\vert \\
+ &  \sum_{i=1}^{\infty}\left(  N_{1}^{p}\varepsilon_{1}\right)
^{i}\left\vert \left\{  x\in B_{R}\left(  \overline{x}\right)  :\mathcal{M}%
_{B_{11R}\left(  \overline{x}\right)  }\left(  \left\vert Xu_{\lambda
}\right\vert ^{2}\right)  \left(  x\right)  >1\right\}  \right\vert
\end{align*}
by (\ref{3.32})%
\[
=\sum_{i=1}^{\infty}\left(  N_{1}^{p}\varepsilon_{1}\right)  ^{i}\left(
1+\left\vert B_{R}\left(  \overline{x}\right)  \right\vert \right)
<1+\left\vert B_{R}\left(  \overline{x}\right)  \right\vert
\]
taking $\varepsilon$ so that $N_{1}^{p}\varepsilon_{1}=1/2$. We have finally
chosen $\varepsilon$ small enough, depending on $p$ and $\mathbb{G}$, and a
corresponding $\delta=\delta\left(  \varepsilon,R_{0},\mu\right)
=\delta\left(  p,\mathbb{G},R_{0},\mu\right)  .$

Therefore we can apply Lemma \ref{2.3} to $f=\mathcal{M}_{B_{11R}\left(
\overline{x}\right)  }\left(  \left\vert Xu_{\lambda}\right\vert ^{2}\right)
\left(  x\right)  $ and $m=N_{1}^{2}$ getting%
\[
\left\Vert \mathcal{M}_{B_{11R}\left(  \overline{x}\right)  }\left(
\left\vert Xu_{\lambda}\right\vert ^{2}\right)  \right\Vert _{L^{p/2}\left(
B_{R}\left(  \overline{x}\right)  \right)  }^{p/2}\leq c\left(  1+R^{Q}%
\right)
\]
with $c=c\left(  p,\mathbb{G}\right)  $, which by (\ref{lambda}) implies
\[
\left\Vert \mathcal{M}_{B_{11R}\left(  \overline{x}\right)  }\left(
\left\vert Xu\right\vert ^{2}\right)  \right\Vert _{L^{p/2}\left(
B_{R}\left(  \overline{x}\right)  \right)  }^{1/2}\leq c\left\{  \left\Vert
Xu\right\Vert _{L^{2}\left(  B_{11R}\left(  \overline{x}\right)  \right)
}+\left\Vert \mathbf{F}\right\Vert _{L^{p}\left(  B_{11R}\left(  \overline
{x}\right)  \right)  }\right\}
\]
with $c=$ $c\left(  R,R_{0},p,\mathbb{G}\right)  ,$ and recalling that
$\left\vert f\left(  x\right)  \right\vert \leq\mathcal{M}_{B_{11R}\left(
\overline{x}\right)  }\left(  f\right)  \left(  x\right)  $ for a.e. $x$, we
get%
\[
\left\Vert Xu\right\Vert _{L^{p}\left(  B_{R}\left(  \overline{x}\right)
\right)  }\leq c\left\{  \left\Vert Xu\right\Vert _{L^{2}\left(
B_{11R}\left(  \overline{x}\right)  \right)  }+\left\Vert \mathbf{F}%
\right\Vert _{L^{p}\left(  B_{11R}\left(  \overline{x}\right)  \right)
}\right\}  .
\]
This completes the proof of Theorem \ref{Thm main basic}.
\end{proof}

\bigskip

Maochun Zhu and Pengcheng Niu

Department of Applied Mathematics,

Northwestern Polytechnical University

Xi'an, Shaanxi, 710129, CHINA

zhumaochun2006@126.com

pengchengniu@nwpu.edu.cn

\bigskip

Marco Bramanti

Dipartimento di Matematica,

Politecnico di Milano

Via Bonardi 9. 20133 Milano, ITALY

marco.bramanti@polimi.it

\end{document}